\theoremstyle{plain}
\newtheorem{defn}{Definition}[section]
\newtheorem{Thm}[defn]{Theorem}
\newtheorem{Prop}[defn]{Proposition}
\theoremstyle{remark}
\newtheorem{Rem}[defn]{Remark}
\DeclareMathAlphabet{\mathdj}{U}{msb}{m}{n}
\title{Bilinearised Legendrian contact homology and\\ the augmentation category}
\author{Fr\'ed\'eric Bourgeois \\ Baptiste Chantraine}
\date{}
\begin{document}

\maketitle
\begin{abstract}
  In this paper we construct an $\mathcal{A}_\infty$-category associated to a Legendrian submanifold of jet spaces. Objects of the category are augmentations of the Chekanov algebra $\mathcal{A}(\Lambda)$ and the homology of the morphism spaces forms a new set of invariants of Legendrian submanifolds called the bilinearised Legendrian contact homology. Those are constructed as a generalisation of linearised Legendrian contact homology using two augmentations instead of one. Considering similar constructions with more augmentations leads to the higher order compositions map in the category and generalises the idea of \cite{Productstructure} where an $\mathcal{A}_\infty$-algebra was constructed from one augmentation. This category allows us to define a notion of equivalence of augmentations when the coefficient ring is a field regardless of its characteristic. We use simple examples to show that bilinearised cohomology groups are efficient to distinguish those equivalences classes. We also generalise the duality exact sequence from \cite{Duality_EkholmetAl} in our context, and interpret geometrically the bilinearised homology in terms of the Floer homology of Lagrangian fillings (following \cite{Ekholm_FloerlagCOnt}).
\end{abstract}

\section{Introduction}
\label{sec:introduction}
For a Legendrian submanifold $\Lambda$ of a jet space $\mathcal{J}^1(M)$ Legendrian contact homology is the homology of a non-commutative differential algebra freely generated by double points of the projection of $\Lambda$ to $T^*(M)$ constructed in \cite{Chekanov_DGA_Legendrian} for $M\simeq \mathbb{R}$ and in \cite{Ekholm_Contact_Homology} and \cite{LCHgeneral} in general. We refer to this algebra as the \textit{Chekanov algebra} of $\Lambda$ denoted by $(\mathcal{A}(\Lambda), \partial)$ whose homology is denoted by $LCH(\Lambda)$. The boundary operator counts holomorphic curves in the symplectisation whose domains are disks with points removed on the boundary. At one of these points, the holomorphic curve is required to have one positive asymptotic and at all the others it is required to have a negative asymptotic. When non zero, Legendrian contact homology is most of time infinite dimensional, hence this turns out to be difficult to distinguish two non-isomorphic LCH algebras.

Linearisation of semi-free DGA's is a process used in \cite{Chekanov_DGA_Legendrian} which associates finite dimensional invariants of Legendrian submanifolds from their Chekanov algebras. Linearisation is done using augmentations of $\mathcal{A}(\Lambda)$ that is DGA-maps $\varepsilon:\mathcal{A}(\Lambda)\rightarrow\mathbb{K}$ where $\mathbb{K}$ is the coefficient field we consider (most frequentely $\mathbb{Z}_2$). Those invariants have the advantage to be both computable (once the boundary operator of Legendrian contact homology is known) and efficient to distinguish Legendrian submanifolds which were not distinguished by other invariants. However, the process of linearisation makes the theory commutative. Also, it is not efficient regarding the question of distinguishing augmentations of Chekanov algebra.

In this paper we propose a new invariant called \textit{bilinearised Legendrian contact homology} which closely follows the process of linearisation but uses the fact that the theory is non-commutative in order to use two augmentations instead of one. 

More precisely, for each pairs of augmentations $(\varepsilon_0,\varepsilon_1)$ we define a 
differential 
$$d^{\varepsilon_0,\varepsilon_1}: C(\Lambda)\rightarrow C(\Lambda)$$

in homology, and its dual

$$\mu^1_{\varepsilon_1,\varepsilon_0}: C^*(\Lambda)\rightarrow C^*(\Lambda)$$

in cohomology, where $C(\Lambda)$ is the vector space over $\mathbb{K}$ generated by Reeb chords of $\Lambda$. We denote by $C^{\varepsilon_0,\varepsilon_1}$ the complex $(C(\Lambda),d^{\varepsilon_0,\varepsilon_1})$, and by $C_{\varepsilon_1,\varepsilon_0}$ the complex $(C^*(\Lambda),\mu^1_{\varepsilon_1,\varepsilon_0})$. The homology of those complexes are the bilinearised (co)homology groups, we denote them by $LCH^{\varepsilon_0,\varepsilon_1}(\Lambda)$ and $LCH_{\varepsilon_1,\varepsilon_0}(\Lambda)$ respectively. Those are generalisations of the standard linearised Legendrian contact homology, as it will appear obvious from the definition that the differential $d^{\varepsilon,\varepsilon}$ is the standard augmented differential $d^\varepsilon$ from \cite{Chekanov_DGA_Legendrian} and \cite{Ekholm_Contact_Homology}.

\begin{figure}[ht!]
\labellist
\small\hair 2pt
\pinlabel {$\gamma^+$} [bl] at 160 260
\pinlabel {$\varepsilon_0(\gamma_1^-)$} [tl] at 30 25
\pinlabel {$\varepsilon_0(\gamma_2^-)$} [tl] at 115 25
\pinlabel {$\gamma^-_3$} [tl] at 205 25
\pinlabel {$\varepsilon_1(\gamma^-_4)$} [tl] at 290 25
\endlabellist
  \centering
  \includegraphics[height=7cm]{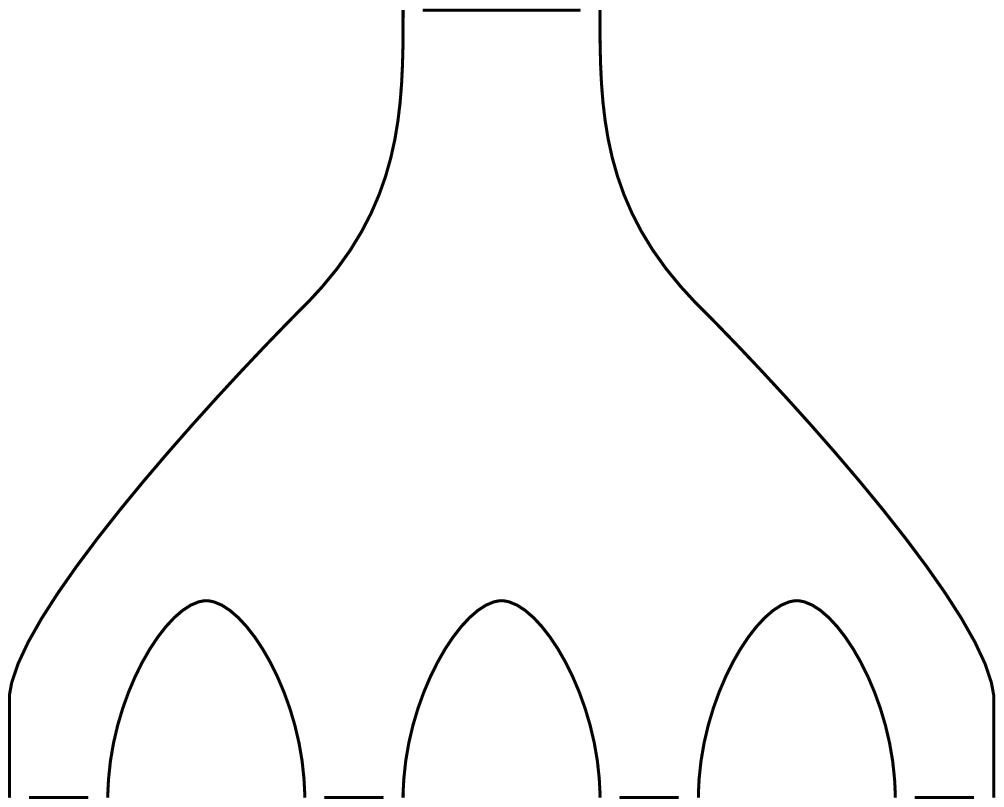}
  \caption{Curve contributing to $d^{\varepsilon_0;\varepsilon_1}(\gamma^+)$ (or  $\mu^1_{\varepsilon_0,\varepsilon_1}(\gamma_3^-)$).}
  \label{fig:diff}
\end{figure}

Our first result is that the set of those homologies is a Legendrian isotopy invariant as stated in the following

\begin{Thm}\label{thm:legisotinv}
  The set of isomorphism classes of $LCH_{\varepsilon_1,\varepsilon_0}(\Lambda)$ (or $LCH^{\varepsilon_0,\varepsilon_1}(\Lambda)$) over all pairs of augmentations $(\varepsilon_0,\varepsilon_1)$ of $\mathcal{A}(\Lambda)$ is a Legendrian isotopy invariant. 
\end{Thm}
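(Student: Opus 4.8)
The plan is to reduce the statement to the algebraic Legendrian-isotopy invariance of the Chekanov algebra. Recall that if $\Lambda_0$ and $\Lambda_1$ are Legendrian isotopic then $(\mathcal{A}(\Lambda_0),\partial_0)$ and $(\mathcal{A}(\Lambda_1),\partial_1)$ are \emph{stable tame isomorphic} (\cite{Chekanov_DGA_Legendrian} and its higher-dimensional analogues): after adjoining to each side finitely many algebraic stabilisation generators one obtains a tame DGA isomorphism $\Phi\colon S\mathcal{A}(\Lambda_0)\to S'\mathcal{A}(\Lambda_1)$. It therefore suffices to prove that the set of isomorphism classes of the groups $LCH^{\varepsilon_0,\varepsilon_1}(\mathcal{A})$, taken over all pairs $(\varepsilon_0,\varepsilon_1)$ of augmentations of a semi-free DGA $\mathcal{A}$, is left unchanged when (i) $\mathcal{A}$ is replaced by a tamely isomorphic DGA, and (ii) $\mathcal{A}$ is replaced by an algebraic stabilisation. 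Since $C_{\varepsilon_1,\varepsilon_0}$ is by construction the complex dual to $C^{\varepsilon_0,\varepsilon_1}$ and dualising takes isomorphisms to isomorphisms, it is enough to treat the homological groups.

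For (i), let $\Phi\colon\mathcal{A}\to\mathcal{A}'$ be a DGA morphism. Precomposition with $\Phi$ sends a pair $(\varepsilon_0',\varepsilon_1')$ of augmentations of $\mathcal{A}'$ to the pair $(\varepsilon_0'\circ\Phi,\varepsilon_1'\circ\Phi)$ of augmentations of $\mathcal{A}$, and this is a bijection on augmentation sets as soon as $\Phi$ is invertible. To this datum I attach a \emph{bilinearised morphism} $\Phi^1_{\varepsilon_0',\varepsilon_1'}\colon C(\mathcal{A})\to C(\mathcal{A}')$, built from $\Phi$ exactly as $d^{\varepsilon_0,\varepsilon_1}$ is built from $\partial$: expand $\Phi(\gamma)$ as a sum of words in the generators of $\mathcal{A}'$ and, in each word, select a single Reeb-chord letter while feeding $\varepsilon_0'$ to the letters to its left and $\varepsilon_1'$ to those to its right. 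A direct expansion of $\Phi\circ\partial=\partial'\circ\Phi$ together with the augmentation relations $\varepsilon_i'\circ\partial'=0$ --- keeping careful track of the $\varepsilon_0'$- and $\varepsilon_1'$-decorations, and absorbing the constant term of $\Phi$ by pre- and post-composing with the augmentation automorphisms, as in the linearised case --- shows that $\Phi^1_{\varepsilon_0',\varepsilon_1'}$ is a chain map $C^{\varepsilon_0'\circ\Phi,\varepsilon_1'\circ\Phi}\to C^{\varepsilon_0',\varepsilon_1'}$. Since bilinearisation is compatible with composition and each elementary tame automorphism, relabelling and rescaling of generators has an invertible bilinearised part, $\Phi^1_{\varepsilon_0',\varepsilon_1'}$ is an isomorphism of complexes when $\Phi$ is a tame isomorphism; hence $LCH^{\varepsilon_0'\circ\Phi,\varepsilon_1'\circ\Phi}(\mathcal{A})\cong LCH^{\varepsilon_0',\varepsilon_1'}(\mathcal{A}')$. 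Together with the bijection on augmentations, this gives the desired equality of sets of isomorphism classes.

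For (ii), an algebraic stabilisation $S\mathcal{A}$ adjoins two generators $a,b$ with $\partial a=b$, $\partial b=0$ and leaves $\partial$ on the generators of $\mathcal{A}$ unchanged. An augmentation of $S\mathcal{A}$ kills $b$ (a boundary) and restricts to an augmentation of $\mathcal{A}$; its value on $a$ is forced to vanish when $a$ has nonzero degree, while when $a$ has degree $0$ (in particular in the ungraded case) the augmentations of $S\mathcal{A}$ with a prescribed restriction differ only by the DGA automorphisms $a\mapsto a+c$, $c\in\mathbb{K}$, and hence, by (i), give isomorphic bilinearised homology. So passing to pairs of augmentations of $S\mathcal{A}$ does not enlarge the set of isomorphism classes. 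Moreover, because $\partial$ is unchanged on $\mathcal{A}$, the bilinearised differential of $S\mathcal{A}$ maps $C(\mathcal{A})$ into itself and satisfies $d^{\varepsilon_0,\varepsilon_1}a=b$ and $d^{\varepsilon_0,\varepsilon_1}b=0$, so that $C^{\varepsilon_0,\varepsilon_1}(S\mathcal{A})=C^{\varepsilon_0,\varepsilon_1}(\mathcal{A})\oplus\bigl(\mathbb{K}\langle a\rangle\xrightarrow{\ \sim\ }\mathbb{K}\langle b\rangle\bigr)$ as complexes; the second summand is acyclic, whence $LCH^{\varepsilon_0,\varepsilon_1}(S\mathcal{A})\cong LCH^{\varepsilon_0,\varepsilon_1}(\mathcal{A})$. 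This establishes (ii), and combining (i) and (ii) with stable tame invariance of the Chekanov algebra yields the theorem.

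The step I expect to require the most care is the bookkeeping in (i): checking that the bilinearised length-one part of a DGA morphism genuinely intertwines the two two-augmentation-twisted differentials, and that bilinearisation is compatible with composition, so that invertibility of the linear part of a tame isomorphism propagates to an isomorphism of the bilinearised complexes. This is a finite, purely algebraic verification; all the analytic content (moduli of holomorphic disks, compactness, gluing) is confined to the stable tame invariance of $(\mathcal{A}(\Lambda),\partial)$, which we take as an input.
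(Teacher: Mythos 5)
Your proposal is correct, and the two checks it reduces to --- invariance under DGA isomorphism and under algebraic stabilisation, with the geometric input confined to stable tame isomorphism of the Chekanov algebra --- are exactly the two cases treated in the paper's Theorem \ref{thm:quasiisomostabletame}. The difference is one of packaging: the paper deduces Theorem \ref{thm:legisotinv} as a corollary of the stronger Theorem \ref{thm:quasiequilegisot}, i.e.\ it upgrades the DGA morphism $\Phi$ to a full $\mathcal{A}_\infty$-functor $\{F^d\}$ between augmentation categories (Proposition \ref{thm:constr-funct-text}), handles stabilisation via the inclusion/projection pair $\mathcal{I},\mathcal{J}$ and a cohomological adjunction with $T^1=\mathrm{Id}$, and then quotes Remark \ref{sec:comp-quasi-equiv-1} that pseudo-equivalences preserve the isomorphism types of the morphism homologies. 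You keep only the $d=1$ component of that functor and verify directly that it is a chain map between the two-augmentation-twisted complexes; your stabilisation step replaces the adjunction argument by the even more elementary observation that $C^{\varepsilon_0,\varepsilon_1}(S\mathcal{A})$ splits off an acyclic two-term summand. What your route buys is a shorter, self-contained proof of precisely the stated theorem; what it gives up is the categorical statement (pseudo-equivalence of $\mathit{Aug}$), which the paper needs anyway for Theorems \ref{thm:equiclassisot} and \ref{thm:equivaugisom}. Two minor remarks: tameness is not actually needed in step (i), since the composition formula for the linear parts ($(\Phi\circ\Psi)_1=\Phi_1\circ\Psi_1$, after conjugating by the augmentation automorphisms) already forces the linear part of any DGA isomorphism to be invertible --- this matters because the paper only assumes \emph{stable isomorphism}; and in step (ii) your direct-sum decomposition already covers augmentations with arbitrary value on the degree-$0$ stabilisation generator, so the discussion of the automorphisms $a\mapsto a+c$ is redundant.
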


Geometrically, bilinearised Legendrian contact homology associated to two augmentations $\varepsilon_0$ and $\varepsilon_1$ amounts to counting the holomorphic curves decorated with the augmentations. An example of such a decorated curves is shown in Figure \ref{fig:diff} note that the number of negative asymptotics on the left and on the right could be arbitrary (see Equation \eqref{eq:1} for an explicit algebraic definition).

We can use similar ideas and consider holomorphic curves decorated by $d+1 \ge 3$ augmentations to build structural maps $\mu^d_{\varepsilon_d,\ldots ,\varepsilon_0} : C_{\varepsilon_d,\varepsilon_{d-1}} \otimes \ldots \otimes C_{\varepsilon_1,\varepsilon_0}\rightarrow C_{\varepsilon_d,\varepsilon_0}$. Figure \ref{fig:comp} gives an example of such a curves, again the number of decorated negative ends is in general arbitrary, Equation \eqref{eq:9} gives the explicit formula of this operation.

Those structural maps satisfy an $\mathcal{A}_\infty$-relation which allows us to define an $\mathcal{A}_\infty$-category whose objects are augmentations of the Chekanov algebra of $\Lambda$. We call this category the \textit{augmentation category} of $\Lambda$. We denote it by $\mathit{Aug}(\Lambda)$. This is a direct generalisation of the $\mathcal{A}_\infty$-algebra constructed in \cite{Productstructure}. Under Legendrian isotopy this category changes by a pseudo-equivalence as stated in the following

\begin{Thm}
  \label{thm:quasiequilegisot}
If $\Lambda$ is Legendrian isotopic to $\Lambda'$ then the categories $\mathit{Aug}(\Lambda)$ and $\mathit{Aug}(\Lambda')$ are pseudo-equivalent.
\end{Thm}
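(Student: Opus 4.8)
The plan is to argue purely algebraically, exploiting that the whole package --- the spaces $C^{\varepsilon_0,\varepsilon_1}$, the bilinearised differentials $d^{\varepsilon_0,\varepsilon_1}$ and the higher products $\mu^{d}$ --- is manufactured from the Chekanov DGA $(\mathcal{A}(\Lambda),\partial)$ together with its augmentations by one fixed combinatorial recipe. By the invariance theorem for Legendrian contact homology \cite{Chekanov_DGA_Legendrian,LCHgeneral}, a Legendrian isotopy from $\Lambda$ to $\Lambda'$ makes $\mathcal{A}(\Lambda)$ and $\mathcal{A}(\Lambda')$ \emph{stably tamely isomorphic}: after adjoining finitely many algebraic stabilisations to each, the resulting DGAs are related by a tame isomorphism. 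So it suffices to prove two functoriality statements. \emph{(A)} Every DGA morphism $\phi\colon\mathcal{B}\to\mathcal{B}'$ induces an $\mathcal{A}_\infty$-functor $\phi^{*}\colon\mathit{Aug}(\mathcal{B}')\to\mathit{Aug}(\mathcal{B})$, which is an $\mathcal{A}_\infty$-isomorphism --- hence a pseudo-equivalence --- when $\phi$ is a tame isomorphism. \emph{(B)} For an algebraic stabilisation $\iota\colon\mathcal{B}\hookrightarrow S\mathcal{B}$, the induced functor $\iota^{*}\colon\mathit{Aug}(S\mathcal{B})\to\mathit{Aug}(\mathcal{B})$ is a pseudo-equivalence. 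Granting (A) and (B), chaining $\mathit{Aug}(\Lambda)\leftarrow\mathit{Aug}(S\mathcal{A}(\Lambda))\leftarrow\mathit{Aug}(S'\mathcal{A}(\Lambda'))\rightarrow\mathit{Aug}(\Lambda')$ (the outer two arrows by (B), the middle one by (A) applied to the tame isomorphism) and using that pseudo-equivalence is an equivalence relation on $\mathcal{A}_\infty$-categories yields the theorem.

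\emph{On (A).} On objects, $\phi^{*}$ sends an augmentation $\varepsilon'$ of $\mathcal{B}'$ to $\varepsilon'\circ\phi$. The higher components are obtained by ``bilinearising $\phi$'' exactly as $\partial$ is bilinearised to define $d^{\varepsilon_0,\varepsilon_1}$: writing, for a generator $a$, the element $\phi(a)$ as a $\mathbb{K}$-combination of words in the generators of $\mathcal{B}'$, the words containing exactly $k$ distinguished letters $b_1,\dots,b_k$ with the $k+1$ intervening blocks evaluated successively by $\varepsilon'_0,\dots,\varepsilon'_k$ contribute, after dualising, the component $(\phi^{*})^{k}\colon C_{\varepsilon'_k,\varepsilon'_{k-1}}\otimes\cdots\otimes C_{\varepsilon'_1,\varepsilon'_0}\to C_{\varepsilon'_k\phi,\varepsilon'_0\phi}$. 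Expanding the identity $\phi\circ\partial=\partial'\circ\phi$ according to where the positive puncture and the decorating blocks sit produces precisely the $\mathcal{A}_\infty$-functor equations relating the $(\phi^{*})^{k}$ to the $\mu^{d}$ of source and target --- the same bookkeeping, with the same signs, as in the verification of the $\mathcal{A}_\infty$-relations for the $\mu^{d}$, which I would not reproduce. When $\phi$ is a tame isomorphism, $\mathcal{B}$ and $\mathcal{B}'$ have the same generators, $\phi^{*}$ is a bijection on objects, and the part of $\phi$ linear in the generators is unipotent, so $(\phi^{*})^{1}$ is an isomorphism of every morphism complex; an $\mathcal{A}_\infty$-functor with invertible linear term is an $\mathcal{A}_\infty$-isomorphism.

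\emph{On (B).} Take the stabilisation in normalised form, $S\mathcal{B}=\mathcal{B}\langle e_1,e_2\rangle$ with $\partial e_1=e_2$, $\partial e_2=0$ and $\partial$ unchanged on the old generators (a preliminary tame change of coordinates, handled by (A), reduces to this). Since $\iota$ is the identity on old generators, $\iota^{*}$ is surjective on objects: every $\varepsilon\in\mathit{Aug}(\mathcal{B})$ is the restriction of $\varepsilon\circ\rho$, $\rho$ being the retraction killing $e_1,e_2$. Moreover, because bilinearising $\partial$ on the old generators creates no $e_i$ and the bilinearisation of $\partial e_1=e_2$ is again $e_2$, for every pair of augmentations of $S\mathcal{B}$ the complex $(C(S\mathcal{B}),d^{\tilde\varepsilon_0,\tilde\varepsilon_1})$ splits as $(C(\mathcal{B}),d^{\varepsilon_0,\varepsilon_1})\oplus\langle e_1,e_2\rangle$ with the second summand acyclic; hence $(\iota^{*})^{1}$ is a quasi-isomorphism on every morphism complex. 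Surjectivity on objects together with quasi-isomorphisms on morphisms makes $\iota^{*}$ a pseudo-equivalence.

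\emph{Main obstacle.} The analytic content --- invariance of $LCH$, i.e.\ the existence of the stable tame isomorphism --- is quoted, so the real work is combinatorial and concentrated in (A): checking that $\phi\circ\partial=\partial'\circ\phi$ unwinds \emph{exactly} into the $\mathcal{A}_\infty$-functor relations, and carrying the sign and orientation conventions through that expansion without error. One must also ensure that the assignment $\phi\mapsto\phi^{*}$ and the notion of pseudo-equivalence are well enough behaved to be chained (composability, and compatibility with whatever unitality $\mathit{Aug}$ enjoys); these are the delicate points, even if no new idea is required.
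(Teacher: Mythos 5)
Your proposal is correct and follows essentially the same route as the paper: reduce via the stable tame isomorphism of Chekanov algebras to Theorem \ref{thm:quasiisomostabletame}, using the contravariant functor $\mathit{Aug}(\cdot)$ induced by DGA morphisms (Proposition \ref{thm:constr-funct-text}), its compatibility with composition, and the fact that pseudo-equivalence is an equivalence relation (Proposition \ref{prop:cohom-adjunct}), then treating isomorphisms and stabilisations separately. The only cosmetic difference is that for the stabilisation step the paper exhibits the explicit cohomological adjunction $T^1=\mathrm{Id}$ between $\mathcal{I}$ and $\mathcal{J}$ via Proposition \ref{sec:cohom-adjunctprop}, while you verify the definition of pseudo-equivalence directly (surjectivity on objects plus quasi-isomorphism on morphism complexes); both amount to the same observation that the new generators span an acyclic direct summand.
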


\begin{figure}[ht!]
\labellist
\small\hair 2pt
\pinlabel {$\gamma^+$} [bl] at 165 260
\pinlabel {$\varepsilon_0(\gamma_1^-)$} [tl] at 1 25
\pinlabel {$\gamma^-_2$} [tl] at 85 25
\pinlabel {$\varepsilon_1(\gamma^-_3)$} [tl] at 175 25
\pinlabel {$\gamma_4^-$} [tl] at 260 25
\pinlabel {$\varepsilon_2(\gamma_5^-)$} [tl] at 345 25
\endlabellist
  \centering
  \includegraphics[height=7cm]{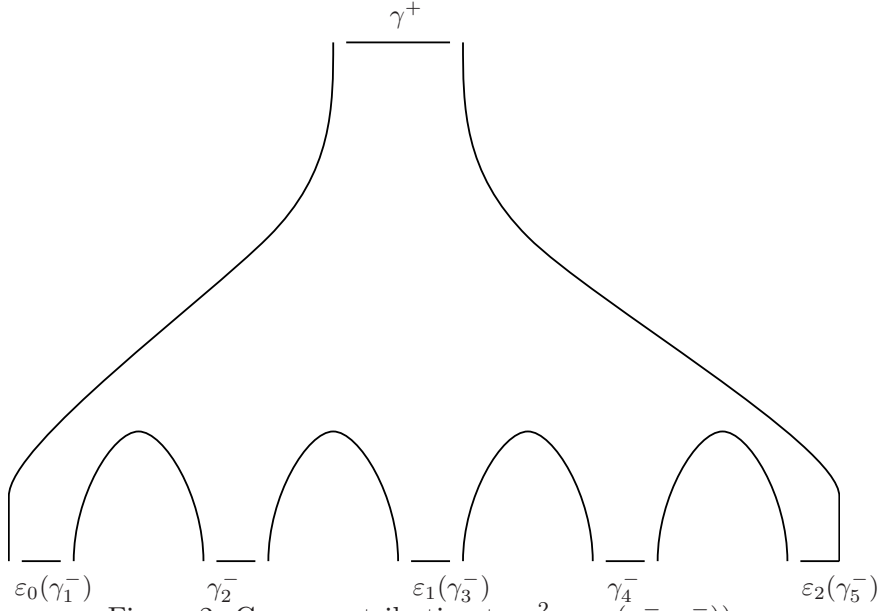}
  \caption{Curve contributing to $\mu^2_{\varepsilon_0,\varepsilon_1,\varepsilon_2}(\gamma_2^-,\gamma^-_4)$).}
  \label{fig:comp}
\end{figure}

These categories are not (in general) cohomologically unital (in the sense of \cite{Seidel_Fukaya}). Therefore one cannot take the definition of quasi-equivalence as in \cite{Seidel_Fukaya}. In Section \ref{sec:equiv-categ} we introduce the notion of pseudo-equivalence which will coincide with the notion of quasi-equivalence in the case of $c$-unital categories. We borrow many notations from \cite{Seidel_Fukaya} and try to give the necessary definitions to understand the main concepts in the present paper. As it will follow from this section that the morphism spaces in the corresponding homological categories are invariant under quasi-equivalences, Theorem \ref{thm:legisotinv} is a corollary of Theorem \ref{thm:quasiequilegisot}.

The benefit of having a category where the objects are augmentations of $\mathcal{A}(\Lambda)$ is that it allows to define the notion of equivalence of augmentations even when the ground field $\mathbb{K}$ is not of characteristic $0$ (compare with the definition in \cite{Bourgeois_Survey}). After precisely defining equivalence of augmentations in Section \ref{sec:equiv-augm} we prove that

\begin{Thm}\label{thm:equiclassisot}
 If $\Lambda$ is Legendrian isotopic to $\Lambda'$ then the quasi-equivalence of Theorem \ref{thm:quasiequilegisot} induces a bijection between the equivalence classes of augmentation of $\mathcal{A}(\Lambda)$ and those of $\mathcal{A}(\Lambda')$.
\end{Thm}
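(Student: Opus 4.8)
I would deduce the statement formally from Theorem~\ref{thm:quasiequilegisot}, from the material on pseudo-equivalences in Section~\ref{sec:equiv-categ}, and from the definition of equivalence of augmentations in Section~\ref{sec:equiv-augm}. Let $\mathcal{F}\colon\mathit{Aug}(\Lambda)\to\mathit{Aug}(\Lambda')$ be the pseudo-equivalence provided by Theorem~\ref{thm:quasiequilegisot}. To an $\mathcal{A}_\infty$-category $\mathcal{C}$ one associates an honest, in general non-unital, category $H(\mathcal{C})$ with the same objects, with degree-zero morphism spaces the corresponding cohomologies, and with composition induced by $\mu^2$; for $\mathcal{C}=\mathit{Aug}(\Lambda)$ the morphism spaces are the bilinearised cohomology groups $LCH_{\varepsilon_1,\varepsilon_0}(\Lambda)$, and, the linear part $\mathcal{F}^1$ being a chain map compatible with $\mu^2$ up to the homotopy $\mathcal{F}^2$, the functor $\mathcal{F}$ induces a functor $H(\mathcal{F})\colon H(\mathit{Aug}(\Lambda))\to H(\mathit{Aug}(\Lambda'))$. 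By the definition adopted in Section~\ref{sec:equiv-augm}, two augmentations are equivalent exactly when they are isomorphic as objects of this cohomology category, so the theorem reduces to the assertion that $H(\mathcal{F})$ is a bijection on isomorphism classes of objects.

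The plan is to show that $H(\mathcal{F})$ is an equivalence of (possibly non-unital) categories, which yields the bijection at once. Two ingredients enter, both built into the notion of pseudo-equivalence from Section~\ref{sec:equiv-categ}. First, $\mathcal{F}^1$ is a quasi-isomorphism on all morphism complexes, so that $H(\mathcal{F})$ is fully faithful; concretely this is the statement --- already underlying Theorems~\ref{thm:legisotinv} and~\ref{thm:quasiequilegisot} --- that $\mathcal{F}^1$ induces isomorphisms $LCH_{\varepsilon_1,\varepsilon_0}(\Lambda)\cong LCH_{\mathcal{F}(\varepsilon_1),\mathcal{F}(\varepsilon_0)}(\Lambda')$. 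Second, $\mathcal{F}$ is essentially surjective, so $H(\mathcal{F})$ is too; here one must remember that a Legendrian isotopy may only induce a stable tame isomorphism of Chekanov algebras, so that the object map of $\mathcal{F}$ need not be surjective on the nose --- this is harmless because stabilisation of a semi-free DGA leaves the set of augmentations unchanged up to equivalence.

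Granting these two points I would finish as follows. Essential surjectivity gives surjectivity on isomorphism classes of objects. For injectivity, assume $\mathcal{F}(\varepsilon_0)\cong\mathcal{F}(\varepsilon_1)$ in $H(\mathit{Aug}(\Lambda'))$, witnessed by classes $\alpha\in LCH_{\mathcal{F}(\varepsilon_1),\mathcal{F}(\varepsilon_0)}(\Lambda')$ and $\beta\in LCH_{\mathcal{F}(\varepsilon_0),\mathcal{F}(\varepsilon_1)}(\Lambda')$. Fullness lets us write $\alpha=H(\mathcal{F})(a)$ and $\beta=H(\mathcal{F})(b)$ for classes $a,b$ over $\Lambda$, and faithfulness shows that $[\mu^2(a,b)]$ and $[\mu^2(b,a)]$ are the unique preimages of the classes $[\mu^2(\alpha,\beta)]$ and $[\mu^2(\beta,\alpha)]$ realising the isomorphism $\mathcal{F}(\varepsilon_0)\cong\mathcal{F}(\varepsilon_1)$; as a fully faithful functor reflects the property of being an isomorphism of objects, $(a,b)$ realises $\varepsilon_0\cong\varepsilon_1$, i.e. $\varepsilon_0\sim\varepsilon_1$.

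The main obstacle, and the reason this cannot be dispatched in one line, is that $\mathit{Aug}(\Lambda)$ is typically not cohomologically unital: one may not invoke the standard fact (see \cite{Seidel_Fukaya}) that a quasi-equivalence of $\mathcal{A}_\infty$-categories induces an equivalence of homotopy categories, and even the classical implication ``fully faithful and essentially surjective $\Rightarrow$ equivalence'' must be handled with care, since ``isomorphism of objects'' has to be given a meaning in a non-unital category and a fully faithful functor must be checked to both preserve and reflect it. All of this is exactly what the treatment of pseudo-equivalences in Section~\ref{sec:equiv-categ} is designed to supply; granting it, the present theorem requires no holomorphic-curve analysis beyond the full faithfulness of $\mathcal{F}^1$ that already enters Theorems~\ref{thm:legisotinv} and~\ref{thm:quasiequilegisot}.
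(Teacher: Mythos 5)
Your reduction rests on the claim that, with the definition adopted in Section~\ref{sec:equiv-augm}, two augmentations are equivalent exactly when they are isomorphic as objects of the homological category $H(\mathit{Aug}(\Lambda))$. That is not the definition in the paper, and it cannot be: since $\mathit{Aug}(\Lambda)$ is in general not cohomologically unital, $H(\mathit{Aug}(\Lambda))$ has no identity morphisms, so ``isomorphism of objects'' has no meaning there --- this is precisely the difficulty that Section~\ref{sec:equiv-categ} is built to circumvent. Equivalence of augmentations is instead defined as the existence of an $\mathcal{A}_\infty$-functor $\mathcal{F}$ with $\varepsilon_1=F(\varepsilon_2)$ together with a cohomological adjunction from $\mathcal{F}$ to the identity (data living at the level of Yoneda modules and pre-natural transformations), not as the existence of morphism classes $a,b$ with $\mu^2(a,b)$ and $\mu^2(b,a)$ equal to identities. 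Consequently your injectivity step --- ``a fully faithful functor reflects the property of being an isomorphism of objects, hence $(a,b)$ realises $\varepsilon_0\cong\varepsilon_1$'' --- has no content here: there are no identities for $[\mu^2(a,b)]$ to be compared with. The machinery of Section~\ref{sec:equiv-categ} supplies pseudo-isomorphism and cohomological adjunctions, not a notion of isomorphism in $H(\mathit{Aug})$, so it does not close this gap.

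The paper's actual argument works at the level of the DGA and transports the functor-plus-adjunction data directly. A Legendrian isotopy gives a stable tame isomorphism of Chekanov algebras, and the unnumbered theorem at the end of Section~\ref{sec:invar-under-homot} treats the two elementary moves separately: for a DGA isomorphism $f$, composing the witnessing functor and adjunction with the functor induced by $f^{-1}$ shows that $\varepsilon_1\sim\varepsilon_2$ iff $\varepsilon_1\circ f\sim\varepsilon_2\circ f$; for a stabilisation, the explicit change of variables $f(b)=b-\varepsilon'(b)$ with $T^1=Id$ shows that every augmentation of the stabilised algebra is equivalent to one pulled back from $\mathcal{A}$, and that restriction preserves and reflects equivalence. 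The bijection on equivalence classes is assembled from these two statements. Your remark that stabilisation leaves the augmentations unchanged up to equivalence is indeed the right ingredient for surjectivity, but the reduction of the whole theorem to ``$H(\mathcal{F})$ is a bijection on isomorphism classes of objects'' is where the argument breaks, and it cannot be repaired without either adding units or reverting to the paper's notion of equivalence.
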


In particular if the characteristic of $\mathbb{K}$ is finite one gets that the (finite) number of equivalence classes is a Legendrian invariant.

The group $LCH^{\varepsilon_0,\varepsilon_1}$ appears to be an efficient tool to distinguish some of those equivalence classes of augmentations using the following theorem.

\begin{Thm}\label{thm:equivaugisom}
  If $\varepsilon_1$ and $\varepsilon_2$ are equivalent then for all augmentation $\varepsilon$
$$LCH^{\varepsilon_1,\varepsilon}(\Lambda)\simeq LCH^{\varepsilon_2,\varepsilon}(\Lambda)$$
and
$$LCH^{\varepsilon,\varepsilon_1}(\Lambda)\simeq LCH^{\varepsilon,\varepsilon_2}(\Lambda).$$
\end{Thm}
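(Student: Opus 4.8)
The plan is to derive Theorem~\ref{thm:equivaugisom} from three inputs: the $\mathbb{K}$-linear duality built into the bilinearised complexes, the fact that $\mu^2$ descends to an associative product on bilinearised cohomology, and the definition of equivalence of augmentations from Section~\ref{sec:equiv-augm}. In spirit it is the $\mathcal{A}_\infty$-incarnation of the trivial observation that isomorphic objects of a category have naturally isomorphic morphism spaces into, and out of, every other object.

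First I would reduce the cohomological statements to homological ones. By construction the differential $\mu^1_{\varepsilon_1,\varepsilon_0}$ on $C_{\varepsilon_1,\varepsilon_0}=C^*(\Lambda)$ is the transpose of the differential $d^{\varepsilon_0,\varepsilon_1}$ on $C^{\varepsilon_0,\varepsilon_1}=C(\Lambda)$; since $\mathbb{K}$ is a field this gives a natural graded isomorphism $LCH^{\varepsilon_0,\varepsilon_1}(\Lambda)\cong\big(LCH_{\varepsilon_1,\varepsilon_0}(\Lambda)\big)^{*}$. Hence it suffices, for each augmentation $\varepsilon$, to produce graded isomorphisms $LCH_{\varepsilon_1,\varepsilon}(\Lambda)\cong LCH_{\varepsilon_2,\varepsilon}(\Lambda)$ and $LCH_{\varepsilon,\varepsilon_1}(\Lambda)\cong LCH_{\varepsilon,\varepsilon_2}(\Lambda)$: dualising these yields respectively $LCH^{\varepsilon,\varepsilon_1}(\Lambda)\cong LCH^{\varepsilon,\varepsilon_2}(\Lambda)$ and $LCH^{\varepsilon_1,\varepsilon}(\Lambda)\cong LCH^{\varepsilon_2,\varepsilon}(\Lambda)$, the two assertions of the theorem. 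Under the identification $\mathrm{Hom}_{\mathit{Aug}(\Lambda)}(\varepsilon_a,\varepsilon_b)=C_{\varepsilon_b,\varepsilon_a}$, the groups above are the morphism spaces of the homological category $H(\mathit{Aug}(\Lambda))$, so I am reduced to showing that an isomorphism between $\varepsilon_1$ and $\varepsilon_2$ in $H(\mathit{Aug}(\Lambda))$ induces isomorphisms on Hom-spaces to and from any third object $\varepsilon$.

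Now I would invoke the $\mathcal{A}_\infty$-relations of Section~\ref{sec:equiv-categ}: $\mu^1$ is a derivation for $\mu^2$, so $\mu^2$ induces a product on cohomology, and the $\mu^3$-relation shows this product is associative; thus composition with a fixed closed class is a well-defined operation on bilinearised cohomology. By the definition of equivalence, $\varepsilon_1\sim\varepsilon_2$ furnishes degree-$0$ classes $[a]\in LCH_{\varepsilon_2,\varepsilon_1}(\Lambda)$ and $[b]\in LCH_{\varepsilon_1,\varepsilon_2}(\Lambda)$ whose $\mu^2$-products $\mu^2([b],[a])$ and $\mu^2([a],[b])$ act as the (cohomological) identities of $\varepsilon_1$ and $\varepsilon_2$ on the relevant Hom-spaces. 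Fixing $\varepsilon$, post-composition with these classes, $\mu^2([a],-)\colon LCH_{\varepsilon_1,\varepsilon}(\Lambda)\to LCH_{\varepsilon_2,\varepsilon}(\Lambda)$ and $\mu^2([b],-)$, are mutually inverse by associativity together with the identity property; likewise pre-composition, $\mu^2(-,[b])\colon LCH_{\varepsilon,\varepsilon_1}(\Lambda)\to LCH_{\varepsilon,\varepsilon_2}(\Lambda)$ and $\mu^2(-,[a])$, are mutually inverse. Both are graded isomorphisms because $[a]$ and $[b]$ have degree $0$. Dualising these two isomorphisms via the identification of the previous paragraph completes the proof.

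The argument is formal once everything is in place; the one genuine subtlety is that $\mathit{Aug}(\Lambda)$ is not $c$-unital, so the phrase ``$\mu^2([b],[a])$ is the identity of $\varepsilon_1$'' cannot be taken at face value and must be read in the precise, Morita-type sense given to equivalence in Section~\ref{sec:equiv-augm} --- namely that composition with $[a]$ and $[b]$ realises mutually inverse maps between the relevant morphism spaces of $H(\mathit{Aug}(\Lambda))$. Unwinding that definition and checking that it supplies exactly the invertibility statements used above is where care is required. (If one prefers, the passage through linear duality can be avoided by transposing the operations $\mu^d$ to obtain $\mathcal{A}_\infty$-operations directly on the complexes $C^{\bullet,\bullet}$ and arguing there; I have chosen the route through $\mathit{Aug}(\Lambda)$ since its $\mathcal{A}_\infty$-formalism is already available.)
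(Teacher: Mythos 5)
Your opening reduction is fine and matches what the paper does implicitly: over a field $\mu^1_{\varepsilon_1,\varepsilon_0}$ is the transpose of $d^{\varepsilon_0,\varepsilon_1}$, so the homology groups in the statement are the duals of the morphism spaces of $H(\mathit{Aug}(\Lambda))$, and it suffices to produce isomorphisms $H(C_{\varepsilon_1,\varepsilon})\simeq H(C_{\varepsilon_2,\varepsilon})$ and $H(C_{\varepsilon,\varepsilon_1})\simeq H(C_{\varepsilon,\varepsilon_2})$. That is exactly Theorem \ref{thm:equivaug}, from which the paper deduces the present statement.

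The gap is in how you extract those isomorphisms from the hypothesis. The definition of equivalence in Section \ref{sec:equiv-augm} is not that there exist degree-zero classes $[a]\in H(C_{\varepsilon_2,\varepsilon_1})$ and $[b]\in H(C_{\varepsilon_1,\varepsilon_2})$ such that $\mu^2([a],-)$, $\mu^2([b],-)$ and their pre-composition analogues are mutually inverse; it is that there is an $\mathcal{A}_\infty$-functor $\mathcal{F}$ with $\varepsilon_1=F(\varepsilon_2)$ together with a homological adjunction $T$ from $\mathcal{F}$ to the identity (Definition \ref{def:adjunction}). In the non-$c$-unital setting the classes $[a]$ and $[b]$ you need may simply not exist: the isomorphisms are induced by the component $T^1_{\varepsilon_2,\varepsilon}\colon C_{\varepsilon_2,\varepsilon}\to C_{\varepsilon_1,\varepsilon}$ of the adjunction, a quasi-isomorphism of right Yoneda modules in the sense of Definition \ref{rem:yoneda_isom}, and on the other side by $U^1\circ T^1$ as in Remark \ref{sec:cohom-adjunctremleftright}; these are general chain maps, not compositions $\mu^2(a,-)$ with a fixed cocycle. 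The paper itself notes, right after Proposition \ref{sec:cohom-adjunctprop}, that only in the presence of a $c$-unit can one evaluate $T^1$ on the unit to recover such a class --- which is precisely the step your argument silently assumes when it reads the definition as ``composition with $[a]$ and $[b]$ realises mutually inverse maps.'' So the point you flag as the ``one genuine subtlety'' is in fact where the proof lives; once you replace composition with $[a]$ and $[b]$ by the chain maps $T^1$ and $U^1\circ T^1$ supplied by the adjunction, the associativity and unit bookkeeping becomes unnecessary and the theorem follows directly from Definition \ref{rem:yoneda_isom} and Remark \ref{sec:cohom-adjunctremleftright}, which is the paper's (one-line) argument.
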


The first part of the paper (Section \ref{sec:algebraic-setup}) is devoted to the algebraic construction of the augmentation category for any semi-free differential graded algebra, we borrow the necessary definitions regarding $\mathcal{A}_{\infty}$-categories from \cite{Seidel_Fukaya} and try to make this paper as self-contained as possible. In the second part, we apply this algebraic construction to the case of the Chekanov algebra of a Legendrian submanifold. We also give a geometrical interpretation of the bilinearised differential in terms of the Chekanov algebra of the $2$-copy Legendrian link. We also investigate in Section \ref{sec:prod-struct-adn} the generalisation of the duality exact sequence from \cite{Duality_EkholmetAl}. Where we prove the following theorem.
\begin{Thm}\label{thm:duality}
  Let $\varepsilon_0$ and $\varepsilon_1$ be two augmentations of $\Lambda$ where $\Lambda\subset \mathcal{J}^1(M)$ is horizontally displaceable. Then there is a long exact sequence
  \begin{equation}
    \label{eq:17}
    \cdots\rightarrow H_{k+1}(\Lambda)\stackrel{\sigma^{\varepsilon_0,\varepsilon_1}}{\longrightarrow} LCH^{n-k-1}_{\varepsilon_1,\varepsilon_0}(\Lambda)\rightarrow LCH_{k}^{\varepsilon_0,\varepsilon_1}(\Lambda)\stackrel{\rho^{\varepsilon_0,\varepsilon_1}}{\longrightarrow} H_k(\Lambda)\rightarrow\cdots
  \end{equation}
\end{Thm}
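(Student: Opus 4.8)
The plan is to adapt the argument of \cite{Duality_EkholmetAl} by carefully keeping track of the two augmentations $\varepsilon_0,\varepsilon_1$ wherever a single augmentation appeared there. The geometric input is the moduli space of holomorphic disks with two positive punctures (rather than one), for the Legendrian $\Lambda$ and its Morse--Bott two-copy $\Lambda \sqcup \Lambda^\eta$ where $\Lambda^\eta$ is a small pushoff; since $\Lambda$ is horizontally displaceable, the Chekanov algebra of the displaced copy has the trivial differential on the Morse chords, so the only curves that survive are those whose two positive asymptotics limit to a Reeb chord of $\Lambda$ and to a Morse-type chord between the copies. First I would set up, following the conventions already fixed in Section \ref{sec:algebraic-setup} and the bilinearised differential of Equation \eqref{eq:1}, the chain-level map $\sigma^{\varepsilon_0,\varepsilon_1}$ that counts rigid such two-positive-puncture disks with all negative asymptotics decorated by $\varepsilon_0$ on one side of the (cyclically ordered) boundary and by $\varepsilon_1$ on the other side; the count lands in $C^*(\Lambda)$ and one checks it is a chain map from the Morse complex of $\Lambda$ (computing $H_*(\Lambda)$) to $(C^*(\Lambda),\mu^1_{\varepsilon_1,\varepsilon_0})$ after a degree shift $n-k-1$. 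The map $\rho^{\varepsilon_0,\varepsilon_1}$ is the dual/adjoint count, obtained by the same moduli spaces read with the roles of input and output interchanged, and lands in $H_k(\Lambda)$.

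The core of the proof is then the identification of the mapping cone. Concretely I would show that the complex
\begin{equation*}
\big( C^*(\Lambda)\oplus C_{\mathrm{Morse}}(\Lambda)\oplus C(\Lambda),\ D \big),
\end{equation*}
where $D$ is built from $\mu^1_{\varepsilon_1,\varepsilon_0}$, $d^{\varepsilon_0,\varepsilon_1}$, $\sigma^{\varepsilon_0,\varepsilon_1}$ and $\rho^{\varepsilon_0,\varepsilon_1}$ as off-diagonal terms, is the bilinearised Legendrian contact complex of the two-copy link $\Lambda\sqcup\Lambda^\eta$ associated to the pair of augmentations $(\varepsilon_0,\varepsilon_1)$ on the two components — and that, because $\Lambda$ is horizontally displaceable, this two-copy complex is acyclic. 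Acyclicity follows exactly as in \cite{Duality_EkholmetAl}: a horizontal displacement gives a Legendrian isotopy of the two-copy to a split link whose mixed Reeb chords can all be cancelled, so the bilinearised homology of the two-copy vanishes; here I need the bilinearised invariance statement, which is Theorem \ref{thm:legisotinv} (equivalently Theorem \ref{thm:quasiequilegisot}), applied to the two-copy link. Once the total complex is acyclic, the long exact sequence of the iterated cone unwinds into exactly \eqref{eq:17}, and one reads off that the connecting maps are $\sigma^{\varepsilon_0,\varepsilon_1}$ and $\rho^{\varepsilon_0,\varepsilon_1}$.

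The step I expect to be the main obstacle is the \emph{broken-disk analysis} establishing $D^2=0$ for the two-copy complex and, within it, pinning down the precise degree shift and the interplay of the two augmentations at the two positive punctures. The boundary of a one-dimensional moduli space of two-positive-puncture disks degenerates either by splitting off a one-positive-puncture disk at one of the two positive ends (producing the $\mu^1\circ\sigma$, $\sigma\circ d^{\varepsilon}$ and $\mu^1\circ\rho$, $\rho\circ d^{\varepsilon}$ terms, whose decorations by $\varepsilon_0$ versus $\varepsilon_1$ must match up correctly along the boundary arcs), or by a Morse-type degeneration at the short chord between the copies (producing the $\sigma,\rho$ terms themselves from the identity-like chord); ensuring these are exactly the terms of the two-copy differential — and that no extra curves appear because of horizontal displaceability — is the delicate point. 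Transversality and the gluing/index computations for the two-positive-puncture moduli spaces are by now standard (they are the same analysis used to define the product in \cite{Productstructure}), so I would cite those and concentrate on the combinatorics of the augmentation decorations and the resulting signs/gradings, which is where the bilinearised setting genuinely differs from the linearised one in \cite{Duality_EkholmetAl}.
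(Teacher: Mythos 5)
Your route is viable but genuinely different from the one taken in the paper, and the difference is precisely designed to avoid the step you flag as your ``main obstacle.'' You propose to redo the geometric argument of \cite{Duality_EkholmetAl} from scratch: construct $\sigma^{\varepsilon_0,\varepsilon_1}$ and $\rho^{\varepsilon_0,\varepsilon_1}$ as counts over moduli spaces of disks with two positive punctures decorated by the two augmentations, verify $D^2=0$ for the total complex by a broken-disk analysis, and conclude by acyclicity from horizontal displaceability. The paper instead makes a purely formal reduction to the \emph{already proven} linearised duality sequence: it applies the theorem of \cite{Duality_EkholmetAl} to the two-copy link $\overline{\Lambda}_2$ equipped with the linearised augmentation $\varepsilon_{12}$ (so the relevant geometric object is the four-component link $2\overline{\Lambda}_2$, the two-copy of the two-copy), and then observes that the resulting exact sequence \eqref{eq:18} splits into four summands according to the component labels: two of them are the linearised duality sequences for $\varepsilon_1$ and $\varepsilon_2$, and the other two are isomorphisms identifying $H_*(C(\rho))$ with $LCH^{n-k-1}_{\varepsilon_2,\varepsilon_1}(\Lambda)$, where $C(\rho)$ is the mapping cone already exhibited in the subcomplex $C^{1,2}\oplus C^{1,2}(f)$ of the two-copy differential. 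The long exact sequence \eqref{eq:17} is then just the cone sequence of $\rho$ together with this identification. What the paper's approach buys is that no new transversality, gluing, index, or sign analysis is needed --- all the two-positive-puncture geometry is encapsulated in the citation --- at the cost of the slightly baroque four-copy bookkeeping. What your approach would buy is a direct chain-level description of $\sigma^{\varepsilon_0,\varepsilon_1}$ and of the pairing; but as written it defers exactly the hard part (the compatibility of the $\varepsilon_0/\varepsilon_1$ boundary decorations with all breakings of the two-positive-puncture moduli spaces, and the degree and sign bookkeeping), and carrying that out carefully is essentially equivalent to re-deriving the splitting the paper reads off for free. If you pursue your version, note also that the bilinearised $\rho^{\varepsilon_0,\varepsilon_1}$ does \emph{not} behave like the linearised one with respect to the fundamental class (see the trefoil computation in Section \ref{sec:trefoil-knot}, where $\rho^{\varepsilon_1,\varepsilon_2}$ is injective in degree $0$), so you cannot import the ``manifold class'' arguments of \cite{Duality_EkholmetAl} wholesale.
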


In Section \ref{sec:geom-interpr-footn} we interpret the bilinearised Legendrian contact homology when $\varepsilon_1$ and $\varepsilon_2$ come from Lagrangian fillings in terms of the Lagrangian Floer homology of those fillings (as defined in \cite{Ekholm_FloerlagCOnt}). In Section \ref{sec:exemple} we provide some simple example of computation which demonstrates the effectiveness of Theorem \ref{thm:equivaugisom} to distinguish augmentations.

{\bf Acknowledgements.} FB was partially supported by ERC Starting Grant StG-239781-ContactMath.
BC was partially supported by the Fonds de la Recherche Scientifique (FRS-FNRS), Belgium. 
Both of us wish to thank Petya Pushkar for many inspiring conversations without which this project might never have been initiated. The present work is part of the authors activities within CAST, a Research Network 
Program of the European Science Foundation.

\tableofcontents

\section{Algebraic setup}
\label{sec:algebraic-setup}

\subsection{The $n$-copy  algebra of a free DGA}
\label{sec:n-copy-algebra}

\paragraph{The $n$-copy algebra.}
\label{sec:module}

Let $\mathbb{K}$ be a field. For each $n\in\mathbb{N}$ we denote by $\mathbb{K}_n$ the semi-simple algebra over $\mathbb{K}$ generated by $e_i$, $i\in\{1,\ldots, n\}$, with the relations $e_i\cdot e_j=\delta_{i,j} e_i$ and $\sum e_i=1$. For a set $A$ we denote by $C_n(A)$ the free $\mathbb{K}_n$-bimodule generated by the elements of $A$ and by $$\mathcal{A}_n(A)=\mathbb{K}_n\oplus C_n(A)\oplus (C_n(A)\otimes C_n(A))\oplus\ldots \oplus C_n(A)^{\otimes k}\oplus\ldots$$ the tensor algebra over $\mathbb{K}_n$ of $C_n(A)$ (here $\otimes$ denotes the tensor product of $\mathbb{K}_n$-modules). For each $a\in A$ we denote by $a_{i,j}$ the element $e_i\cdot a\cdot e_j\in C_n(A)$. Note that as a $\mathbb{K}_n$-bimodule $C_n(A)$ decomposes as $\bigoplus_{(i,j)}C_n(A)_{i,j}$ where $C_n(A)_{i,j}$ is the submodule generated by the $a_{i,j}$. 
In the tensor algebra $\mathcal{A}_n(A)$, $C_n(A)_{i,j}\otimes C_n(A)_{k,l}$ is non zero iff $j=k$. 
A pair of multi-indices $(I,J)$ of the same length  $k$, i.e. $I = (i_1, \ldots, i_k)$
and $J=(j_1, \ldots, j_k)$, is said to be \textit{composable} if for every $l=1,\ldots, k-1$ we have 
$j_l=i_{l+1}$. Then the tensor product $C_n(A)_{I,J} := C_n(A)_{i_1,j_1} \otimes \ldots \otimes C_n(A)_{i_k,j_k}$ does not vanish iff $(I,J)$ is composable, so that the tensor algebra 
$\mathcal{A}_n(A)$ decomposes as a $\mathbb{K}_n$-bimodule as the direct sum of submodules $C_n(A)_{I,J}$ over all composable pairs $(I,J)$. Note that, for $i= 1, \ldots, n$, the 
$\mathbb{K}$-subalgebra $\mathcal{A}_n(A)_i \subset \mathcal{A}_n(A)$  
defined as the tensor algebra of $C_n(A)_{i,i}$, and corresponding to multi-indices $I$ and $J$ 
having all components equal to $i$, is naturally isomorphic to 
$\mathcal{A}(A):=\mathcal{A}_1(A)$ as a $\mathbb{K}$-algebra.

Now suppose that the tensor algebra $\mathcal{A}(A)$ is equipped with a differential $\partial$ such that $(\mathcal{A}(A),\partial)$ is a differential algebra. We define a differential $\partial_n$ on 
$\mathcal{A}_n(A)$ by setting $\partial_n(e_i \cdot a \cdot e_j)= e_i \cdot \partial(a) \cdot e_j$
for all $a \in A$, where $\partial(a)$ is interpreted as an element of $\mathcal{A}_n(A)$.
We then extend $\partial_n$ to $\mathcal{A}_n(A)$ by linearity and by the Leibniz rule.
Note that since $a=\sum_{i,j} a_{i,j}$ in $\mathcal{A}_n(A)$ and $\sum_i e_i = 1$ in $\mathbb{K}_n$
the formula tells us that any word of length $k$ in $\partial(a)$ appears as a sum in $\partial_n(a)$ 
over all composable pairs of multi-indices of length $k$.

\paragraph{Grading.}
\label{sec:grading}

Assume also that the set $A$ comes with a grading map $\text{gr}: A\rightarrow \mathbb{Z}$ which defines a grading on homogeneous elements of $\mathcal{A}(A)$ by $\text{gr}(ab)=\text{gr}(a) +\text{gr}(b)$. Defining the grading of elements of $\mathbb{K}_n$ to be $0$, this extends to a grading on $\mathcal{A}_n(A)$. From now on we assume that the differential $\partial$ has degree $-1$, so
that $\partial_n$ has degree $-1$ as well and turns $(\mathcal{A}_n(A),\partial_n)$ a differential graded algebra (DGA) over $\mathbb{K}_n$. This means that the Leibniz rule becomes: $\partial(ab)=\partial(a)\cdot b+(-1)^{gr(a)}a\cdot\partial(b)$. The DGA $(\mathcal{A},\partial)$ with generating set $A$ is commonly referred in the litterature to a \textit{semi-free} DGA.

In order to agree with the degree convention in the literature (see \cite{Seidel_Fukaya}), there is a stabilisation process when going from the language of differential graded algebras to the language of $\mathcal{A}_\infty$-categories which induces a difference between the notions of grading. We will give two different notations for those two different gradings. We denote by $gr(a)$ the grading of $a\in A$ as a generator of $\mathcal{A}$, this grading will be referred to as the \textit{reduced} grading according to the literature. We will denote by $\vert a\vert=gr(a)+1$ the \textit{absolute} grading of $a$ as a generator of the dual module $C^*(A)$. Although we will pay careful attention to our notations, the convention is that whenever we speak in the world of $\mathcal{A}_\infty$-categories the grading is assumed to be the absolute grading and in the world of DGA the grading is assumed to be the reduced grading. 

If $V$ is a graded vector space, we denote by $V[d]$ the graded vector space isomorphic to $V$ with degree shifted by $d$ (i.e. the grading of $a$ in $V[d]$ is $gr(a)-d$). When defined between graded vector space, we assume that all our linear maps preserve the degree hence a map $F: V\rightarrow W$ shifting the degree by $d$ will be denoted by $F:V\rightarrow W[d]$. 

If $T: (V,\mu^1_V)\rightarrow (W[d],\mu^1_W)$ is a linear map between chain complexes, we will sometimes denote for short $T\circ \mu^1_V + (-1)^d\mu^1_W\circ T$ by $\mu^1(T)$. 

\paragraph{Augmentations of $\mathcal{A}_n(A)$.}
\label{sec:augm-mathc}

Recall that an augmentation of a DGA $\mathcal{A}$ over $\mathbb{K}$ 
is a DGA map from $(\mathcal{A},\partial)$ to $(\mathbb{K},0)$. More precisely it as a map $\varepsilon:\mathcal{A}\rightarrow\mathbb{K}$ satisfying
\begin{itemize}
\item $\varepsilon(a)=0$ if $gr(a)\not=0$.
\item $\varepsilon\circ\partial =0$.
\item $\varepsilon(ab)=\varepsilon(a)\cdot\varepsilon(b)$.
\end{itemize}
Let $E = (\varepsilon_1, \ldots, \varepsilon_n)$ be a $n$-tuple of augmentations of $\mathcal{A}(A)$ 
over $\mathbb{K}$. We define $\varepsilon_E \colon \mathcal{A}_n(A)\rightarrow \mathbb{K}_n$ by 
$\varepsilon_E(a)=\sum_i \varepsilon_i(e_i\cdot a\cdot e_i) \cdot e_i$. In other words,  
$\varepsilon_E$ equals $\varepsilon_i \cdot e_i$ on $\mathcal{A}(A)_i$, and $\varepsilon_E$ vanishes
on $C_n(A)_{I,J}$ when the multi-indices $I$ and $J$ are not constant.

\begin{Prop}\label{sec:augm-mathc-prop}
  $\varepsilon_E$ is an augmentation of $\mathcal{A}_n(A)$ over $\mathbb{K}_n$.
\end{Prop}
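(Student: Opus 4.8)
The plan is to verify, directly from the $\mathbb{K}_n$-bimodule decomposition $\mathcal{A}_n(A)=\bigoplus_{(I,J)\ \text{composable}}C_n(A)_{I,J}$, the analogues over $\mathbb{K}_n$ of the three defining properties of an augmentation, together with $\mathbb{K}_n$-linearity; each of these will reduce to the corresponding property of the $\varepsilon_i$ on the subalgebras $\mathcal{A}_n(A)_i\cong\mathcal{A}(A)$. First, since $\varepsilon_E$ is defined to equal $\varepsilon_i\cdot e_i$ on $\mathcal{A}_n(A)_i$ and to vanish on $C_n(A)_{I,J}$ whenever $(I,J)$ is not constant, it is by construction a map of $\mathbb{K}_n$-bimodules of degree $0$ (note $\varepsilon_E(e_i)=\varepsilon_i(1)\cdot e_i=e_i$, so it is unital). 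It kills every homogeneous element of nonzero reduced degree: a word $a^{(1)}_{i_1,j_1}\otimes\cdots\otimes a^{(k)}_{i_k,j_k}$ on which $\varepsilon_E$ is nonzero must have $I=J=(i,\ldots,i)$ for some $i$, whence $\varepsilon_E$ sends it to $\varepsilon_i(a^{(1)})\cdots\varepsilon_i(a^{(k)})\cdot e_i$, which vanishes unless every $a^{(l)}$, and hence the whole word, has reduced degree $0$.

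Next I would check multiplicativity on the bimodule pieces. Take $x\in C_n(A)_{I,J}$ and $y\in C_n(A)_{K,L}$. If $I$ or $J$ is non-constant then $\varepsilon_E(x)=0$, and the product $xy$, if nonzero, lies in $C_n(A)_{(I,K),(J,L)}$, whose index pair is again non-constant, so $\varepsilon_E(xy)=0=\varepsilon_E(x)\varepsilon_E(y)$; the case where $K$ or $L$ is non-constant is symmetric. If $I=J=(i,\ldots,i)$ and $K=L=(i',\ldots,i')$, then $xy\neq 0$ forces $i=i'$ by composability of the concatenation; when $i\neq i'$ both sides vanish because $e_ie_{i'}=0$, and when $i=i'$ the computation takes place inside $\mathcal{A}_n(A)_i\cong\mathcal{A}(A)$, where $\varepsilon_E$ restricts to $\varepsilon_i\cdot e_i$ and the identity is precisely the multiplicativity of $\varepsilon_i$. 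No signs intervene.

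Finally, for compatibility with the differential it suffices by $\mathbb{K}$-linearity to check $\varepsilon_E(\partial_n(a_{i,j}))=0$ for a generator $a_{i,j}=e_i\cdot a\cdot e_j$. By definition $\partial_n(a_{i,j})=e_i\cdot\partial(a)\cdot e_j$, and expanding a word $b^{(1)}\cdots b^{(m)}$ occurring in $\partial(a)\in\mathcal{A}(A)$ inside $\mathcal{A}_n(A)$ yields the sum of $b^{(1)}_{i_1,j_1}\otimes\cdots\otimes b^{(m)}_{i_m,j_m}$ over composable $(I,J)$ with $i_1=i$ and $j_m=j$. Applying $\varepsilon_E$ annihilates all terms except those with $I$ and $J$ constant; composability then forces $i=j$ with all indices equal to $i$, and such a term maps to $\varepsilon_i(b^{(1)})\cdots\varepsilon_i(b^{(m)})\cdot e_i=\varepsilon_i(b^{(1)}\cdots b^{(m)})\cdot e_i$. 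Summing over the words of $\partial(a)$ gives $\varepsilon_E(\partial_n(a_{i,j}))=\delta_{i,j}\,\varepsilon_i(\partial(a))\cdot e_i$, which is $0$ since $\varepsilon_i\circ\partial=0$.

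The whole argument is bookkeeping with multi-indices; the one place demanding care is the last step, namely tracking how a word of $\partial(a)$ spreads over composable multi-indices in $\mathcal{A}_n(A)$ and observing that $\varepsilon_E$ isolates exactly the constant-diagonal multi-index $(i,\ldots,i)=(j,\ldots,j)$ with $i=j$, so that the hypothesis $\varepsilon_i\circ\partial=0$ can be invoked on the nose. Once these four points are in place, $\varepsilon_E$ is a unital $\mathbb{K}_n$-algebra map of degree $0$ annihilating $\operatorname{im}\partial_n$, i.e. an augmentation of $\mathcal{A}_n(A)$ over $\mathbb{K}_n$. \QED
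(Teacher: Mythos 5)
Your proof is correct and follows essentially the same route as the paper's: multiplicativity is checked on the bimodule pieces $C_n(A)_{I,J}$, reducing to multiplicativity of $\varepsilon_i$ when all indices are constant and equal, and $\varepsilon_E\circ\partial_n=0$ follows from $\partial_n(e_i\cdot a\cdot e_j)=e_i\cdot\partial(a)\cdot e_j$ together with $\varepsilon_i\circ\partial=0$. Your treatment is if anything slightly more careful than the paper's in the non-constant-index case of multiplicativity (where the product need not literally vanish, but $\varepsilon_E$ kills it), and the extra verifications of degree and unitality are harmless.
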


\begin{proof}

First, one must check that $\varepsilon_E$ is an algebra morphism. For $a \in C_n(A)_{I,J}$
and $b \in C_n(A)_{K,L}$, we have $\varepsilon_E(a \cdot b) = \varepsilon_i(a \cdot b) \cdot e_i = \varepsilon_i(a) \varepsilon_i(b) \cdot e_i = \varepsilon_E(a) \varepsilon_E(b)$ if $I, J, K, L$ have
all components equal to $i$. Otherwise, $a \cdot b = 0$ so that  $\varepsilon_E(a \cdot b) = 0$
and at least one of $\varepsilon_E(a)$ or $\varepsilon_E(b)$ vanishes as well. 

Second,  one must check that $\varepsilon_E \circ\partial_n=0$. Since $\partial_n(e_i \cdot a \cdot e_j) = e_i\cdot\partial(a)\cdot e_j$, we have $\varepsilon_E\circ\partial_n(e_i \cdot a \cdot e_j)=
e_i \cdot \varepsilon_i\circ\partial(a) \delta_{i,j}$ which vanishes since $\varepsilon_i$ is an augmentation.
\end{proof}

Using the augmentation $\varepsilon_E$ one defines the $\mathbb{K}_n$-algebra map $\phi_{\varepsilon_E}:\mathcal{A}_n\rightarrow\mathcal{A}_n$ which sends a generator $a$ to $a+\varepsilon_E(a)$. Consider the induced differential $\partial^{\varepsilon_E}=\phi_{\varepsilon_E}\circ\partial_n\circ\phi_{\varepsilon_E}^{-1}$. Since $\varepsilon_E$ is an augmentation, $\partial^{\varepsilon_E}\vert_{C_n(A)}$ has no constant term and hence decomposes as a sum $\partial_1^{\varepsilon_E}\oplus\partial_2^{\varepsilon_E}\oplus\ldots\oplus\partial_d^{\varepsilon_E}\oplus\ldots$ where $\partial_d^{\varepsilon_E}:C_n(A)\rightarrow C_n(A)^{\otimes d}[-1]$. From $\partial^{\varepsilon_E}\circ\partial^{\varepsilon_E}=0$ one gets that $\partial_1^{\varepsilon_E}\circ\partial_1^{\varepsilon_E}=0$.

Dualising each of $\partial_d^{\varepsilon_E}$ one gets a family of maps $\mu^{d,n}_{\varepsilon_E}: C^*_n(A)^{\otimes d}\rightarrow C^*_n(A)[2-d]$ (using the absolute grading here) define by the relation:
$$\mu^{d,n}(f)(b)=f(\partial_d^{\varepsilon_E}(b))$$

for any $\mathbb{K}_n$-balanced forms $f$ on $C_n(A)$.

Similarly to \cite{Productstructure} from $\partial^{\varepsilon_E}\circ\partial^{\varepsilon_E}=0$ one gets that the family of maps $\mu^{d,n}_{\varepsilon_E}$ is an $\mathcal{A}_n$-algebra structure on $C^*_n(A)$ that is:
\begin{equation}
\sum_{i=1}^d \sum_{j=0}^{d-i}(-1)^{\maltese_j}\mu_{\varepsilon_E}^{d-i+1,n}\big(a_d,a_{d-1},\ldots,a_{j+i+1}, \mu_{\varepsilon_E}^{i,n}(a_{j+i},\ldots, a_{j+1}),a_j,\ldots, a_1\big)=0\label{eq:11}
\end{equation}

where $\maltese_j=\vert a_1\vert+\ldots + \vert a_j \vert-j$.

\subsection{The augmentation category}
\label{sec:augm-categ}

\paragraph{Bilinearised complexes.}
\label{sec:augm-categ-1}

Note that $\mu^{d,n}_{\varepsilon_E}$ maps $C^*_{i_d,i_{d-1}}\otimes C^*_{i_{d-1},i_{d-2}}\otimes\cdots\otimes C^*_{i_1,i_0}$ to $C^*_{i_d,i_0}[2-d]$ and that this maps depends only on $\partial$ and $\varepsilon_{i_d}, \varepsilon_{i_{d-1}},\ldots, \varepsilon_{i_0}$. This implies that, after identifying each $C^*_{i,j}$ with $C^*(A)$ for each $(d+1)$-tuple of augmentations $\varepsilon_0,\ldots, \varepsilon_d$ the map $\mu^{d,n}_{\varepsilon_E}$ induces a map $\mu^d_{\varepsilon_d,\cdots,\varepsilon_0}:C^*(A)^{\otimes d}\rightarrow C^*(A)[2-d]$ independent of $n\geq d+1$ and the remaining augmentations in $E$. A similar discussion holds for the maps $\partial^{\varepsilon_E}_d: C(A)\rightarrow C(A)[-1]$. From equation \eqref{eq:11} one deduces that:

\begin{equation}
\sum_{i=1}^d \sum_{j=0}^{d-i}(-1)^{\maltese_j}\mu_{\varepsilon_d,\cdots,\varepsilon_{j+i},\varepsilon_{j},\ldots, \varepsilon_0}^{d-i+1}\big(a_d,a_{d-1},\ldots,a_{j+i+1},\mu_{\varepsilon_{j+i},\ldots, \varepsilon_{j}}^i(a_{j+i},\ldots, a_{j+1}),a_j,\ldots,a_1\big)=0.\label{eq:12}
\end{equation}

In particular when $d=1$ one gets $\mu^1_{\varepsilon_1,\varepsilon_0}\circ \mu^1_{\varepsilon_1,\varepsilon_0}=0$, similarly defining $d^{\varepsilon_0,\varepsilon_1}$ the map on $C(A)$ associated to $\partial_1^{\varepsilon_E}$ one gets $d^{\varepsilon_0,\varepsilon_1}\circ d^{\varepsilon_0,\varepsilon_1}=0$. Hence we get two complexes called the \textit{bilinearised complexes}, $C_{\varepsilon_1,\varepsilon_0}=(C^*(A),\mu^1_{\varepsilon_1,\varepsilon_0})$ and $C^{\varepsilon_0,\varepsilon_1}=(C(A),d^{\varepsilon_0,\varepsilon_1})$. The homology of $C^{\varepsilon_0,\varepsilon_1}$ is called the \textit{bilinearised homology} of $(\mathcal{A}(A),\partial)$ with respect to $\varepsilon_0$ and $\varepsilon_1$ and the homology of $C_{\varepsilon_1,\varepsilon_0}$ is called the \textit{bilinearised cohomology}.

The maps $\mu^d_{\varepsilon_d,\ldots,\varepsilon_0}$ are higher order compositions of an $\mathcal{A}_\infty$-category we describe here.

\paragraph{Definition of $\textit{Aug(A)}$.}
\label{sec:defin-text}

The augmentation category $\textit{Aug(A)}$ is the $\mathcal{A}_\infty$-category defined by
\begin{itemize}
\item $Ob(\mathit{Aug(A)})$ is the set of augmentations of
  $\mathcal{A}$.  
\item Morphisms from $\varepsilon_1$ to $\varepsilon_2$ are the complexes
  $C_{\varepsilon_2,\varepsilon_1}=(C^*(A),\mu^1_{\varepsilon_2,\varepsilon_1}$).  
\item The $\mathcal{A}_\infty$-composition maps are
$$
\mu^d_{\varepsilon_d,\ldots,\varepsilon_0}:C_{\varepsilon_d,\varepsilon_{d-1}}\otimes\ldots\otimes C_{\varepsilon_1,\varepsilon_0}\rightarrow C_{\varepsilon_d,\varepsilon_0}[2-d].
$$
\end{itemize}

In order to facilitate computations, we unravel here the construction to give an explicit formula for those maps. If $\partial(a_j)=\sum\limits_{i_1, \ldots, i_k} x^j_{i_1, \ldots, i_k} \cdot b^j_{i_1}\ldots b^j_{i_k}$ 
with $x^j_{i_1, \ldots, i_k} \in \mathbb{K}$ and $b^j_{i_1}, \ldots, b^j_{i_k} \in A$ then

\begin{align}
  \label{eq:13}
&\partial^{\varepsilon_0,\ldots,\varepsilon_d}_d(a_j)=
\sum\limits_{i_1, \ldots, i_k}\sum_{j_1<j_2<\ldots<j_d}x^j_{i_1, \ldots, i_k}\varepsilon_0(b^j_{i_1})\ldots \varepsilon_0(b^j_{i_{j_1-1}})\varepsilon_1(b^j_{i_{j_1+1}})\ldots\varepsilon_{d-1}(b^j_{i_{j_{d}-1}})
\nonumber\\
&\hspace{6cm} \varepsilon_d(b^j_{i_{j_d+1}})\ldots\varepsilon_d(b^j_{i_k})\cdot b^j_{i_{j_1}}b^j_{i_{j_2}}\ldots b^j_{i_{j_d}}  
\end{align}

and 

\begin{align}
  &\mu^d_{\varepsilon_d,\ldots,\varepsilon_0}(b_d,\ldots,
  b_1)=\nonumber\\
&\sum\limits_{a_j\in A}
   \sum_{l_d < \ldots < l_1}
    \sum\limits_{\stackrel{i_1,\ldots,i_k}{b_d=b^j_{i_{l_d}},\ldots,
      b_1=b^j_{i_{l_1}}}}x^j_{i_1, \ldots, i_k}\varepsilon_d(b^j_{i_1})\ldots\varepsilon_d(b^j_{i_{l_d-1}})\varepsilon_{d-1}(b^j_{i_{l_{d}+1}})\ldots\varepsilon_0(b^j_{i_{l_1}+1})\ldots\varepsilon_0(b^j_{i_k})\cdot
  a_j\label{eq:9}
  \end{align}
where we do a slight abuse and identify $a_j$ with its dual generator.

To make things clearer, if $w_db_dw_{d-1}b_{d-1}w_{d-2}\ldots w_1b_1w_0$ is a word in $\partial(a)$ then we have a corresponding contribution 
$\varepsilon_d(w_d)\ldots\varepsilon_1(w_1)\varepsilon_0(w_0)\cdot a$ in $\mu^{d}_{\varepsilon_d,\ldots,\varepsilon_0}(b_d, \ldots, b_1)$ (see Figures \ref{fig:diff} and \ref{fig:comp}). When obvious we will drop the subscripts and write $\mu^d$ instead of $\mu^d_{\varepsilon_d,\ldots,\varepsilon_0}$.

The most useful special cases of Equations \eqref{eq:13} and \eqref{eq:9} are when $d=1$ which gives
\begin{align}
  \label{eq:1}
  d^{\varepsilon_0,\varepsilon_1}(a_j)&=\sum\limits_{i_1, \ldots, i_k}\sum\limits_{l=1\ldots k}x^j_{i_1, \ldots, i_k}\varepsilon_0(b_{i_1}^j)\ldots \varepsilon_0(b^j_{i_l-1})\varepsilon_1(b^j_{i_l+1})\ldots\varepsilon_1(b^j_{i_k})\cdot b^j_{i_l}\\
\mu^1_{\varepsilon_1,\varepsilon_0}(b)&=\sum_{a_j\in A}\sum\limits_{\stackrel{i_1, \ldots, i_k}{b=b^j_{i_l}}}x^j_{i_1, \ldots, i_k}\varepsilon_1(b_{i_1}^j)\ldots \varepsilon_1(b^j_{i_l-1})\varepsilon_0(b^j_{i_l+1})\ldots\varepsilon_0(b^j_{i_k})\cdot a_j.
\end{align}

The homological category of $\mathit{Aug}(A)$ is $H(\mathit{Aug}(A))$ whose objects are the same as those of $\mathit{Aug}(A)$, the morphism spaces are $Hom(\varepsilon_0,\varepsilon_1)=H(C_{\varepsilon_1,\varepsilon_0})$ and the composition of morphism is given by $[b_1]\circ [b_2]=(-1)^{\vert b_2 \vert}[\mu_{\varepsilon_2,\varepsilon_1,\varepsilon_0}^2(b_2,b_1)]$ (note the case $d=2$ of Equation \eqref{eq:12} implies that this is well defined). The case $d=3$ of Equation \eqref{eq:9} implies that this composition is associative, hence (except for the existence of an identity morphism) $H(\mathit{Aug}(A))$ is a genuine category.
 
\subsection{Equivalence of non-unital $\mathcal{A}_\infty$-categories}
\label{sec:equiv-categ}

By the stabilisation process for the grading when going from the world of DGA to the one of $\mathcal{A}_\infty$-category, generators of degree $0$ in the morphism spaces are elements $a$ of $A$ such that $gr(a)=-1$. This implies that, by degree considerations, the augmentation category has no reason for being unital (in any of the sense of \cite{Seidel_Fukaya}). The notion of equivalence of categories in this context is not immediate. For convenience for the reader we will recall here the basic definitions of $\mathcal{A}_\infty$-functors and natural transformations from \cite{Seidel_Fukaya}. Then we will propose a definition for equivalence of $\mathcal{A}_\infty$-categories which do not necessarily have units (cohomological or strict). For objects $\varepsilon_0$ and $\varepsilon_1$ in an $\mathcal{A}_\infty$-category $\mathcal{A}$ we maintain the notation $C_{\varepsilon_1,\varepsilon_0}$ for the morphism spaces from
$\varepsilon_0$ to $\varepsilon_1$. As we will consider several categories, we will sometimes denote those morphism spaces $C^{\mathcal{A}}_{\varepsilon_1,\varepsilon_0}$ in order to specify in which category the morphism space is considered.

\begin{defn}
  An $\mathcal{A}_\infty$-functor $\mathcal{F}$ between two $\mathcal{A}_\infty$-categories $(\mathcal{A},\{\mu^d_{\mathcal{A}}\})$ and $(\mathcal{B},\{\mu^d_{\mathcal{B}}\})$ consists of the following:
  \begin{itemize}
  \item {A map $F: Ob(\mathcal{A})\rightarrow Ob(\mathcal{B})$}
\item {For each $d\geq 1$ and $(\varepsilon_0,\varepsilon_1,\ldots, \varepsilon_d)\in Ob(\mathcal{A})^{d+1}$, a map $F^d_{\varepsilon_d,\ldots, \varepsilon_0}: C_{\varepsilon_d,\varepsilon_{d-1}}\otimes\ldots\otimes C_{\varepsilon_1,\varepsilon_0}\rightarrow C_{F(\varepsilon_d),F(\varepsilon_0)}[1-d]$ satisfying:
    \begin{align}\label{eq:4}
      &\sum_{r=1}^d\sum_{s_1+\ldots + s_r=d} \mu_{\mathcal{B}}^r\big( F^{s_r}_{\varepsilon_d,\ldots, \varepsilon_{d-s_r}}(a_d,\ldots,a_{d-s_r+1}),\ldots ,F^{s_1}_{\varepsilon_{s_1},\ldots, \varepsilon_0}(a_{s_1},\ldots, a_1)\big)\\
      =&\sum_{i=1}^d\sum_{j=0}^{d-i}(-1)^{\maltese_j} F^{d-i+1}_{\varepsilon_d,\ldots, \varepsilon_{j+i},\varepsilon_j,\ldots, \varepsilon_0}\big(a_d,\ldots, \mu_{\mathcal{A}}^i(a_{j+i},\ldots ,a_{j+1}),\ldots , a_1\big).\nonumber
    \end{align}
    }
  \end{itemize}
\end{defn}

The terms $d=1,2$ of equation \eqref{eq:4} imply that $F^1_{\varepsilon_1,\varepsilon_0}$ descends to a map $H(F^1): H(C_{\varepsilon_1,\varepsilon_0})\rightarrow H(C_{F(\varepsilon_1),F(\varepsilon_0)})$ which is a functor from $H(\mathcal{A})$ to $H(\mathcal{B})$. Again we drop the subscripts from the notation as they are most of the time obvious and write $F^d$ for $F^d_{\varepsilon_d,\ldots, \varepsilon_0}$.

We now recall the definition of a pre-natural transformation between $\mathcal{A}_\infty$-functors. 

\begin{defn}
  Let $\mathcal{F}$ and $\mathcal{G}$ be two $\mathcal{A}_\infty$-functors from $\mathcal{A}$ to $\mathcal{B}$. A pre-natural transformation of degree $g$ from $\mathcal{F}$ to $\mathcal{G}$ is a family $T = \{T^d\}_{d \ge0}$ where each $T^d$ consists of maps $T^d_{\varepsilon_d,\ldots, \varepsilon_0}: C_{\varepsilon_d,\varepsilon_{d-1}}\otimes\ldots\otimes C_{\varepsilon_1,\varepsilon_0}\rightarrow C_{G(\varepsilon_d),F(\varepsilon_0)}[g-d]$.
\end{defn}

The set of pre-natural transformations comes with a degree $1$ differential defined by

  \begin{align}\label{eq:3}
    \mu^1(T)^d=&\sum_{r=1}^{d+1}\sum_{i=1}^r\sum_{s_1+\cdots +s_r=d} (-1)^\dagger \mu_{\mathcal{B}}^r\big(G^{s_r}(a_d,\ldots , a_{d-s_r+1}), \ldots,
    G^{s_{i+1}}(a_{s_1+\ldots+s_{i+1}},\ldots, a_{s_1+\ldots+s_{i}+1}), \nonumber\\
& \hspace{2cm} T^{s_i}_{\varepsilon_{s_1+\ldots+s_{i}},\ldots,\varepsilon_{s_1+\ldots+s_{i-1}}}(a_{s_1+\ldots + s_i},\ldots ,a_{s_1+\ldots +s_{i-1}+1}),\\
&\hspace{2cm} F^{s_{i-1}}(a_{s_1+\ldots +s_{i-1}},\ldots,
    a_{{s_1+\ldots +s_{i-2}+1}}), \ldots, F^{s_1}(a_{s_1},\ldots,
    a_{1})\big) \nonumber\\
&-\sum_{i=1}^d\sum_{j=0}^{d-i} (-1)^{\maltese_j+g-1} T^{d-i+1}_{\varepsilon_{d},\ldots ,\varepsilon_{j+i},\varepsilon_{j},\ldots, \varepsilon_0}\big(a_d,\ldots, a_{j+i+1},\mu_{\mathcal{A}}^i(a_{j+i},\ldots,
    a_{j+1}),a_j,\ldots, a_1\big)\nonumber
      \end{align}
where $\dagger=(g-1)(\vert a_1\vert+\ldots +\vert a_{s_1+\ldots + s_{i-1}}\vert -s_1-\ldots -s_{i-1})$.

A \textit{natural transformation} is a pre-natural transformation such that $\mu^1(T)=0$.
For convenience of the reader we will explicitly detail the case $d=0,1$ and $2$ of $\mu^1(T)^d=0$ as it will be useful later.

\begin{itemize}
\item $d=0$:
  \begin{equation}
    \label{eq:6}
    T^0\in C_{G(\varepsilon_0),F(\varepsilon_0)}\text{ s.t. }\mu^1(T^0)=0
  \end{equation}
\item $d=1$:
    \begin{equation}
      \label{eq:2}
      \mu^2(G(a_1),T^0)+(-1)^{(g-1)(\vert a_1\vert -1)}\mu^2(T^0,F(a_1))+\mu^1(T^1(a_1))-T^1(\mu^1(a_1))=0
    \end{equation}

\item $d=2$:
    \begin{align}
      \label{eq:5}
     & \mu^3(G^1(a_2),G^1(a_1),T^0)+(-1)^{(g-1)(\vert a_1\vert -1)}\mu^3(G^1(a_2),T^0,F^1(a_1))\\
+& (-1)^{(g-1)(\vert a_1\vert+ \vert a_2\vert )}\mu^3(T^0,F^1(a_1),F^1(a_2))\nonumber\\
+&\mu^2(G^2(a_2,a_1),T^0)+(-1)^{(g-1)(\vert a_1\vert+\vert a_2\vert)}\mu^2(T^0,F^2(a_2,a_1))\nonumber\\
+&\mu^2(G^1(a_2),T^1(a_1))+(-1)^{(g-1)(\vert a_1\vert -1)}\mu^2(T^1(a_2),F^1(a_1))+\mu^1(T^2(a_2,a_1))\nonumber\\
+&(-1)^gT^2(a_2,\mu^1(a_1))+(-1)^{g+\vert a_1\vert-1}T^2(\mu^1(a_2),a_1)+(-1)^{g}T^1(\mu^2(a_2,a_1))=0\nonumber
    \end{align}
\end{itemize}

Equation \eqref{eq:6} implies that $T^0$ descends to a family of maps from $F(\varepsilon_0)$ to $G(\varepsilon_0)$ in the homological category ($[T^0_{\varepsilon_0}]\in H(C_{G(\varepsilon_0),F(\varepsilon_0)})$) which by \eqref{eq:2} satisfies $H(G^1)([a])\circ [T^0_{\varepsilon_0}]=(-1)^{g\cdot\vert a\vert}[T^0_{\varepsilon_0}]\circ H(F^1)([a])$ i.e. $[T^0]$ is a natural transformation from $H(F^1)$ to $H(G^1)$.

Now assume that $T^0=0$. Then equation \eqref{eq:2} implies that 
$T^1_{\varepsilon_1,\varepsilon_0}: C_{\varepsilon_1,\varepsilon_0}\rightarrow C_{G(\varepsilon_1),F(\varepsilon_0)}[g-1]$ is a chain map inducing $[T^1_{\varepsilon_1,\varepsilon_0}]$ in homology. 

Following \cite{Seidel_Fukaya} we denote by $\mathit{nu\textrm{-}fun}(\mathcal{F},\mathcal{G})^g$ the set of pre-natural transformations from $\mathcal{F}$ to $\mathcal{G}$. It comes with a filtration where $F^r(\mathit{nu\textrm{-}fun}(\mathcal{F},\mathcal{G})^g)$ are pre-natural transformations such that $T^k=0$ for $k\leq r$. It is obvious from Equation \eqref{eq:3} that $\mu^1$ preserves this filtration. The previous remarks about the $T^0=0$ case is part of the statement in \cite[Section (1f)]{Seidel_Fukaya} that the first page of the spectral sequence associated to this filtration is 
$$
E^{r,g}_1=\prod\limits_{\varepsilon_0,\ldots,\varepsilon_r}Hom^g_{\mathbb{K}}(Hom_{H(\mathcal{A})}(\varepsilon_{r-1},\varepsilon_{r})\otimes\ldots\otimes Hom_{H(\mathcal{A})}(\varepsilon_{1},\varepsilon_{0}),Hom_{H(\mathcal{B})}(F(\varepsilon_0),G(\varepsilon_r)).
$$

\paragraph{Yoneda modules.}
\label{sec:yonedas-module}

Recall from \cite{Seidel_Fukaya} that given an object $\varepsilon$ of an $\mathcal{A}_\infty$-category $\mathcal{A}$ one can see the assignment $\varepsilon_1\mapsto C_{\varepsilon,\varepsilon_1}$ as being part of the definition of a right $\mathcal{A}_\infty$-module over $\mathcal{A}$, called the right Yoneda module of $\varepsilon$ and denoted by $\mathcal{Y}_r^{\varepsilon}$. The action $\mu^d_{\mathcal{Y}_r^{\varepsilon}}: \mathcal{Y}_r^{\varepsilon}(\varepsilon_{d-1})\otimes C_{\varepsilon_{d-1},\varepsilon_{d-2}}\otimes\ldots\otimes C_{\varepsilon_1,\varepsilon_0}\rightarrow \mathcal{Y}_r^{\varepsilon}(\varepsilon_0)$ of $\mathcal{A}$ on $\mathcal{Y}_r^{\varepsilon}$ is given by the $\mathcal{A}_\infty$ operation $\mu^d$ (see \cite[Section (1l)]{Seidel_Fukaya}).  This assignment is part of an $\mathcal{A}_\infty$-functor, called the Yoneda embedding and denoted by $\mathcal{Y}_r$, from $\mathcal{A}$ to the $\mathcal{A}_\infty$-category of right modules over $\mathcal{A}$. This functor is cohomologically faithful, it however fails to be cohomologically full if the category $\mathcal{A}$ lacks a cohomological unit (one cannot construct the retraction of \cite[Section (2g)]{Seidel_Fukaya}).

Let $\varepsilon$ and $\varepsilon'$ be two objects of an $\mathcal{A}_\infty$-category $\mathcal{A}$, we say that $\varepsilon$ is \textit{pseudo-isomorphic} to $\varepsilon'$ if there exists a quasi-isomorphism of $\mathcal{A}_\infty$-modules $T: \mathcal{Y}_r^{\varepsilon}\rightarrow\mathcal{Y}_r^{\varepsilon'}$ (i.e. $T$ induces an isomorphism in homology, see Definition \ref{rem:yoneda_isom}). We will consider two $\mathcal{A}_\infty$-categories $\mathcal{A}$ and $\mathcal{B}$ to be \textit{pseudo-equivalent} if there exists a cohomologically full and faithful functor $\mathcal{F}:\mathcal{A}\rightarrow\mathcal{B}$ such that any object of $\mathcal{B}$ is pseudo-isomorphic to an object of the form $F(\varepsilon)$. In this situation $\mathcal{F}$ will be referred to as a \textit{pseudo-equivalence} of $\mathcal{A}_\infty$-categories. Of course, when both involved categories admit cohomological units the definition recovers the one from \cite{Seidel_Fukaya} since the Yoneda embedding is cohomologically full and the last part of the definition implies that $H(\mathcal{F})$ is essentially surjective. 

\paragraph{Cohomological adjunctions.}
\label{sec:pseudo-adjunctions}
In order to prove that stable isomorphic DGAs have pseudo-equivalent augmentation categories, one needs to prove that the relation of pseudo-equivalence is an equivalence relation. In order to do so we will introduce the notion of cohomologically adjoint functors and cohomological adjunction.

It is worthwhile to unravel the definition of morphism of modules in the $\mathcal{A}_\infty$ sense. A morphism of Yoneda modules is given by a sequence $T^d:\mathcal{Y}_r^{\varepsilon}(\varepsilon_{d-1})\otimes C_{\varepsilon_{d-1},\varepsilon_{d-2}}\otimes\ldots\otimes C_{\varepsilon_1,\varepsilon_0}\rightarrow\mathcal{Y}_r^{\varepsilon'}(\varepsilon_0)$, $d \ge 1$, i.e. a family of maps 
$$
T^d:C_{\varepsilon,\varepsilon_{d-1}}\otimes C_{\varepsilon_{d-1},\varepsilon_{d-2}}\otimes\ldots\otimes C_{\varepsilon_1,\varepsilon_0}\rightarrow C_{\varepsilon',\varepsilon_0},
$$
such that (see \cite[Equation (1.21)]{Seidel_Fukaya}):
\begin{align}\label{eq:7}
  &\sum_{j=0}^{d-1} (-1)^{\dagger}\mu^{j+1}(T^{d-j}(a_d,a_{d-1},\ldots,a_{j+1}),a_j,\ldots,a_1)\nonumber\\
+&\sum_{i=1}^d\sum_{j=0}^{d-i} (-1)^{\dagger}T^{d-i+1}(a_d,a_{d-1},\ldots,\mu^i(a_{j+i},\ldots,a_{j+1}),a_j,\ldots,a_1)=0.
\end{align}

For $d=1$ Equation \eqref{eq:7} implies that $T^1$ is a chain map, we recall now the definition of quasi-isomorphism.

\begin{defn}\label{rem:yoneda_isom}
  A morphism $T$ between two Yoneda modules $\mathcal{Y}_r^\varepsilon$ and $\mathcal{Y}_r^{\varepsilon'}$ is a \textit{quasi-isomorphism} if for every object $\varepsilon_0$ the chain-map $T^1_{\varepsilon_0}: C_{\varepsilon,\varepsilon_0}\rightarrow C_{\varepsilon',\varepsilon_0}$ induces an isomorphism in homology.
\end{defn}

Let $\mathcal{F}$ be a pseudo-equivalence from $\mathcal{A}$ to $\mathcal{B}$ then for any object $\varepsilon$ of $\mathcal{B}$ there is an object of $\mathcal{A}$ that we denote by $G(\varepsilon)$ such that $\varepsilon$ is pseudo-isomorphic to $F(G(\varepsilon))$. Considering all together the pseudo-isomorphism leads to a family of maps
$$T^d:C_{\varepsilon_d,\varepsilon_{d-1}}\otimes\ldots\otimes C_{\varepsilon_1,\varepsilon_0}\rightarrow C_{F\big(G(\varepsilon_d)\big),\varepsilon_0}.$$

It follows from \cite[Theorem 2.9]{Seidel_Fukaya} adapted in our context that $G$ is part of an cohomologically full and faithful functor $\mathcal{G}: \mathcal{B}\rightarrow \mathcal{A}$. Hence the family of maps $T^d$ is a pre-natural transformation $T\in F^1(\mathit{nu\textrm{-}fun}(Id,\mathcal{F}\circ\mathcal{G}))$. Comparing the equations \eqref{eq:3} and \eqref{eq:7} we get that this pre-natural transformation satisfies:
\begin{align}
  \mu^1(T)^d=\sum_{r=3}^d\sum_{s_1+\cdots +s_r=d} \!\!\!\!\!
  (-1)^\dagger &\mu_{\mathcal{B}}^r\big((\mathcal{F}\circ\mathcal{G})^{s_r}(a_d,\ldots,
  a_{d-s_r+1}), \ldots , (\mathcal{F}\circ\mathcal{G})^{s_{3}}(a_{s_1+s_2+s_{3}},\ldots,
  a_{s_1+s_2+1}),\nonumber\\
&T^{s_2}(a_{s_1+s_2},\ldots ,a_{s_1}),a_{s_1-1},\ldots,a_0\big). \label{eq:8}
  \end{align}

Note that since $\mathcal{F}$ is cohomologically full and faithful it admits an inverse $[F^1]^{-1}$ in homology, since we work over a field this map actually lifts to a chain map $U^1: C^{\mathcal{B}}_{F(\varepsilon'_1),F(\varepsilon'_0)}\rightarrow C^{\mathcal{A}}_{\varepsilon'_1,\varepsilon'_0}$. The composition $U^1_{G(\varepsilon_1),\varepsilon'_0}\circ T^1_{\varepsilon_1,F(\varepsilon'_0)}: C^{\mathcal{B}}_{\varepsilon_1,F(\varepsilon'_0)}\rightarrow C^{\mathcal{A}}_{G(\varepsilon_1),\varepsilon'_0}$ is a quasi-isomorphism inducing an isomorphism $\Theta:Hom(F(\varepsilon'_0),\varepsilon_1)\simeq Hom(\varepsilon'_0,G(\varepsilon_1))$. Combining equations \eqref{eq:5} (with $T^0=0$) and \eqref{eq:8} for the couple $(F^1(a),b)$ where $a\in C^{\mathcal{A}}_{\varepsilon_1',\varepsilon'_0}$ and $b\in C^\mathcal{B}_{\varepsilon_0,F(\varepsilon'_0)}$ one gets 

$$\mu_{\mathcal{B}}^2(T^1(b),F^1(a))+T^1(\mu_{\mathcal{A}}^2(b,F^1(a))=\mu^1(T^2) (b,F^1(a)).$$

This implies that $a$ induces the following commutative diagram in homology:

 \begin{align*}
\xymatrix{ {\text{Hom}(F(\varepsilon'_0),\varepsilon_0)} \ar[d]_{[\mu_{\mathcal{B}}^2(\cdot,F(a))]} \ar[r]^{\Theta} & {\text{Hom}(\varepsilon'_0,G(\varepsilon_0))} \ar[d]_{[\mu_{\mathcal{A}}^2(\cdot,a)]} \\
                    {\text{Hom}(F(\varepsilon'_1),\varepsilon_0)} \ar[r]^{\Theta}                  & {\text{Hom}(\varepsilon_1',G(\varepsilon_0))}                                          }
                    \end{align*}

A similar diagram exists for left composition. All in all, the fact that $T$ induces an isomorphism of Yoneda modules and that $\mathcal{F}$ and $\mathcal{G}$ are cohomologically full and faithful implies that $T^1$ induces an adjunction between $H(F)$ and $H(G)$. This justifies the following definition.

\begin{defn}\label{def:adjunction}
Let $\mathcal{F}:\mathcal{A}\rightarrow \mathcal{B}$ and $\mathcal{G}:\mathcal{B}\rightarrow\mathcal{A}$ be two cohomologically full and faithful functors. A \textit{cohomological adjunction} between $\mathcal{F}$ and $\mathcal{G}$ is a pre-natural transformation $T: Id\rightarrow \mathcal{F}\circ\mathcal{G}$ satisfying equation \eqref{eq:8}.
 \end{defn}

The previous discussion together with the adaptation of \cite[Theorem 2.9]{Seidel_Fukaya} in our context implies the following proposition. 

\begin{Prop}\label{sec:cohom-adjunctprop}
An $\mathcal{A}_\infty$-functor $\mathcal{F}: \mathcal{A}\rightarrow\mathcal{B}$ is a pseudo-equivalence iff there exists a cohomologically full and faithful functor $\mathcal{G}:\mathcal{B}\rightarrow\mathcal{A}$ and a cohomological adjunction $T$ between $\mathcal{F}$ and $\mathcal{G}$.
  \end{Prop}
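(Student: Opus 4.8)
The plan is to prove the two implications separately; the forward one is essentially contained in the discussion preceding the statement, so the main new work is the converse, together with the precise form of the non-unital adaptation of \cite[Theorem 2.9]{Seidel_Fukaya} that both directions invoke.

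For the forward implication, assume $\mathcal{F}$ is a pseudo-equivalence. By definition it is cohomologically full and faithful, and each object $\varepsilon$ of $\mathcal{B}$ comes with an object $G(\varepsilon)$ of $\mathcal{A}$ and a quasi-isomorphism of right Yoneda modules $\mathcal{Y}_r^{\varepsilon}\to\mathcal{Y}_r^{F(G(\varepsilon))}$; fix such choices. First I would promote the assignment $\varepsilon\mapsto G(\varepsilon)$ to a cohomologically full and faithful $\mathcal{A}_\infty$-functor $\mathcal{G}\colon\mathcal{B}\to\mathcal{A}$, which is the non-unital adaptation of \cite[Theorem 2.9]{Seidel_Fukaya} announced just before the statement: the higher components $G^d$ are built by induction on $d$, solving the functor relation \eqref{eq:4} order by order, and at each stage the obstruction class is killed because $H(F^1)$ is invertible (so one can divide by it at the level of homology and then lift, using that we work over a field). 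Then, assembling the fixed module quasi-isomorphisms over all objects of $\mathcal{B}$ produces a pre-natural transformation $T\in F^1(\mathit{nu\textrm{-}fun}(Id,\mathcal{F}\circ\mathcal{G}))$, and substituting $F=Id$, $G=\mathcal{F}\circ\mathcal{G}$ into \eqref{eq:3} and matching it against the module-morphism relation \eqref{eq:7} (which each assembled morphism satisfies) reproduces exactly \eqref{eq:8}; hence $T$ is a cohomological adjunction in the sense of Definition \ref{def:adjunction}, as required.

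For the converse, assume $\mathcal{G}\colon\mathcal{B}\to\mathcal{A}$ is cohomologically full and faithful and $T\colon Id\to\mathcal{F}\circ\mathcal{G}$ is a cohomological adjunction. By Definition \ref{def:adjunction} this presupposes $\mathcal{F}$ is cohomologically full and faithful, so only essential surjectivity up to pseudo-isomorphism remains. Fix an object $\eta$ of $\mathcal{B}$; I would restrict each component $T^d$ to arguments whose first tensor factor lies in $C_{\eta,-}$, obtaining maps $T^d_\eta\colon C_{\eta,\varepsilon_{d-1}}\otimes\cdots\otimes C_{\varepsilon_1,\varepsilon_0}\to C_{F(G(\eta)),\varepsilon_0}$. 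Feeding $F=Id$, $G=\mathcal{F}\circ\mathcal{G}$ into \eqref{eq:3} and using \eqref{eq:8}, one checks that after this restriction the surviving terms are precisely the two sums of \eqref{eq:7}, so $T_\eta$ is a morphism of right Yoneda modules $\mathcal{Y}_r^\eta\to\mathcal{Y}_r^{F(G(\eta))}$. By Definition \ref{rem:yoneda_isom} it then suffices to show $T^1_\eta\colon C_{\eta,\varepsilon_0}\to C_{F(G(\eta)),\varepsilon_0}$ is a quasi-isomorphism for each $\varepsilon_0$. For this I would read the computation preceding Definition \ref{def:adjunction} in reverse: lifting $[F^1]^{-1}$ and $[G^1]^{-1}$ to chain maps (possible over a field), the commuting squares arising from \eqref{eq:5} with $T^0=0$ and from \eqref{eq:8} exhibit $[T^1_\eta]$ as, up to composition with these lifts, the comparison map witnessing that $H(\mathcal{F})\circ H(\mathcal{G})$ is full and faithful; combined with full-faithfulness of $H(\mathcal{F})$ and $H(\mathcal{G})$ individually this forces $[T^1_\eta]$ to be invertible. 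Hence $T_\eta$ is a pseudo-isomorphism between $\eta$ and $F(G(\eta))$, so every object of $\mathcal{B}$ is pseudo-isomorphic to one in the image of $F$ and $\mathcal{F}$ is a pseudo-equivalence.

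The step I expect to be the real obstacle is the non-unital adaptation of \cite[Theorem 2.9]{Seidel_Fukaya} — and, symmetrically, the quasi-isomorphism claim for $T^1_\eta$ in the converse: both rely on the fact that a cohomologically full and faithful $\mathcal{A}_\infty$-functor admits a homotopy inverse on each morphism complex even when the categories are not cohomologically unital, so that the inductive obstructions and the naturality squares involve only genuine chain homotopies and never the (nonexistent) unit; verifying that this replacement of the unit by explicit homotopies goes through, and is not circular, is where the care is needed. The remaining ingredient — tracking Koszul signs through the translation between \eqref{eq:3}, \eqref{eq:7} and \eqref{eq:8}, and checking that fixing the first tensor slot preserves the module relation — is routine.
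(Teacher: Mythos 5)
Your forward implication is essentially the paper's own argument: a pseudo-equivalence supplies the assignment $\varepsilon\mapsto G(\varepsilon)$, the non-unital adaptation of Seidel's Theorem 2.9 promotes it to a cohomologically full and faithful $\mathcal{G}$, and assembling the chosen Yoneda quasi-isomorphisms into a pre-natural transformation and comparing \eqref{eq:3} with \eqref{eq:7} yields \eqref{eq:8}. That direction is fine.

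The converse contains a genuine gap at the step where you claim that the naturality squares coming from \eqref{eq:5} (with $T^0=0$) and \eqref{eq:8}, ``combined with full-faithfulness of $H(\mathcal{F})$ and $H(\mathcal{G})$'', force $[T^1_\eta]$ to be invertible. This cannot be derived from Definition \ref{def:adjunction} as stated: the zero pre-natural transformation $T=0$ satisfies \eqref{eq:8} identically (both sides vanish) and makes every one of those squares commute trivially, yet $T^1=0$ is certainly not a quasi-isomorphism, and with $T=0$ the proposition's conclusion would assert that \emph{any} pair of cohomologically full and faithful functors in opposite directions makes $\mathcal{F}$ a pseudo-equivalence, which is false (take $\mathcal{B}$ with an extra object not pseudo-isomorphic to anything in the image of $F$). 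In a non-unital category there is no element at which to evaluate $T^1$ to produce a canonical ``comparison map'', so full-faithfulness of the two functors gives you no purchase on $T^1$ itself. The quasi-isomorphism property of $T^1_{\eta,\varepsilon_0}\colon C_{\eta,\varepsilon_0}\to C_{F(G(\eta)),\varepsilon_0}$ has to be an \emph{input} to the notion of cohomological adjunction --- it is implicitly so in the paper, where $T$ is by construction assembled from pseudo-isomorphisms of Yoneda modules and the surrounding discussion speaks of ``the fact that $T$ induces an isomorphism of Yoneda modules'' --- not something you can recover from \eqref{eq:8} and naturality. Once that nondegeneracy is granted, your restriction argument (checking via \eqref{eq:3}, \eqref{eq:7}, \eqref{eq:8} that each $T_\eta$ is a module morphism, hence by Definition \ref{rem:yoneda_isom} a pseudo-isomorphism $\eta\simeq F(G(\eta))$) does complete the converse correctly.
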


We cannot emphasise enough that the fact that we work over a field is absolutely necessary for the last claim to be true, recall that in \cite{Seidel_Fukaya} one relies on the fact that we can split any complex as a sum of an acyclic complex and a complex with vanishing differential.

Note that the degree $1$ pre-natural transformation $T: Id\rightarrow Id$ defined by $T^1=Id$ and $T^d=0$ for $d\not= 1$ satisfies $\mu^1(T)^d=\mu^d$ for $d$ even and $0$ for $d$ odd and thus is a cohomological adjunction from $Id$ to $Id$. Under the geometric picture of Section \ref{sec:bilin-legendr-cont}, this should correspond to the interpolation between the two comparison maps given by the trivial cylinder which counts holomorphic curves with moving boundary conditions on (a small perturbation of) the trivial cylinder. In this situation only the trivial curves contribute which gives the definition of $T^1$.

Note also that in the presence of $c$-unit, the evaluation $T^1(c)$ is an element of $C_{G(F(\varepsilon_1)),\varepsilon_0}$ which induces a natural isomorphism from $H(\mathcal{G}\circ\mathcal{F})$ to the identity as one expects from a quasi-equivalence in this context.

We are now able to prove the following proposition.
\begin{Prop}\label{prop:cohom-adjunct}
The relation of pseudo-equivalence of $\mathcal{A}_\infty$-categories is an equivalence relation.
  \end{Prop}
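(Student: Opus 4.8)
The plan is to verify the three axioms of an equivalence relation for pseudo-equivalence, using Proposition \ref{sec:cohom-adjunctprop} as the principal tool so that we never have to manipulate Yoneda modules directly: a pseudo-equivalence $\mathcal{F}:\mathcal{A}\to\mathcal{B}$ is the same data as a triple $(\mathcal{F},\mathcal{G},T)$ where $\mathcal{G}:\mathcal{B}\to\mathcal{A}$ is cohomologically full and faithful and $T:Id\to\mathcal{F}\circ\mathcal{G}$ is a cohomological adjunction satisfying \eqref{eq:8}.

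First, \textbf{reflexivity}: the identity functor $Id:\mathcal{A}\to\mathcal{A}$ is cohomologically full and faithful, and the remark immediately preceding Proposition \ref{prop:cohom-adjunct} already exhibits the degree $1$ pre-natural transformation $T$ with $T^1=Id$, $T^d=0$ for $d\neq1$, as a cohomological adjunction from $Id$ to $Id$. Hence $Id$ is a pseudo-equivalence and $\mathcal{A}$ is pseudo-equivalent to itself.

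Second, \textbf{symmetry}: if $\mathcal{F}:\mathcal{A}\to\mathcal{B}$ is a pseudo-equivalence, write it via Proposition \ref{sec:cohom-adjunctprop} as $(\mathcal{F},\mathcal{G},T)$ with $\mathcal{G}:\mathcal{B}\to\mathcal{A}$ cohomologically full and faithful. I would show that $\mathcal{G}$ is itself a pseudo-equivalence by producing a cohomological adjunction $S:Id_{\mathcal{A}}\to\mathcal{G}\circ\mathcal{F}$. The natural candidate for $S^1$ is the chain-level lift $U^1$ of $[F^1]^{-1}$ composed with $T^1$, as constructed in the paragraph preceding Definition \ref{def:adjunction}, reflecting the fact there that $T^1$ induces an adjunction between $H(F)$ and $H(G)$; the key input is the existence of chain-level inverses, which is exactly where working over a field is used. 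The higher terms $S^d$ for $d\ge2$ must be built so that \eqref{eq:8} holds for $S$; the homological adjunction statement guarantees that the relevant degree $1$ obstruction class vanishes, so one can solve for the $S^d$ inductively in $d$ using the filtration on $\mathit{nu\textrm{-}fun}(Id,\mathcal{G}\circ\mathcal{F})$ and the fact that $\mu^1$ preserves it (this is the adaptation of \cite[Theorem 2.9]{Seidel_Fukaya} already invoked in the excerpt). This inductive construction over the filtration is the main obstacle: one must check that at each stage the obstruction to extending $S$ is a $\mu^1$-cocycle, that its class lives in a piece of $E_1^{r,g}$ that is killed by the adjunction property, and that the chosen primitive keeps $S$ in $F^1(\mathit{nu\textrm{-}fun})$; this is a diagram chase of the same flavour as, but slightly more involved than, the computations displayed before Definition \ref{def:adjunction}.

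Third, \textbf{transitivity}: given pseudo-equivalences $\mathcal{F}:\mathcal{A}\to\mathcal{B}$ and $\mathcal{F}':\mathcal{B}\to\mathcal{C}$, the composite $\mathcal{F}'\circ\mathcal{F}:\mathcal{A}\to\mathcal{C}$ is an $\mathcal{A}_\infty$-functor, and $H(F'\circ F)=H(F')\circ H(F)$ is full and faithful as a composite of full and faithful functors, so $\mathcal{F}'\circ\mathcal{F}$ is cohomologically full and faithful. Writing $\mathcal{F}=(\mathcal{F},\mathcal{G},T)$ and $\mathcal{F}'=(\mathcal{F}',\mathcal{G}',T')$, I take $\mathcal{G}\circ\mathcal{G}':\mathcal{C}\to\mathcal{A}$ (again cohomologically full and faithful) and assemble a cohomological adjunction $R:Id_{\mathcal{C}}\to(\mathcal{F}'\circ\mathcal{F})\circ(\mathcal{G}\circ\mathcal{G}')$ out of $T'$ and the image of $T$ under $\mathcal{F}'$, again correcting the higher terms by the same filtration/obstruction argument so that \eqref{eq:8} is satisfied. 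Once all three properties are established, Proposition \ref{sec:cohom-adjunctprop} gives that pseudo-equivalence is an equivalence relation. The recurring technical heart of the proof is the inductive solution of \eqref{eq:8} using the spectral sequence of the filtration on pre-natural transformations; everything else is formal.
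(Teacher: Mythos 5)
Your proposal is correct and follows the same skeleton as the paper: reflexivity via the adjunction $T^1=Id$, $T^d=0$ already exhibited before the proposition; symmetry and transitivity via the characterisation of Proposition \ref{sec:cohom-adjunctprop}. There are two points where your route diverges in detail, both acceptable. For symmetry, the paper simply invokes Proposition \ref{sec:cohom-adjunctprop} (together with Remark \ref{sec:cohom-adjunctremleftright} on the left/right ambiguity), whereas you actually construct the reverse adjunction $S\colon Id_{\mathcal{A}}\to\mathcal{G}\circ\mathcal{F}$ starting from $U^1\circ T^1$; this fills in a step the paper leaves implicit. For transitivity, the paper is more explicit at first order --- it writes the quasi-isomorphism as the composite $C^{\mathcal{C}}_{\varepsilon_1,\varepsilon_0}\xrightarrow{T_0\circ G_1}C^{\mathcal{B}}_{F_0G_0G_1(\varepsilon_1),G_1(\varepsilon_0)}\xrightarrow{F_1}C^{\mathcal{C}}_{F_1F_0G_0G_1(\varepsilon_1),F_1G_1(\varepsilon_0)}\xrightarrow{S\circ T_1^1}C^{\mathcal{C}}_{F_1F_0G_0G_1(\varepsilon_1),\varepsilon_0}$, where $S$ is a chain-level inverse of $H(F_1^1)\circ H(G_1^1)$ --- and then produces the higher-order terms by extending $S$ to an $\mathcal{A}_\infty$-functor via the homological perturbation lemma and taking $\mu^2(\mathcal{L}_S T_1,\mathcal{L}_{F_1}\mathcal{R}_{G_1}T_0)$, rather than by your inductive obstruction-theoretic argument on the filtration of $\mathit{nu\textrm{-}fun}$. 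Both devices accomplish the same thing and both, like the paper's own proof, rely essentially on working over a field to lift homological inverses to chain level; the paper itself only claims the first-order map matters for its applications, so your level of detail on the higher terms is on par with the original.
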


  \begin{proof}
The identity functor is obviously a pseudo-equivalence as $T^1=Id$ induces an isomorphism from $\mathcal{Y}^r_\varepsilon$ to itself hence the relation is reflexive. It follows from Proposition \ref{sec:cohom-adjunctprop} that it is symmetric. In order to prove transitivity let $\mathcal{F}_0:\mathcal{A}\rightarrow\mathcal{B}$ and $\mathcal{G}_0:\mathcal{B}\rightarrow \mathcal{A}$ be cohomologically adjoint pseudo-equivalences. Similarly let $\mathcal{F}_1:\mathcal{B}\rightarrow\mathcal{C}$ be the pseudo-equivalence from $\mathcal{B}$ to $\mathcal{C}$ with cohomological adjoint $\mathcal{G}_1$. Denote by $T_0$ and $T_1$ the respective homological adjunctions.

$\mathcal{F}_1$ and $\mathcal{G}_1$ being cohomologically full and faithful, one gets that the map $H(F^1_1)\circ H(G^1_1)$ is an isomorphism. Again as we work over a field, this implies that there exists a chain map $S_{\varepsilon_0,\varepsilon_1}: C^{\mathcal{C}}_{F_1\big(G_1(\varepsilon_1)\big),F_1\big(G_1(\varepsilon_0)\big)}\rightarrow C^{\mathcal{C}}_{\varepsilon_1,\varepsilon_0}$ which induces $\big(H(F^1_1)\circ H(G^1_1)\big)^{-1}$ in homology.

This implies that the following composition
\begin{displaymath}
  \label{eq:14}
  \xymatrix { C^{\mathcal{C}}_{\varepsilon_1,\varepsilon_0} \ar[r]^-{T_0\circ G_1} & C^{\mathcal{B}}_{F_0\circ G_0\circ G_1(\varepsilon_1),G_1(\varepsilon_0)} \ar[r]^-{F_1}& {C^{\mathcal{C}}_{F_1\circ F_0\circ G_0\circ G_1(\varepsilon_1),F_1\circ G_1(\varepsilon_0)}} \ar[r]^-{S\circ T^1} & {C^{\mathcal{C}}_{F_1\circ F_0\circ G_0\circ G_1(\varepsilon_1),\varepsilon_0}}}
\end{displaymath}

is a quasi-isomorphism. This is the first application necessary to define an adjunction from $\mathcal{F}_1\circ \mathcal{F}_0$ to $\mathcal{G}_1\circ \mathcal{G}_0$. 

In order to define the higher order terms denote by $\mathcal{A}'$ the image of $\mathcal{F}^1\circ \mathcal{G}^1$. The homological perturbation lemma of \cite[Section (1i)]{Seidel_Fukaya} allows to extend $S$ to an $\mathcal{A}_\infty$-functor on $\mathcal{A}'$. The homological adjunction is thus given by $\mu^2(\mathcal{L}_S T^1,\mathcal{L}_{F_1}\mathcal{R}_{G_1}T_0)$ where $\mu^2$ is the composition of natural transformations, $\mathcal{L}$ and $\mathcal{R}$ are respectively the left and right composition of a pre-natural transformation with a functor (see \cite[Section (1e)]{Seidel_Fukaya}). We are brief here as in the present paper only the homological result will be relevant and thus only the first map matters.
  \end{proof}

\begin{Rem}\label{sec:comp-quasi-equiv-1}
  Note that at the most elementary level one gets that for an
  $\mathcal{A}_\infty$-category the set of isomorphism types of the
  groups $H(C_{\varepsilon_1,\varepsilon_0})$ is invariant under
  quasi-equivalences.
\end{Rem}

\begin{Rem}\label{sec:cohom-adjunctremleftright}
  Note that the map $S\circ T$ appearing in the proof of Proposition \ref{prop:cohom-adjunct} induces an isomorphism from $Hom(\varepsilon_1,F(\varepsilon'_0))$ to $Hom(G(\varepsilon_1),\varepsilon'_0)$ for any $\varepsilon'_0\in \mathcal{A}$ and $\varepsilon_1\in\mathcal{B}$ whenever $T$ is a homological adjunction. This justifies the ambiguity between left and right adjunction.
\end{Rem}

\subsection{Invariance}
\label{sec:invariance}

\paragraph{Construction of Functor on \textit{Aug(A)}.}
\label{sec:constr-funct-text}

Let $\mathcal{A}$ and $\mathcal{B}$ be two semi-free DGAs with generating sets $A$ and $B$ respectively and let $f: \mathcal{A}\rightarrow\mathcal{B}$ be a DGA map. We denote by $f_n$ the associated map between $\mathcal{A}_n$ and $\mathcal{B}_n$.

For an augmentation $\varepsilon$ set $F(\varepsilon)=\varepsilon\circ f$. 

For a $n$-tuple of augmentations $E = (\varepsilon_1, \ldots, \varepsilon_n)$ one defines $f_{E}:\mathcal{A}_n\rightarrow\mathcal{B}_n$ by the $\mathbb{K}_n$-algebra morphism $\phi_{\varepsilon_E}\circ f_n \circ\phi_{F(\varepsilon)_E}^{-1}$. This is a DGA-map from $(\mathcal{A}_n,\partial^{\varepsilon_E\circ f})\rightarrow (\mathcal{A},\partial^{\varepsilon_E})$. Its restriction to $C_n(A)$ decomposes as a sum $f_1^{\varepsilon_E}\oplus f_2^{\varepsilon_E}\oplus\ldots\oplus f_j^{\varepsilon_E}\oplus\ldots$ where $f_j^{\varepsilon_E}:C_n(A)\rightarrow C_n(B)^{\otimes j}$ are $\mathbb{K}_n$-bimodule homomorphisms. Note that there is no $0$-order term in the decomposition as those coming from $F_n(\varepsilon_E)$ cancel with those coming from $\phi_{{\varepsilon_E}\circ f_n}^{-1}$. Dualising each of the $f_j^{\varepsilon_E}$ and restricting them to $
C_{\varepsilon_d,\varepsilon_{d-1}} \otimes \ldots \otimes C_{\varepsilon_2, \varepsilon_1}$ 
one gets maps $F^d_{\varepsilon_E}:C_{\varepsilon_d,\varepsilon_{d-1}} \otimes \ldots \otimes C_{\varepsilon_2, \varepsilon_1}\rightarrow C_{F(\varepsilon_d),F(\varepsilon_1)}[1-d]$ (again note that here we use the absolute grading). As in Section \ref{sec:augm-categ-1} those only depend on the augmentations $\varepsilon_1, \ldots, \varepsilon_d$ and on $f$. We denote by $F^d$ the sets of maps $\{F^d_{\varepsilon_d, \ldots, \varepsilon_1}\}$ for all $d$-tuples of augmentations. From $f^{\varepsilon_E}\circ \partial^{F(\varepsilon_E)}=\partial^{\varepsilon_E}\circ f^{\varepsilon_E}$ one deduces the following

\begin{Prop}\label{thm:constr-funct-text}
  The family $\mathcal{F}=\{F^d\}$ is an $A_\infty$-functor from $\mathit{Aug(B)}$ to $\textit{Aug(A)}$.
\end{Prop}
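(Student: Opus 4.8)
The plan is to verify the $\mathcal{A}_\infty$-functor relation \eqref{eq:4} for the family $\mathcal{F}=\{F^d\}$ by dualising the statement $f^{\varepsilon_E}\circ\partial^{F(\varepsilon_E)}=\partial^{\varepsilon_E}\circ f^{\varepsilon_E}$, which holds because $f_{E}=\phi_{\varepsilon_E}\circ f_n\circ\phi_{F(\varepsilon)_E}^{-1}$ is a DGA-map by construction and the conjugation is precisely designed to make this intertwining identity hold. First I would work at the level of the $n$-copy algebras: since $f^{\varepsilon_E}$ is an algebra map with $f^{\varepsilon_E}|_{C_n(A)}=\bigoplus_j f_j^{\varepsilon_E}$ and $\partial^{\varepsilon_E}|_{C_n(B)}=\bigoplus_d \partial^{\varepsilon_E}_d$ (no constant terms, as noted in the text), I would expand both sides of $f^{\varepsilon_E}\circ\partial^{F(\varepsilon_E)}=\partial^{\varepsilon_E}\circ f^{\varepsilon_E}$ on a generator $a\in C_n(A)$, collecting the components landing in $C_n(B)^{\otimes d}$. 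The right-hand side, being $\partial^{\varepsilon_E}$ applied to a word $f_{s_1}^{\varepsilon_E}(a)\otimes\cdots$ — more precisely to the sum over $j$ of words built from the $f_j^{\varepsilon_E}(a)$ via the coproduct structure — produces, by the Leibniz rule for $\partial^{\varepsilon_E}$, exactly the terms $\mu_{\mathcal{B}}^r(F^{s_r},\ldots,F^{s_1})$ after dualising; the left-hand side produces $f^{\varepsilon_E}$ applied to $\partial^{F(\varepsilon_E)}(a)$, whose $\partial$-words are acted on letter-by-letter by the $f_j^{\varepsilon_E}$, yielding the terms $F^{d-i+1}(\ldots,\mu_{\mathcal{A}}^i,\ldots)$ after dualising.

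Second I would descend from the $n$-copy picture to the bilinearised picture exactly as in Section \ref{sec:augm-categ-1}: the component of $f_j^{\varepsilon_E}$ sending $C_n(A)_{i_j,i_{j-1}}\otimes\cdots$ — i.e.\ the restriction to $C_{\varepsilon_d,\varepsilon_{d-1}}\otimes\cdots\otimes C_{\varepsilon_2,\varepsilon_1}$ after identifying each $C_{i,j}^*$ with $C^*(A)$ — depends only on $f$ and the augmentations $\varepsilon_1,\ldots,\varepsilon_d$ and on no other data, and is independent of $n\ge d+1$. The same is true for the $\mu_{\mathcal{A}}^i$, $\mu_{\mathcal{B}}^r$ and their composites appearing in \eqref{eq:4}. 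Hence the $n$-copy identity specialises, for each choice of composable multi-index, to the desired relation \eqref{eq:4} among the $F^d$, $\mu_{\mathcal{A}}^d$ and $\mu_{\mathcal{B}}^d$.

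The main obstacle I expect is bookkeeping of signs: one must check that the Koszul signs $(-1)^{\maltese_j}$ appearing in \eqref{eq:4} are produced correctly when dualising, i.e.\ that the signs coming from the graded Leibniz rule for $\partial^{\varepsilon_E}$ (which has degree $-1$ in the reduced grading) match the signs in the $\mathcal{A}_\infty$ convention after the degree-shift/stabilisation from reduced to absolute grading. This is the same sign verification already carried out implicitly in \cite{Productstructure} and in passing from \eqref{eq:11} to \eqref{eq:12}; the cleanest route is to note that the $\mathcal{A}_\infty$ relations \eqref{eq:11}, \eqref{eq:12} and the functor relation \eqref{eq:4} are all obtained by the same dualisation procedure from $(\partial^{\varepsilon_E})^2=0$ and $f^{\varepsilon_E}\partial^{F(\varepsilon_E)}=\partial^{\varepsilon_E}f^{\varepsilon_E}$ respectively, so that the sign conventions are forced to be compatible. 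Beyond signs, the verification is a direct unravelling of definitions, so I would not dwell on it in detail.
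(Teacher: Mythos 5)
Your proposal is correct and follows exactly the paper's route: the paper offers no separate proof beyond the sentence preceding the Proposition, namely that the relation \eqref{eq:4} is obtained by dualising $f^{\varepsilon_E}\circ\partial^{F(\varepsilon_E)}=\partial^{\varepsilon_E}\circ f^{\varepsilon_E}$ and then restricting to composable multi-indices as in Section \ref{sec:augm-categ-1}. Your expansion of the left- and right-hand sides via the Leibniz rule and the decompositions $f^{\varepsilon_E}|_{C_n(A)}=\bigoplus_j f_j^{\varepsilon_E}$, $\partial^{\varepsilon_E}|_{C_n(B)}=\bigoplus_d\partial_d^{\varepsilon_E}$ is precisely the intended unravelling, signs included.
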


The construction is functorial with respect to the composition as stated in the following 
\begin{Prop}
  Let $f$ and $g$ be two DGA maps then the $\mathcal{A}_\infty$-functor associated to $f\circ g$ is $\mathcal{G}\circ \mathcal{F}$, where the composition of $\mathcal{A}_\infty$-functors is defined as in \cite[Section (1b)]{Seidel_Fukaya}. 
\end{Prop}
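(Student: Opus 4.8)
The plan is to establish the identity at the level of the twisted $\mathbb{K}_n$-algebra maps and then dualise, so that the statement reduces to functoriality of the $n$-copy construction together with a bookkeeping check that the word-length decomposition of a composite tensor-algebra map matches Seidel's composition rule for $\mathcal{A}_\infty$-functors. Write $f\colon\mathcal{A}\to\mathcal{B}$ and $g\colon\mathcal{A}'\to\mathcal{A}$, so $f\circ g\colon\mathcal{A}'\to\mathcal{B}$, and let $\mathcal{F}\colon\mathit{Aug}(B)\to\mathit{Aug}(A)$, $\mathcal{G}\colon\mathit{Aug}(A)\to\mathit{Aug}(A')$ and $\mathcal{H}\colon\mathit{Aug}(B)\to\mathit{Aug}(A')$ be the associated functors, with object maps $F(\varepsilon)=\varepsilon\circ f$, $G(\delta)=\delta\circ g$ and $H(\varepsilon)=\varepsilon\circ(f\circ g)$. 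Right away $H=G\circ F$ on objects. I would then record the elementary facts used below: the $n$-copy construction is functorial, $(f\circ g)_n=f_n\circ g_n$, since both sides are $\mathbb{K}_n$-algebra maps agreeing on the generators $a_{i,j}=e_i\cdot a\cdot e_j$ (on which $f_n$ acts by $a_{i,j}\mapsto e_i\cdot f(a)\cdot e_j$); and, for a tuple $E=(\varepsilon_1,\ldots,\varepsilon_n)$ of augmentations of $\mathcal{B}$, the pull-back $\varepsilon_E\circ f_n$ of $\varepsilon_E$ equals $F(E)_E$, the augmentation of $\mathcal{A}_n$ built from $F(E)=(\varepsilon_1\circ f,\ldots,\varepsilon_n\circ f)$.

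The core computation is the following. Using $f_E=\phi_{\varepsilon_E}\circ f_n\circ\phi_{F(E)_E}^{-1}$ and $g_{F(E)}=\phi_{F(E)_E}\circ g_n\circ\phi_{H(E)_E}^{-1}$, the two inner copies of $\phi_{F(E)_E}$ cancel, so
\[
f_E\circ g_{F(E)}=\phi_{\varepsilon_E}\circ f_n\circ g_n\circ\phi_{H(E)_E}^{-1}=\phi_{\varepsilon_E}\circ(f\circ g)_n\circ\phi_{H(E)_E}^{-1}=(f\circ g)_E
\]
as $\mathbb{K}_n$-algebra maps $\mathcal{A}'_n\to\mathcal{B}_n$. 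Restricting to the generating module $C_n(A')$ and decomposing by word length, multiplicativity of $f_E$ and the absence of a $0$-order term (noted already in the construction of $\mathcal{F}$) give
\[
(f\circ g)^{\varepsilon_E}_m=\sum_{k\ge1}\ \sum_{j_1+\cdots+j_k=m}\big(f^{\varepsilon_E}_{j_1}\otimes\cdots\otimes f^{\varepsilon_E}_{j_k}\big)\circ g^{F(E)_E}_k,
\]
using that concatenating words in the tensor algebra is the identity on the underlying $\mathbb{K}_n$-bimodules.

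Dualising this identity term by term (the paper's dualisation being sign-free) and restricting the resulting maps to the morphism-space tensor blocks $C_{\varepsilon_d,\varepsilon_{d-1}}\otimes\cdots\otimes C_{\varepsilon_2,\varepsilon_1}$, the dual of $f^{\varepsilon_E}_j$ becomes $F^j$, the dual of $g^{F(E)_E}_k$ becomes $G^k$, and the dual of $(f\circ g)^{\varepsilon_E}_m$ becomes $H^m$; the displayed formula turns into exactly the composition rule $(\mathcal{G}\circ\mathcal{F})^m(a_m,\ldots,a_1)=\sum_k\sum_{j_1+\cdots+j_k=m}G^k\big(F^{j_k}(\cdots),\ldots,F^{j_1}(\cdots)\big)$ of \cite[Section (1b)]{Seidel_Fukaya}. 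Since every $F^d$, $G^d$, $H^d$ depends only on the relevant DGA map and on the $d+1$ involved augmentations, and is independent of $n\ge d+1$ as established in the construction, this identity of $\mathbb{K}_n$-bimodule maps descends to the sought identity $\mathcal{H}=\mathcal{G}\circ\mathcal{F}$ of $\mathcal{A}_\infty$-functors, the object maps having been matched above.

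The main obstacle is not conceptual but a matter of lining up conventions: one must check that dualising $\big(f_{j_1}\otimes\cdots\otimes f_{j_k}\big)\circ g_k$ reproduces the factors of $G^k(F^{j_k},\ldots,F^{j_1})$ in the correct order and with the correct Koszul signs, and that the reindexing between a word in $\partial(a)$ and the input slots of $\mu^d$ fixed in Equations \eqref{eq:13} and \eqref{eq:9} is consistent with the composition convention of \cite{Seidel_Fukaya}. It is also worth verifying explicitly that $f_E$ has no $0$-order term so that only $k\ge1$ and $j_i\ge1$ occur in the bookkeeping above. Once the conventions are fixed, these are routine verifications.
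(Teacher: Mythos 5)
Your argument is correct and follows essentially the same route as the paper: decompose $(f\circ g)^{\varepsilon_E}\vert_{C_n(A')}$ by word length using multiplicativity of $f_E$, then dualise to recover Seidel's composition formula. The only difference is that you make explicit the cancellation $f_E\circ g_{F(E)}=\phi_{\varepsilon_E}\circ f_n\circ g_n\circ\phi_{H(E)_E}^{-1}=(f\circ g)_E$ and the compatibility $\varepsilon_E\circ f_n=F(E)_E$, which the paper leaves implicit in the phrase ``as $f$ is a DGA map one gets the decomposition''.
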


\begin{proof}
  Consider $(f\circ g)^{\varepsilon_E}\vert _{C(A)}$. As $f$ is a DGA map one gets the decomposition
$$(f\circ g)^{\varepsilon_E}\vert _{C(A)}(a)=\sum \sum f_{s_1}\otimes\cdots  \otimes f_{s_r} (g_i(a)).$$
 Dualising this equation and denoting $\mathcal{H}$ the functor associated to $f\circ g$ leads to the formula
$$H^d(a_d,\cdots,a_1)=\sum\limits_r\sum\limits_{s_1+\ldots+s_r=d}G^r(F^{s_r}(a_d,\ldots,a_{d-s_r+1}),\ldots, F^{s_1}(a_{s_1},\ldots,a_1))$$
which is the composition formula of $\mathcal{A}_\infty$-functors.
\end{proof}

\paragraph{Equivalence of augmentations.}
\label{sec:equiv-augm}
We are now ready to give the definition of equivalent augmentations.
\begin{defn}
  Two augmentations $\varepsilon_1$ and $\varepsilon_2$ are equivalent if there is an $\mathcal{A}_\infty$-functor $\mathcal{F}$ such that $\varepsilon_1=F(\varepsilon_2)$ and a homological adjunction $T$ from $F$ to the identity.
\end{defn}

Note that in \cite{Bourgeois_Survey} equivalence of augmentation has a different shape. Precisely it is said that two augmentations are equivalent if there exists a derivation $K$ such that $\varepsilon_1=\varepsilon_2\circ e^{K\partial+\partial K}$. This definition is of course problematic if our coefficient ring is not an algebra over a field of characteristic $0$. Equivalent augmentations from \cite{Bourgeois_Survey} are equivalent in our sense using $f=e^{K\partial+\partial K}$ to construct the 
$\mathcal{A}_\infty$-functor. The natural transformation is given by dualising homogenous component of $K$. It is not clear to us wether or not the converse is true, compare also Formula \eqref{eq:7} with the formula appearing in \cite[Lemma 3.13]{Kalman_&_Lagrangian_Cobordism}.

Definition \ref{rem:yoneda_isom} and \ref{sec:cohom-adjunctremleftright} imply the following

\begin{Thm}\label{thm:equivaug}
  Let $\varepsilon_1$ and $\varepsilon_2$ be two augmentations of $\mathcal{A}$. If $\varepsilon_1$ is equivalent to $\varepsilon_2$ then for any augmentation $\varepsilon$, we have $H(C_{\varepsilon_1,\varepsilon})\simeq H(C_{\varepsilon_2,\varepsilon})$ and $H(C_{\varepsilon,\varepsilon_1})\simeq H(C_{\varepsilon,\varepsilon_2})$.
\end{Thm}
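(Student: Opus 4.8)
The plan is to deduce Theorem~\ref{thm:equivaug} directly from the definition of equivalent augmentations together with the structural results about cohomological adjunctions established earlier in this section. Suppose $\varepsilon_1$ is equivalent to $\varepsilon_2$, so there is an $\mathcal{A}_\infty$-functor $\mathcal{F}$ with $\varepsilon_1 = F(\varepsilon_2)$ and a homological adjunction $T$ from $\mathcal{F}$ to the identity, in the sense of Definition~\ref{def:adjunction}. The first step is to record that $\mathcal{F}$ is in particular cohomologically full and faithful (this is part of the data of a homological adjunction), so $[F^1]$ is an isomorphism on all morphism spaces; in particular $H(C_{\varepsilon_2,\varepsilon}) \simeq H(C_{F(\varepsilon_2),F(\varepsilon)})$ for every augmentation $\varepsilon$, and similarly for the other variable.

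The second step is to feed $T$ into the machinery of Remark~\ref{sec:cohom-adjunctremleftright}: for a homological adjunction $T$, the composite $S\circ T$ induces an isomorphism $\mathrm{Hom}(\varepsilon',F(\varepsilon'_0)) \simeq \mathrm{Hom}(G(\varepsilon'),\varepsilon'_0)$ — but here $\mathcal{G}$ is the identity functor, so this reads $\mathrm{Hom}(\varepsilon', F(\varepsilon'_0)) \simeq \mathrm{Hom}(\varepsilon',\varepsilon'_0)$ in $H(\mathit{Aug}(A))$, i.e. $H(C_{F(\varepsilon'_0),\varepsilon'}) \simeq H(C_{\varepsilon'_0,\varepsilon'})$, and the left-handed version $H(C_{\varepsilon',F(\varepsilon'_0)}) \simeq H(C_{\varepsilon',\varepsilon'_0})$. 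Specialising $\varepsilon'_0 = \varepsilon_2$ (so $F(\varepsilon'_0) = \varepsilon_1$) and letting $\varepsilon' = \varepsilon$ range over all augmentations yields exactly $H(C_{\varepsilon_1,\varepsilon}) \simeq H(C_{\varepsilon_2,\varepsilon})$ and $H(C_{\varepsilon,\varepsilon_1}) \simeq H(C_{\varepsilon,\varepsilon_2})$, which is the claim.

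Concretely, I would write out the relevant quasi-isomorphism in one variable and indicate that the other is symmetric. Using the defining equation~\eqref{eq:8} for the adjunction $T$ (with $\mathcal{G} = Id$, so $\mathcal{F}\circ\mathcal{G} = \mathcal{F}$), the $d=1$ component of $\mu^1(T) = 0$ tells us that $T^1_{\varepsilon,\varepsilon_2}: C_{\varepsilon,\varepsilon_2} \to C_{F(\varepsilon),\varepsilon_2}$ is a chain map, and the higher components make the collection of $T^d$ into a quasi-isomorphism of Yoneda modules as in Definition~\ref{rem:yoneda_isom}; combined with the chain-level inverse $U^1$ of $F^1$ (which exists because we work over a field, exactly as in the paragraph preceding Definition~\ref{def:adjunction}), the composite $U^1 \circ T^1$ realises the isomorphism on homology. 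The other variable is handled by the left-composition analogue of the commutative diagram displayed before Definition~\ref{def:adjunction}.

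The main obstacle is essentially bookkeeping rather than conceptual: one must be careful that the roles of "left" and "right" adjunction match the two assertions of the theorem, and that the specialisation $\mathcal{G} = Id$ is legitimate — i.e. that the identity functor is cohomologically full and faithful (it is) so that Remark~\ref{sec:cohom-adjunctremleftright} applies verbatim. The only genuine subtlety, already flagged in the text, is the repeated use of the hypothesis that $\mathbb{K}$ is a field, which is what allows the homological isomorphisms $[F^1]$ to be lifted to honest chain maps so that the composites above are quasi-isomorphisms of complexes and not merely isomorphisms of homology functors; I would make sure this is invoked explicitly at the point where $U^1$ is introduced.
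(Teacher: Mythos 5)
Your argument is correct and is essentially the paper's own proof: the paper derives the theorem in one line from the quasi-isomorphism of Yoneda modules (Definition \ref{rem:yoneda_isom}) and Remark \ref{sec:cohom-adjunctremleftright}, which is exactly the pair of adjunction isomorphisms $\mathrm{Hom}(F(\varepsilon_2),\varepsilon)\simeq \mathrm{Hom}(\varepsilon_2,\varepsilon)$ and $\mathrm{Hom}(\varepsilon,F(\varepsilon_2))\simeq \mathrm{Hom}(\varepsilon,\varepsilon_2)$ that you spell out with $\mathcal{G}=Id$ and $\varepsilon'_0=\varepsilon_2$. Your explicit flagging of the field hypothesis (to lift $[F^1]^{-1}$ to the chain map $U^1$) matches the caveat the paper itself makes right after Proposition \ref{sec:cohom-adjunctprop}.
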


\paragraph{Invariance under stable isomorphisms.}
\label{sec:invar-under-homot}

Recall that a stabilisation of a DGA is a new DGA with two more generators $b,c$ such that $\partial b=c$. Two DGAs are said to be \textit{stable isomorphic} if they become isomorphic after a sequence of stabilisations. The aim of this paragraph is to prove the following:

\begin{Thm}\label{thm:quasiisomostabletame}
  Let $\mathcal{A}$ and $\mathcal{A}'$ be two stable isomorphic DGAs. Then their augmentation categories are pseudo-equivalent.
\end{Thm}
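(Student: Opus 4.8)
The statement reduces, by a standard induction, to two claims: first, that an isomorphism of semi-free DGAs induces a pseudo-equivalence of augmentation categories; second, that a single stabilisation induces a pseudo-equivalence. Since the composition of stable isomorphisms is handled by Proposition~\ref{prop:cohom-adjunct} (transitivity of pseudo-equivalence), it suffices to treat these two elementary moves. For an isomorphism $f\colon\mathcal{A}\to\mathcal{A}'$, Proposition~\ref{thm:constr-funct-text} gives an $\mathcal{A}_\infty$-functor $\mathcal{F}\colon\mathit{Aug}(\mathcal{A}')\to\mathit{Aug}(\mathcal{A})$, and functoriality (the Proposition following~\ref{thm:constr-funct-text}) applied to $f^{-1}$ gives $\mathcal{G}$ with $\mathcal{G}\circ\mathcal{F}=\mathrm{Id}$ and $\mathcal{F}\circ\mathcal{G}=\mathrm{Id}$ on the nose; in particular $\mathcal{F}$ is cohomologically full and faithful, the object map $F$ is a bijection, and $\mathcal{F}$ is trivially a pseudo-equivalence (one may take $T^1=\mathrm{Id}$, $T^d=0$ otherwise, which is a cohomological adjunction by the remark after Proposition~\ref{sec:cohom-adjunctprop}).

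The substance is therefore the stabilisation move. Let $\mathcal{A}'=\mathcal{A}\langle b,c\rangle$ with $\partial b=c$, $\mathrm{gr}(b)=\mathrm{gr}(c)+1$. First I would check that every augmentation $\varepsilon$ of $\mathcal{A}$ extends uniquely to an augmentation $\hat\varepsilon$ of $\mathcal{A}'$: since $\partial b=c$ forces $\hat\varepsilon(c)=\hat\varepsilon(\partial b)=0$, and $\hat\varepsilon(b)=0$ unless $\mathrm{gr}(b)=0$; in the degree $\mathrm{gr}(b)=0$ case one still sets $\hat\varepsilon(b)=0$ (this is the canonical section used in linearised theory). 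Conversely every augmentation of $\mathcal{A}'$ restricts to one of $\mathcal{A}$, and because $c$ is a boundary and $b$ maps to a boundary-killing generator, the restriction map on augmentations is a bijection onto its image; but in fact, after conjugating by $\phi_{\varepsilon_E}$, the two extra generators $b,c$ form an acyclic "short" summand in every bilinearised complex. Concretely, the inclusion $\mathcal{A}\hookrightarrow\mathcal{A}'$ is a DGA map inducing, via Proposition~\ref{thm:constr-funct-text}, an $\mathcal{A}_\infty$-functor $\mathcal{F}\colon\mathit{Aug}(\mathcal{A})\to\mathit{Aug}(\mathcal{A}')$ (reading off that $F(\varepsilon')=\varepsilon'|_{\mathcal{A}}$ on objects, which by the previous remark is a bijection onto all of $Ob(\mathit{Aug}(\mathcal{A}))$ and whose "inverse" on objects is $\varepsilon\mapsto\hat\varepsilon$).

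The key computation is that $F^1_{\varepsilon_1,\varepsilon_0}\colon C_{\varepsilon_1,\varepsilon_0}(\mathcal{A})\to C_{\hat\varepsilon_1,\hat\varepsilon_0}(\mathcal{A}')$, which is dual to the linear part of the conjugated inclusion, is a quasi-isomorphism: the bilinearised complex $C_{\hat\varepsilon_1,\hat\varepsilon_0}(\mathcal{A}')$ splits (as a complex, over the field $\mathbb{K}$) as $C_{\varepsilon_1,\varepsilon_0}(\mathcal{A})$ plus the acyclic two-term complex $\mathbb{K}\langle b^*,c^*\rangle$ with $\mu^1_{\hat\varepsilon_1,\hat\varepsilon_0}(c^*)=b^*$ coming from $\partial b=c$ (one must verify that no word of $\partial$ on generators of $\mathcal{A}$ involves $b$ or $c$, so these summands do not interact at the level of $\mu^1$ after the change of coordinates; the only new differential entry is precisely $\partial b=c$). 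Hence $\mathcal{F}$ is cohomologically full and faithful. Since $F$ is essentially surjective on objects (indeed bijective), $\mathcal{F}$ is a pseudo-equivalence; explicitly, the pseudo-inverse is built from the DGA retraction $r\colon\mathcal{A}'\to\mathcal{A}$ ($r(b)=r(c)=0$, identity on $\mathcal{A}$), which satisfies $r\circ i=\mathrm{Id}_{\mathcal{A}}$, giving via Proposition~\ref{thm:constr-funct-text} a functor $\mathcal{G}=\mathcal{R}$ with $\mathcal{G}\circ\mathcal{F}=\mathrm{Id}$, and the required cohomological adjunction $T\colon\mathrm{Id}\Rightarrow\mathcal{F}\circ\mathcal{G}$ is obtained by dualising the chain homotopy on $\mathcal{A}'$ between $i\circ r$ and $\mathrm{Id}$ (the homotopy sending $c\mapsto b$), checking it satisfies Equation~\eqref{eq:8}. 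I expect the main obstacle to be the bookkeeping verifying that this dualised homotopy genuinely solves Equation~\eqref{eq:8}—i.e.\ that the higher terms $T^d$ coming from $i\circ r$ differing from $\mathrm{Id}$ only through the $b,c$ summand assemble into a valid cohomological adjunction—rather than any conceptual difficulty; all the geometry/algebra input is already packaged in Propositions~\ref{thm:constr-funct-text}, \ref{sec:cohom-adjunctprop}, and~\ref{prop:cohom-adjunct}.
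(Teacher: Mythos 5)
Your overall strategy coincides with the paper's: reduce via transitivity (Proposition \ref{prop:cohom-adjunct}) to the cases of an isomorphism and of a single stabilisation, handle the isomorphism by the pair of functors induced by $f$ and $f^{-1}$, and handle the stabilisation by the inclusion $i$ and the projection $j=r$ killing $b,c$. Your observation that every bilinearised complex of $\mathcal{A}'$ splits off the acyclic two-term subcomplex $\mathbb{K}\langle b^*,c^*\rangle$ with $\mu^1(c^*)=b^*$ is exactly the reason the induced functors are cohomologically full and faithful, and you make this more explicit than the paper does. (Two small slips that do not affect the argument: Proposition \ref{thm:constr-funct-text} is contravariant, so the inclusion induces a functor $\mathit{Aug}(\mathcal{A}')\to\mathit{Aug}(\mathcal{A})$, not the other way around; and when $gr(b)=0$ the restriction map on augmentations is surjective but \emph{not} injective, since $\varepsilon'(b)$ is a free parameter --- only surjectivity is needed for pseudo-equivalence.)

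The one step that does not work as written is your construction of the cohomological adjunction by ``dualising the chain homotopy $c\mapsto b$ between $i\circ r$ and $\mathrm{Id}$.'' A cohomological adjunction is a degree~$1$ pre-natural transformation whose component $T^1$ is a degree~$0$ chain map inducing an isomorphism in homology (it is the quasi-isomorphism of Yoneda modules); the dual of the homotopy sends $b^*$ to $c^*$ and therefore shifts degree by $|c|-|b|=-1$, so it cannot serve as $T^1$. The paper's construction is simpler and is already implicit in your splitting argument: since the only word of the differential of $\mathcal{A}'$ involving $b$ or $c$ is $\partial b=c$ itself, the values of an augmentation on $b$ and $c$ never enter any structure map $\mu^d$, so the complexes $C_{\varepsilon'_1,\varepsilon'_0}$ and $C_{(I\circ J)(\varepsilon'_1),\varepsilon'_0}$ are canonically \emph{equal}, not merely quasi-isomorphic. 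Under this identification one takes $T^1=\mathrm{Id}$ and $T^d=0$ for $d\neq 1$, which satisfies Equation \eqref{eq:8} by the same computation as for the model adjunction $\mathrm{Id}\to\mathrm{Id}$ noted after Proposition \ref{sec:cohom-adjunctprop}. With that replacement your argument matches the paper's proof.
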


\begin{proof}
  From Proposition \ref{prop:cohom-adjunct}, it is sufficient to prove the theorem first for an isomorphism and second for a stabilisation.
Let $f: \mathcal{A}\rightarrow\mathcal{A}$ be an isomorphism. Consider $\mathcal{F}$ the associated functor as in Proposition \ref{thm:constr-funct-text}, and $\mathcal{G}$ the functor associated to $f^{-1}$. Then $\mathcal{F}\circ \mathcal{G}=Id$ and $\mathcal{G}\circ \mathcal{F}=Id$ imply that the categories $\mathit{Aug}(\mathcal{A};\partial)$ and $\mathit{Aug}(\mathcal{A},\partial')$ are pseudo-equivalent.

Let $\mathcal{A}'$ be a stabilisation of $\mathcal{A}$. The natural inclusion $i:\mathcal{A}\rightarrow\mathcal{A}'$ is a DGA morphism, inducing a functor $\mathcal{I}$. Note that here $I^k=0$ for $k\geq 2$. The map $j:\mathcal{A}'\rightarrow \mathcal{A}$ which sends $b$ and $c$ to $0$ is also a DGA morphism inducing $\mathcal{J}$. Obviously $\mathcal{I}$ and $\mathcal{J}$ are cohomologically full and faithful and $\mathcal{J}\circ \mathcal{I}$ is the identity. Note that as vector spaces the morphism spaces $C^{\varepsilon_1,\varepsilon_2}$ and $C^{\varepsilon_1,I\circ J(\varepsilon_2)}$ are canonically the same and that the augmented differential $\mu^1_{\varepsilon_1,\varepsilon_2}$ corresponds to the one for $I\circ J (\varepsilon_i)$. This implies that the pre-natural transformation defined under this identification by $T^1_{\varepsilon_1,\varepsilon_2}=Id$ and $T^d=0$ if $d\not= 1$ is a homological adjunction from $\mathcal{I}$ to $\mathcal{J}$ as noted in Section \ref{sec:equiv-categ}.
\end{proof}

The set of equivalence classes of augmentations behave nicely with respect to stable isomorphisms of DGAs. Namely

\begin{Thm}
Let $\mathcal{A}(A)$ be a free DGA. Then we have:
  \begin{enumerate}
  \item {Let $f$ be a isomorphism of $\mathcal{A}$ then $\varepsilon_1$ is equivalent to $\varepsilon_2$ iff $\varepsilon_1\circ f$ is equivalent to $\varepsilon_2\circ f$.}
\item {Let $i: \mathcal{A}\rightarrow\mathcal{A}'$ be the inclusion of a DGA into one of his stabilisation. Then $\varepsilon_1$ is equivalent to $\varepsilon_2$ iff $\varepsilon_1\circ i$ is equivalent to $\varepsilon_2\circ i$ and two augmentation $\varepsilon'_1,\varepsilon_2'$ of $\mathcal{A}'$ are equivalent iff their restriction to $\mathcal{A}$ are.}
  \end{enumerate}
\end{Thm}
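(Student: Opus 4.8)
\emph{Strategy.} The plan is, in each part, to carry the data witnessing an equivalence of augmentations (written $\varepsilon_1\sim\varepsilon_2$) — an $\mathcal{A}_\infty$-functor $\mathcal{F}$ with $F(\varepsilon_2)=\varepsilon_1$ together with a cohomological adjunction $T\colon\mathit{Id}\to\mathcal{F}$ — through the $\mathcal{A}_\infty$-functor attached by Proposition~\ref{thm:constr-funct-text} to the DGA map at hand, and to deduce the converse from the (near-)invertibility of that functor. I will use freely that pre- or post-composing a cohomological adjunction with an $\mathcal{A}_\infty$-functor, and composing two of them as in the proof of Proposition~\ref{prop:cohom-adjunct}, again produce cohomological adjunctions; this is part of the $\mathit{nu\textrm{-}fun}$ machinery of \cite{Seidel_Fukaya}. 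For Part~(1), let $\Phi$ be the functor attached to $f$, so $\Phi(\varepsilon)=\varepsilon\circ f$; by the composition property of the construction (and since the identity DGA map induces the identity functor) $\Phi$ is a strict isomorphism of $\mathcal{A}_\infty$-categories with inverse the functor of $f^{-1}$. Given a witness $(\mathcal{F},T)$ of $\varepsilon_1\sim\varepsilon_2$, I would check that $\big(\Phi\circ\mathcal{F}\circ\Phi^{-1},\ \mathcal{L}_{\Phi}\mathcal{R}_{\Phi^{-1}}T\big)$ witnesses $\Phi(\varepsilon_1)\sim\Phi(\varepsilon_2)$: the conjugated functor is cohomologically full and faithful and sends $\Phi(\varepsilon_2)$ to $\Phi(\varepsilon_1)$, while $\Phi\circ\mathit{Id}\circ\Phi^{-1}=\mathit{Id}$ guarantees that the conjugated transformation has source $\mathit{Id}$ and again satisfies \eqref{eq:8}. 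Applying this to $f$ and to $f^{-1}$ (with $\varepsilon_i$ replaced by $\varepsilon_i\circ f$) gives both implications, since $\Phi(\varepsilon_i)=\varepsilon_i\circ f$.

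\emph{Part (2).} Write $\mathcal{A}'=\mathcal{A}\langle b,c\rangle$ with $\partial b=c$, let $i\colon\mathcal{A}\hookrightarrow\mathcal{A}'$ and $j\colon\mathcal{A}'\to\mathcal{A}$ (sending $b,c$ to $0$) be the two DGA maps, and $\mathcal{I},\mathcal{J}$ the associated functors, so that $I(\varepsilon')=\varepsilon'|_{\mathcal{A}}$, both $\mathcal{I}$ and $\mathcal{J}$ are cohomologically full and faithful, $\mathcal{I}\circ\mathcal{J}=\mathit{Id}$, and $\mathcal{I}$ is a pseudo-equivalence — as in the proof of Theorem~\ref{thm:quasiisomostabletame}; by symmetry of pseudo-equivalence (proof of Proposition~\ref{prop:cohom-adjunct}) $\mathcal{J}$ is one too, with adjoint $\mathcal{I}$, so $\mathcal{J}\circ\mathcal{I}$ is cohomologically isotopic to $\mathit{Id}_{\mathit{Aug}(\mathcal{A}')}$. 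The structural input is that in a stabilisation $\partial$ of an old generator is a word in $\mathcal{A}$ and $\partial b=c$, $\partial c=0$, so every operation $\mu^{d}_{\delta_d',\dots,\delta_0'}$ of $\mathit{Aug}(\mathcal{A}')$ depends only on the restrictions $\delta_k'|_{\mathcal{A}}$; concretely $\big(C^{*}(A'),\mu^1_{\delta_1',\delta_0'}\big)$ splits as $\big(C^{*}(A),\mu^1_{I\delta_1',I\delta_0'}\big)\oplus\big(\langle c^{*}\rangle\xrightarrow{\ \cong\ }\langle b^{*}\rangle\big)$, the second summand being killed by all $\mu^{d}$ with $d\ge2$. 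From this I would extract the key point $(\ast)$: \emph{any two augmentations of $\mathcal{A}'$ with the same restriction to $\mathcal{A}$ are equivalent}. Indeed by a degree count they differ only in the value on $b$, and the DGA automorphism fixing $\mathcal{A}$ and $c$ and translating $b$ by the relevant constant induces — once its three conjugating substitutions cancel — an $\mathcal{A}_\infty$-functor that is the identity on every morphism space and intertwines all $\mu^{d}$, so that the trivial pre-natural transformation $\theta^1=\mathit{Id}$, $\theta^{d}=0$, is a cohomological adjunction from it to $\mathit{Id}$ (the verification of \eqref{eq:8} being verbatim that of the $\mathit{Id}\to\mathit{Id}$ adjunction of Section~\ref{sec:equiv-categ}).

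\emph{Part (2), conclusion.} Given $\varepsilon_1'\sim_{\mathcal{A}'}\varepsilon_2'$, use $(\ast)$ and composition of adjunctions to replace $\varepsilon_k'$ by $\mathcal{J}\mathcal{I}\varepsilon_k'=\mathcal{J}(\delta_k)$ with $\delta_k:=I\varepsilon_k'$, obtaining $\mathcal{J}\delta_1\sim_{\mathcal{A}'}\mathcal{J}\delta_2$; conjugating a witness by $\mathcal{I}$ and $\mathcal{J}$ and using $\mathcal{I}\circ\mathcal{J}=\mathit{Id}$ (so the conjugated adjunction has source $\mathit{Id}$) gives $\delta_1\sim_{\mathcal{A}}\delta_2$, i.e.\ $I\varepsilon_1'\sim_{\mathcal{A}} I\varepsilon_2'$. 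Conversely, from $\delta_1\sim_{\mathcal{A}}\delta_2$ with witness $(\mathcal{G},S)$, the functor $\mathcal{J}\circ\mathcal{G}\circ\mathcal{I}$ sends $\mathcal{J}\delta_2$ to $\mathcal{J}\delta_1$, and composing $\mathcal{L}_{\mathcal{J}}\mathcal{R}_{\mathcal{I}}S$ (a cohomological adjunction from $\mathcal{J}\circ\mathcal{G}\circ\mathcal{I}$ to $\mathcal{J}\circ\mathcal{I}$) with a cohomological adjunction $\mathit{Id}\to\mathcal{J}\circ\mathcal{I}$ yields a witness of $\mathcal{J}\delta_1\sim_{\mathcal{A}'}\mathcal{J}\delta_2$; then $(\ast)$ and composition of adjunctions give $\varepsilon_1'\sim_{\mathcal{A}'}\varepsilon_2'$. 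This is the last assertion of the theorem, and the first follows from it because $I\circ\mathcal{J}(\varepsilon)=\varepsilon$.

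\emph{Where the work is.} Granting the structural observation (elementary from the definition of a stabilisation), the proof reduces to bookkeeping: that \eqref{eq:8} — equivalently the Yoneda-module-morphism relation \eqref{eq:7} — is stable under $\mathcal{L}$, $\mathcal{R}$ and the composition of pre-natural transformations, and that the splitting genuinely permits the extension of functors and adjunctions from $\mathit{Aug}(\mathcal{A})$ to $\mathit{Aug}(\mathcal{A}')$. The single delicate step is the ``upward'' implication in Part~(2): since $\mathcal{J}\circ\mathcal{I}\ne\mathit{Id}$ on the nose, a bare conjugation does not suffice, and one must combine $(\ast)$ with the fact that $\mathcal{J}\circ\mathcal{I}$ is cohomologically isotopic to $\mathit{Id}$.
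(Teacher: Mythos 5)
Your proof is correct and follows essentially the same route as the paper's: part (1) by transporting the witness through the functor induced by $f^{-1}$, and part (2) by reducing to the key observation that two degree-$0$ augmentations of $\mathcal{A}'$ agreeing on $\mathcal{A}$ are equivalent, established via the translation automorphism $b\mapsto b-\lambda$ whose induced $\mathcal{A}_\infty$-functor is the identity on morphism spaces, with $T^1=\mathrm{Id}$ serving as the cohomological adjunction. You spell out more of the $\mathcal{L}/\mathcal{R}$ and composition-of-adjunctions bookkeeping that the paper leaves implicit, but the substance is identical.
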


\begin{proof}
The first part is obvious. It is sufficient to compose the natural adjunction with the functor induced by $f^{-1}$.

If the stabilisation is not of degree $0$ the result is obvious, thus we suppose that the stabilisation has degree $0$. One needs to prove that if $\varepsilon'$ is an augmentation of $\mathcal{A}'$ then $\varepsilon_0'$ defined by $\varepsilon_0'(a)= \varepsilon'(a)$ for all generators $a \in A$ and $\varepsilon_0'(b) = \varepsilon_0'(c)= 0$ is equivalent to $\varepsilon'$. To do so we proceed as in the proof of Theorem \ref{thm:quasiisomostabletame}. We define the DGA morphism $f:\mathcal{A}'\rightarrow\mathcal{A}'$ by $f(a)=a$ for all $a\in A$, $f(b)=b-\varepsilon'(b)$ and $f(c)=c$. Obviously $F(\varepsilon')=\varepsilon_0'$. The pre-natural transformation $T$ defined by $T^1=Id$ and $T^d=0$ induces the desired homological natural adjunction. The proof is complete as $\varepsilon_0'=\varepsilon'\vert_{\mathcal{A}}\circ i$.
\end{proof}

\section{Geometric interpretation}

\subsection{Legendrian contact homology}

In this section we consider $\mathcal{J}^1(M):=T^*M\times\mathbb{R}$ the jet space of a $n$-dimensional manifold $M$ with the standard contact structure $\xi=\ker(dz-\theta)$ where $z$ parametrises $\mathbb{R}$  and $\theta=\sum pdq$ is the standard Liouville form on $T^*M$ with $(q,p)$ the conjugate coordinates. A Legendrian submanifold $\Lambda$ of $\mathcal{J}^1(M)$ is a $n$-dimensional submanifold such that $T\Lambda\subset\xi$. We assume here that all Legendrian submanifolds are compact. 

We denote by $\pi$ the canonical projection of $\mathcal{J}^1(M)$ to $T^*M$ and by $\Pi$ the canonical projection of $\mathcal{J}^1(M)$ to $M\times\mathbb{R}$. For a Legendrian submanifold $\Lambda$, $\pi(\Lambda)$ is called the \textit{Lagrangian projection} and $\Pi(\Lambda)$ is called the \textit{front projection}.

A Reeb chord $\gamma$ of $\Lambda$ is trajectory $\gamma: [0,T]\rightarrow \mathcal{J}^1(M)$ of $\frac{\partial}{\partial z}$ such that $T>0$, $\gamma(0)$ and $\gamma(T)$ belong to $\Lambda$. Those are in bijection with double points of the Lagrangian projection. A Reeb chord is \textit{non-degenerate} if the corresponding double point of the Lagrangian projection is transverse and if all Reeb chords are non-degenerate $\Lambda$ is called \textit{chord generic}. We denote by $\mathcal{R}(\Lambda)$ the set of Reeb chords of $\Lambda$ (if $\Lambda$ is chord generic this is a finite set since $\Lambda$ is compact).

If $\Lambda$ is chord-generic there is a grading map $gr:\mathcal{R}(\Lambda)\rightarrow\mathbb{Z}$ defined by the Conley-Zehnder index (see \cite{Ekholm_Contact_Homology}).

We denote by $C(\mathcal{R}(\Lambda))$ the graded $\mathbb{K}$-vector space generated by $\mathcal{R}(\Lambda)$ and $\mathcal{A}(\Lambda)$ the tensor algebra of $C(\mathcal{R}(\Lambda))$ as in Section \ref{sec:algebraic-setup}. In general $\mathbb{K}$ will be $\mathbb{Z}_2$; if $\Lambda$ is relatively spin, one can also consider $\mathbb{Q}$. This algebra is called the Chekanov algebra of $\Lambda$, it is a differential graded algebra with differential $\partial$ defined counting some holomorphic curves in $\mathbb{R}\times \mathcal{J}^1(M)$ (see \cite{LCHgeneral} and \cite{Ekholm_&_Orientation_Homology}). 

Let $\widetilde{J}$ be an almost complex structure on $T^*M$ compatible with $-d\theta$. To $\widetilde{J}$ we associate an almost complex structure $J$ on $\mathbb{R}\times \mathcal{J}^1(M)$ by the following. The differential of the projection $\pi$ induces an isomorphism between $\xi\vert_{(q,p,z)}$ and $T_{(q,p)}(T^*M)$; we set $J\vert_\xi=\widetilde{J}$ under this identification. Finally we set $J\frac{\partial}{\partial t}$ to be equal to $\frac{\partial}{\partial z}$. Such an almost complex structure will be referred to as a \textit{compatible} almost complex structure.  

We denote by $D_k$ the $2$-dimensional closed unit disk with the $(k+1)$-th roots of unity removed.  For a complex structure $j$ on $D_k$ we choose holomorphic coordinates $[T,\infty]\times [0,1]$ near $1$ and $[-\infty,-T]\times [0,1]$ near the other punctures. A map $u: (D_k,j)\rightarrow \mathbb{R}\times\mathcal{J}^1(M)$ is said to be holomorphic if $du\circ j=J\circ du$. 

If $u$ is a holomorphic map then $u\vert_{[T,\infty]\times [0,1]}$ decomposes as a map $(a,v,f)$ with $a:[T,\infty]\times [0,1]\rightarrow \mathbb{R}$, $v:[T,\infty]\times [0,1]\rightarrow T^*M$ and $f:[T,\infty]\times [0,1]\rightarrow \mathbb{R}$. Suppose $u(\partial{D_{k}})\subset \mathbb{R}\times \overline{\Lambda}_n^t$ then for a Reeb chord $\gamma$ we say that $u$ has a positive asymptotic $\gamma$ at $1$ if $v(z)\rightarrow \pi(\gamma)$ and $a(z)\rightarrow\infty$ when $z\rightarrow 1$. 
Similarly, for a root of unity $z_0$ we say that $u$ has a negative asymptotic $\gamma$ at $z_0$ if $v(z)\rightarrow \pi(\gamma)$ and $a(z)\rightarrow -\infty$ when $z\rightarrow z_0$.

For Reeb chords $\gamma^+$, $\gamma_1,\ldots,\gamma_k$ we denote by $\mathcal{M}(\gamma^+,\gamma_1,\ldots,\gamma_k)$ the moduli space of holomorphic maps from $(D_k,j)$ to $\mathbb{R}\times \mathcal{J}^1(M)$ with boundary on $\mathbb{R}\times\Lambda$, positive asymptotics $\gamma^+$ and negative asymptotics $\gamma_1,\ldots, \gamma_k$ for all $j$ modulo biholomorphism $(D_k,j)\simeq (D_{k},j')$ and translation in the $\mathbb{R}$ direction in $\mathbb{R}\times \mathcal{J}^1(M)$.

The differential $\partial$ on $\mathcal{A}(\Lambda)$ is defined by
$$
\partial \gamma^+ =  \sum_{\stackrel{\gamma_1, \ldots, \gamma_k}{gr(\gamma_1\ldots \gamma_k) = gr(\gamma^+)-1}} 
\# \mathcal{M}(\gamma^+,\gamma_1,\ldots,\gamma_k) \cdot \gamma_1 \ldots \gamma_k 
$$
on generators and is extended to $\mathcal{A}(\Lambda)$ by linearity and the Leibniz rule.

The homology of the DGA $(\mathcal{A}(\Lambda), \partial)$  is called the \textit{Legendrian contact homology} of $\Lambda$ and is denoted by $LCH(\Lambda)$.

As the Chekanov algebra (see \cite{Chekanov_DGA_Legendrian} and \cite{Ekholm_Contact_Homology}) of a Legendrian submanifold $\Lambda$ is a semi-free DGA, the previous construction applies in this case to give the augmentation category for a Legendrian submanifold. We denote this category by $\mathit{Aug}(\Lambda)$.  In the next section, we will show that this augmentation category can be extracted from the geometrical data of the differential of the Chekanov algebra of the $n$-copy Legendrian link. 

One can choose more sophisticated coefficient rings to define Legendrian contact homology, for instance one can keep track of the homology class of the curves defining the differential using the group ring $\mathbb{K}[H_1(\Lambda)]$. Even though this is not a field, one can still carry out the previous construction considering all the chain complexes, tensor products and dual spaces over $\mathbb{K}$ (the coefficient from $H_1(\Lambda)$ inducing a decomposition of the vector spaces).

\subsection{The augmentation category of Legendrian submanifolds}
\label{sec:bilin-legendr-cont}

\paragraph{The augmentation category \boldmath$\mathit{Aug}(\Lambda)$.}
\label{sec:augm-mathc-1}

From Section \ref{sec:algebraic-setup}, we deduce that there is an $\mathcal{A}_\infty$-category associated to $\mathcal{A}(\Lambda)$ we denote this category by $\mathit{Aug}(\Lambda)$. Note that from Theorem \ref{thm:equiv} the curve contributing to the products $\mu^n$ are the curves shown in Figure \ref{fig:comp}.

The homologies of the morphisms $(C(\mathcal{R}(\Lambda)),d^{\varepsilon_0,\varepsilon_1})$ and $(C(\mathcal{R}(L)),\mu^1_{\varepsilon_1,\varepsilon_0})$ are called the \textit{bilinearised Legendrian contact homology and cohomology group}, and are denoted by $LCH^{\varepsilon_0,\varepsilon_1}(L)$ and $LCH_{\varepsilon_1,\varepsilon_0}(L)$.

Recall that Legendrian isotopies induce stable tame isomorphisms (and thus stable isomorphisms) of the Chekanov algebra (see \cite{Chekanov_DGA_Legendrian}, \cite{Ekholm_Contact_Homology} and \cite{LCHgeneral}). Thus Theorem \ref{thm:quasiisomostabletame} implies Theorems \ref{thm:quasiequilegisot} and \ref{thm:legisotinv}. Theorem \ref{thm:equiclassisot} is itself a consequence of Theorem \ref{thm:equivaug} which also implies Theorem \ref{thm:equivaugisom}.

We will now describe how the bilinearised differential appears naturally when considering the Chekanov algebra of the two copy Legendrian link associated to $\Lambda$.

\paragraph{The \boldmath$n$-copy Legendrian link.}
\label{sec:n-copy-legendrian}

Let $\Lambda$ be a Legendrian submanifold of $\mathcal{J}^1(M)$. Choose Morse functions $f_i, i=2, \ldots, n$ on $\Lambda$ such that, for each $i \neq j$, the critical points of $f_i$ and $f_j$ are disjoint and $f_i-f_j$ is a Morse function. We also set $f_1=0$ in order to simplify notations.  
We denote by $\overline{\Lambda}_n$ the Legendrian link $\Lambda_1\sqcup \Lambda_2\sqcup\ldots \sqcup \Lambda_n$ where $\Lambda_i$ is a perturbation of 
$\Lambda +i\varepsilon \frac{\partial}{\partial z}$ by the $1$-jet of the function $\varepsilon f_i$.

\paragraph{Reeb chords of \boldmath$\Lambda_n$.}
\label{sec:reeb-chords-}

Let $\mathcal{R}(\Lambda)$ be the set of Reeb chords of $\Lambda$. Reeb chords of $\overline{\Lambda}_n$ are of three different types:
\begin{enumerate}
\item {Reeb chords of $\Lambda_i$ for each $i\in \{1,\ldots, n\}$ called \textit{pure chords}.}
\item {Critical points of $f_i-f_j$ for each $i\not= j$ called \textit{continuum chords}.}
\item {Long (i.e. not continuum) Reeb chords from $\Lambda_i$ to $\Lambda_j$ for each $i\not= j$ called 
\textit{mixed chords}.}
\end{enumerate}

For a fixed $i$, the set of chords of type $1$ of $\Lambda_i$ are in bijection with $\mathcal{R}(\Lambda)$; for $\gamma\in \mathcal{R}(\Lambda)$ we denote $\gamma_{i,i}$ the corresponding chord of $\Lambda_i$. Similarly, for each $i\not= j$, chords of type $2$ are in bijection with $\mathcal{R}(\Lambda)$ and for $\gamma\in\mathcal{R}(\Lambda)$ we denote by $\gamma_{i,j}$ the 
corresponding chord from $\Lambda_i$ to $\Lambda_j$.

\begin{figure}[ht!]
\labellist
\small\hair 2pt
\pinlabel {$\gamma_{1,1}$} [tl] at 186 23
\pinlabel {$\gamma_{2,2}$} [tl] at 194 137
\pinlabel {$\gamma_{1,2}$} [tl] at 152 135
\pinlabel {$\gamma_{2,1}$} [tl] at 146 79
\endlabellist
  \centering
  \includegraphics[height=5cm]{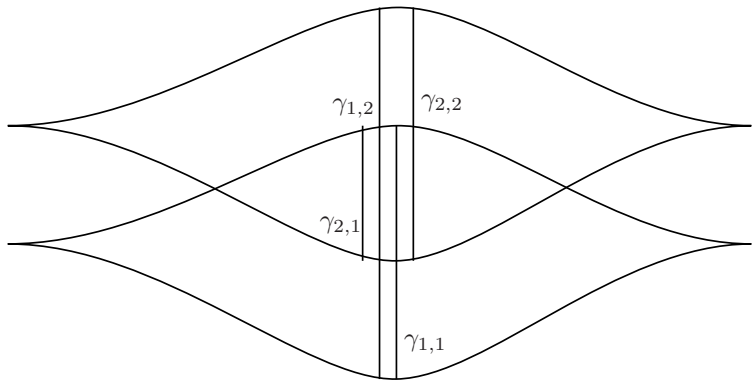}
  \caption{Front projection of the $2$-copy Legendrian unknot before perturbation.}
  \label{fig:front}
\end{figure}

\begin{figure}[ht!]
\labellist
\small\hair 2pt
\pinlabel {$\gamma_{1,1}$} [tl] at 157 103
\pinlabel {$\gamma_{2,2}$} [tl] at 157 208
\pinlabel {$\gamma_{1,2}$} [tl] at 110 155
\pinlabel {$\gamma_{2,1}$} [tl] at 208 155
\pinlabel {$M$} [tl] at 356 155
\pinlabel {$m$} [tl] at 24 155
\endlabellist
  \centering
  \includegraphics[height=5cm]{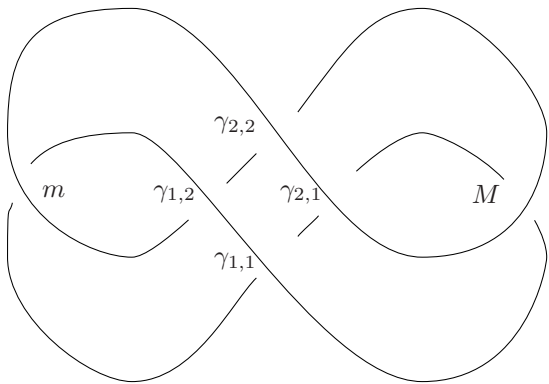}
  \caption{Lagrangian projection of the $2$-copy Legendrian unknot after perturbation.}
  \label{fig:lag}
\end{figure}

\paragraph{The differential.}
\label{sec:differential}
Let $I_c$ be the two-sided ideal of $\mathcal{A}(\overline{\Lambda}_n)$ generated by continuum
chords, i.e. Reeb chords coming from critical points of the functions $f_i-f_j$. Then we have:

\begin{Prop}\label{sec:differential-1propideal}
  For sufficiently small $\varepsilon>0$, $\partial(I_c)\subset I_c$.
\end{Prop}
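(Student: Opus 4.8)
The plan is to argue that the only way a continuum chord can fail to be carried to $I_c$ is through a holomorphic disk counted by $\partial$ whose positive puncture is at a continuum chord $\gamma_{i,j}$ but all of whose negative punctures are at pure or mixed chords, and to rule such disks out by an action (energy) estimate for small $\varepsilon$. First I would recall that the generators of $\mathcal{A}(\overline{\Lambda}_n)$ split into the three families listed above — pure chords $\gamma_{i,i}$, continuum chords $\gamma_{i,j}$ ($i\neq j$), and mixed chords — and that, since $I_c$ is the two-sided ideal generated by the continuum chords, a monomial $w = \gamma^{(1)}\cdots\gamma^{(k)}$ lies in $I_c$ as soon as at least one of its factors $\gamma^{(l)}$ is a continuum chord. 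Thus it suffices to show: for every continuum generator $\gamma_{i,j}$, every word appearing in $\partial\gamma_{i,j}$ contains a continuum factor. Equivalently, any rigid holomorphic disk $u$ with positive asymptotic $\gamma_{i,j}$ has at least one negative asymptotic that is a continuum chord.

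The key mechanism is the smallness of the action of continuum chords. The chord $\gamma_{i,j}$ arising from a critical point of $\varepsilon(f_i-f_j)$ has action (the $z$-difference of its endpoints, equivalently the symplectic area it bounds in the Lagrangian projection picture) of order $\varepsilon$ — its action tends to $0$ as $\varepsilon \to 0$. By contrast, the pure chords $\gamma_{i,i}$ correspond to the Reeb chords of $\Lambda$ itself, and the mixed (long) chords $\gamma_{i,j}$ limit onto the Reeb chords of $\Lambda$ as $\varepsilon\to 0$, so all of these have action bounded below by $c_0>0$, a constant independent of $\varepsilon$, once $\varepsilon$ is small. Since the action of a rigid disk equals the action of its positive asymptotic minus the sum of the actions of its negative asymptotics and is nonnegative (positivity of energy for $J$-holomorphic curves with the given Lagrangian boundary condition), a disk with positive end $\gamma_{i,j}$ has total negative action at most $\mathrm{action}(\gamma_{i,j}) = O(\varepsilon) < c_0$ for $\varepsilon$ small. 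Hence none of its negative asymptotics can be a pure or mixed chord; therefore every negative asymptotic is a continuum chord, and in particular the word $\gamma_1\cdots\gamma_k$ lies in $I_c$. This establishes $\partial(\gamma_{i,j}) \in I_c$ for every continuum generator, and since $I_c$ is a two-sided ideal and $\partial$ satisfies the Leibniz rule, $\partial(I_c)\subset I_c$ follows.

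The main obstacle is making the action estimate rigorous and uniform: one must check that the continuum chords really do have $O(\varepsilon)$ action (this is where the $1$-jet perturbation by $\varepsilon f_i$ and the hypothesis that the $f_i-f_j$ are Morse enter — the critical points of $\varepsilon(f_i-f_j)$ are exactly the continuum chords, with $z$-height proportional to the corresponding critical value, scaled by $\varepsilon$), and that the pure and mixed chords have action bounded uniformly away from $0$ as $\varepsilon\to 0$ (these converge to the chords of $\Lambda$, which are finitely many and all of positive action since $\Lambda$ is chord-generic and compact). Combining a uniform lower bound $c_0$ for the latter with the $O(\varepsilon)$ bound for the former gives a threshold $\varepsilon_0$ below which the argument runs; one also needs that the moduli space $\mathcal{M}(\gamma_{i,j};\gamma_1,\dots,\gamma_k)$ is still transversally cut out (which is part of the setup for $\overline{\Lambda}_n$ being chord-generic after a small perturbation of $J$), so that $\partial$ is actually defined. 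Everything else — the Leibniz rule propagating the conclusion from generators to the whole ideal — is formal.
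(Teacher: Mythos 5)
Your argument is correct and is essentially the paper's own proof: the paper phrases the energy estimate as ``$\partial$ decreases the length of chords'', with every continuum chord having length at most $\varepsilon(n-1+F)$ (where $F$ bounds the $f_i-f_j$) and every pure or mixed chord having length at least $\ell>0$ uniformly in $\varepsilon$, so that for $\varepsilon<\ell/(n-1+F)$ the boundary of a continuum generator lies in the subalgebra generated by continuum chords, and the Leibniz rule finishes the argument. The one point both you and the paper leave implicit is that $\partial$ of a continuum chord has no constant term (the empty word is not in $I_c$, and a disk with a single positive puncture is not excluded by positivity of energy alone); this is ruled out by the Morse--Bott description of the small-action disks as gradient trajectories of $f_i-f_j$.
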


\begin{proof}
  Let $\ell$ be the minimal length of all Reeb chords of type $1$ and $2$ and $F$ be the maximum 
  of all maxima of the functions $f_i-f_j$ so that the length of any chord of type $3$ is smaller than $\varepsilon (n-1 + F)$. Set $\varepsilon<\frac{\ell}{n-1+F}$. As $\partial$ decreases the length of chords, we get that for any chord of type $3$ its boundary is in the algebra generated by chords of type $3$. The Leibniz rule thus implies that $\partial (I_c)\subset I_c$.
\end{proof}

We denote by $\mathcal{A}_n(\Lambda)$ the quotient $\mathcal{A}(\overline{\Lambda}_n)/I_c$. Then the differential on $\mathcal{A}(\overline{\Lambda}_n)$ descends to a differential $ \partial_n$ on $\mathcal{A}_n(\Lambda)$. Note that  $\mathcal{A}_n(\Lambda)$ is isomorphic to the tensor algebra over $\mathbb{K}$ of $C_n(\Lambda)$ where $C_n(\Lambda)$ is the free $\mathbb{K}$-vector space generated by Reeb chords of $\overline{\Lambda}_n$ of type $1$ and $2$.

We extend the vector space structure of $C_n(\Lambda)$ to a $\mathbb{K}_n$-module structure by setting $e_i\cdot \gamma_{j,k}\cdot e_l=\delta_{ij}\cdot \delta_{kl} \cdot \gamma_{j,k}$.  

\paragraph{Structure of \boldmath$\partial_n$.}
\label{sec:structure-delta_n}

Consider now the subalgebra of $\mathcal{A}_n(\Lambda)$ linearly generated by $\gamma_{i_1,j_1}\otimes \ldots\otimes\gamma_{i_k,j_k}$ where $(I=(i_1, \ldots, i_k),J=(j_1, \ldots, j_k))$ is a composable pair of multi-indices. As a $\mathbb{K}_n$-algebra it is isomorphic to the algebra $\mathcal{A}_n(A)$ of Section \ref{sec:n-copy-algebra} with $A = \mathcal{R}(\Lambda)$. As the differential $\partial$ counts connected holomorphic curves it is obvious that 
$\partial_n(\mathcal{A}_n(\mathcal{R}(\Lambda)))\subset \mathcal{A}_n(\mathcal{R}(\Lambda))$.

\begin{Thm}\label{thm:equiv}\label{thm:dn=d12}
For sufficiently small $\varepsilon>0$, $\partial_n\vert_{\mathcal{A}_n(\mathcal{R}(\Lambda))}$ is the differential on the $n$-copy algebra of $(\mathcal{A}(\Lambda),\partial)$ as in Section \ref{sec:n-copy-algebra}.
\end{Thm}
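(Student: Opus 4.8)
The plan is to compare the two differentials on $\mathcal{A}_n(\mathcal{R}(\Lambda))$ generator by generator, showing that for $\varepsilon > 0$ small enough the holomorphic curves of $\overline{\Lambda}_n$ with all asymptotics of type $1$ or $2$ are in canonical bijection with the curves of $\Lambda$, and that under this bijection the word read off in $\partial_n$ matches the word prescribed by the algebraic construction $\partial_n(e_{i}\cdot a\cdot e_{j}) = e_{i}\cdot\partial(a)\cdot e_{j}$. First I would set up the length/energy bookkeeping: as in Proposition \ref{sec:differential-1propideal}, a chord $\gamma_{i,i}$ of type $1$ has length $\approx \ell(\gamma) + O(\varepsilon)$, a continuum chord $\gamma_{i,j}$ of type $2$ has length $O(\varepsilon)$, and a mixed chord of type $3$ has length in $(\varepsilon\cdot\mathrm{const}, \varepsilon(n-1+F))$; since $\partial$ strictly decreases length (action), for $\varepsilon$ small the differential of a chord of type $1$ can only involve words whose total length is slightly less than $\ell(\gamma)$, hence such words contain at most one chord of type $1$ and no chord of type $3$, i.e. they are words in $\mathcal{A}_n(\mathcal{R}(\Lambda))$ of the form (continuum chords)$\cdot\gamma'_{k,k}\cdot$(continuum chords) or words consisting entirely of continuum chords. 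Similarly $\partial$ of a continuum chord consists only of words in continuum chords. This already shows the combinatorial shape of $\partial_n$ matches the $\mathbb{K}_n$-module decomposition predicted by Section \ref{sec:n-copy-algebra}.

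The geometric heart is a compactness/gluing argument: I would argue that as $\varepsilon\to 0$ the rigid holomorphic disks of $\overline{\Lambda}_n$ that contribute to $\partial_n$ converge to rigid holomorphic disks of $\Lambda$ (the copies all collapse onto $\Lambda$), and conversely every rigid disk of $\Lambda$ with a positive puncture at $\gamma$ and negative punctures at $\gamma^{(1)},\ldots,\gamma^{(k)}$ gives rise, for small $\varepsilon$, to exactly the rigid disks of $\overline{\Lambda}_n$ obtained by choosing for each boundary arc which copy $\Lambda_i$ it should lie on — the constraint being that consecutive boundary arcs meeting at a negative puncture must use indices making the corresponding chord a genuine (type $1$ or type $2$) chord, which is precisely the composability condition $j_\ell = i_{\ell+1}$. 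The Morse-theoretic input is that a short boundary arc running between $\Lambda_i$ and $\Lambda_j$ and limiting to a continuum chord is, after rescaling, a (half-infinite) gradient flow line of $f_i - f_j$; the assumption that $f_i - f_j$ is Morse with disjoint critical points guarantees transversality of these pieces. This is essentially the analysis underlying the $2$-copy constructions in the literature (as used in \cite{Duality_EkholmetAl} and \cite{Ekholm_FloerlagCOnt}), adapted to $n$ copies; I would either cite that machinery or sketch the neck-stretching/gluing argument, noting that the new curves contributing continuum chords at negative ends are counted with coefficient $1$ and with the correct sign for the orientation conventions.

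Once the bijection of moduli spaces is established, the conclusion is bookkeeping: a rigid disk $u$ of $\Lambda$ contributing the word $\gamma^{(1)}\ldots\gamma^{(k)}$ to $\partial\gamma$ gives, for each choice of a composable index pair $(I,J)$ with $i_1$ and $j_k$ prescribed by the boundary components adjacent to the positive puncture $\gamma_{i_1,j_k}$, exactly one rigid disk of $\overline{\Lambda}_n$ contributing $\gamma^{(1)}_{i_1,j_1}\ldots\gamma^{(k)}_{i_k,j_k}$ — and this is verbatim the formula that "any word of length $k$ in $\partial(a)$ appears as a sum in $\partial_n(a)$ over all composable pairs of multi-indices of length $k$'' from Section \ref{sec:n-copy-algebra}. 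Hence $\partial_n|_{\mathcal{A}_n(\mathcal{R}(\Lambda))} = \partial_n$ in the algebraic sense, and extending by the Leibniz rule (which both differentials obey) finishes the proof.

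The main obstacle I expect is the transversality and gluing step: one must establish, uniformly for small $\varepsilon$, that the moduli spaces of $\overline{\Lambda}_n$-disks with only type $1$ and type $2$ asymptotics are regular, that no disk escapes to a mixed chord or bubbles off in the limit, and that the degenerate (rescaled) limit cleanly factors into a $\Lambda$-disk glued to trivial/Morse flow pieces with the expected signs. This is a standard but genuinely analytical "neck-stretching'' argument; rather than reprove it I would invoke the corresponding results already in the literature on linked Legendrian copies and indicate how the index count and composability condition drop out, leaving the purely algebraic identification to the formula recalled in Section \ref{sec:n-copy-algebra}.
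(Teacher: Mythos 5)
Your overall conclusion (one rigid disk of $\overline{\Lambda}_n$ for each composable labelling of the boundary arcs of each rigid disk of $\Lambda$) is the right one, but your route to it is considerably heavier than the paper's, and your first paragraph contains claims that are actually false. The paper does \emph{not} run a neck-stretching/gluing argument with gradient flow lines of $f_i-f_j$: since the theorem only concerns $\partial_n\vert_{\mathcal{A}_n(\mathcal{R}(\Lambda))}$, i.e.\ the differential \emph{after} quotienting by the ideal of continuum chords, no disk under consideration has a continuum chord as an asymptotic, so no Morse-flow-line piece ever appears and no rescaling analysis is needed. Instead the paper starts from the unperturbed link $\overline{\Lambda}_n^0$ (pure translates in the $z$-direction), observes that all its components have literally the \emph{same} Lagrangian projection as $\Lambda$, and uses the standard correspondence between symplectization disks and disks in $T^*M$ with boundary on $\pi(\Lambda)$ to conclude that the labelled moduli space $\mathcal{M}^I(\gamma^+,\gamma_1,\ldots,\gamma_k)$ is independent of the labelling $I$ and canonically identified with the corresponding moduli space for $\Lambda$; a small Morse perturbation ($t\in(0,\varepsilon)$) then leaves these moduli spaces unchanged because all non-continuum chords are already nondegenerate at $t=0$. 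Your approach would presumably also work, but it imports the full $n$-copy/Morse--Bott gluing machinery precisely where the paper's set-up was designed to avoid it; if you do go that route you must justify the transversality and gluing uniformly in $\varepsilon$, which is a nontrivial analytical debt.

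Separately, your length bookkeeping is wrong in two places. A mixed (long) chord from $\Lambda_i$ to $\Lambda_j$ has length close to $\ell(\gamma)$ for the corresponding chord $\gamma$ of $\Lambda$, not $O(\varepsilon)$ --- only the continuum chords are short. And the assertion that a word in $\partial(\gamma_{i,i})$ ``contains at most one chord of type $1$ and no chord of type $3$'' is false: the action inequality only bounds the \emph{total} length of the output word by $\ell(\gamma)$, which permits arbitrarily many long chords of smaller length (for the trefoil, $\partial a_1$ contains $b_1b_2b_3$, a word of three pure chords). Had this claim been true, the bilinearised differential would be forced to be linear in the non-continuum chords, contradicting the paper's own examples. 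Fortunately this paragraph is not load-bearing: the fact that $\partial_n$ preserves $\mathcal{A}_n(\mathcal{R}(\Lambda))$ follows from connectedness of the disks (composability of the boundary labels) together with the definition of the quotient by $I_c$, and your later paragraphs describe the correct count. But as written the first paragraph contradicts the rest of your argument and should be removed or replaced by the connectivity observation.
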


\begin{proof}
Consider $\overline{\Lambda}_n^0$ the $n$-copy Legendrian link consisting of $n$-linked copies of $\Lambda$ translated in the $z$ direction, without perturbing them by Morse functions. Reeb chords of this link are not all non-degenerate as any point of $\Lambda_i$ belongs to a chord running from $\Lambda_i$ to $\Lambda_j$ for all $j\not= i$. However if chords of $\Lambda$ are non-degenerate then chords of $\overline{\Lambda}_n^0$ are Morse-Bott. Similarly to \cite{Bourgeois_Morse_Bott}, one can consider the Morse-Bott approach to Legendrian contact homology using a Morse-Bott generic compatible almost complex structure $J$ on $\mathbb{R}\times M$.

For sufficiently small $\varepsilon>0$, $J$ will be a generic almost complex structure in the sense of \cite{LCHgeneral} for the link $\overline{\Lambda}_n^t$ obtained by perturbing $\overline{\Lambda}^0_n$ using the Morse function $t\cdot f_i$, for all $t \in (0,\epsilon)$. All chords of $\overline{\Lambda}^t_n$ are described as before and are therefore in natural bijection for all $t \in (0,\epsilon)$. When $t=0$, all non-continuum chords are in bijection with non-continuum chords of $\overline{\Lambda}^t_n$ for $t>0$ and are non-degenerate. Thus, as they are all in natural bijection, we drop the index $t$ and denote them by the same letter.

Let $\mathcal{M}_{t}(\gamma^+,\gamma_1,\ldots,\gamma_k)$ be the moduli space of holomorphic
maps on $D_k$ with boundary on $\mathbb{R} \times \overline{\Lambda}_n^t$. We label the connected components of $\partial(D_k)$ by $C_0, \ldots, C_{k+1}$ starting from the arc connecting $1$ to $e^{2\pi i/(k+1)}$.
For a multi-index $I=(i_0,i_1,\ldots, i_{k+1})$ we denote by $\mathcal{M}^I_{t}(\gamma^+,\gamma_1,\cdots,\gamma_k)$ the moduli space consisting of holomorphic maps such that 
the connected component $C_j$ of $\partial(D_k)$ is mapped to $\mathbb{R} \times \Lambda^t_{i_j}$, 
for $j=0, \ldots, k+1$.  

If $\varepsilon >0$ is small enough, if $\gamma^+,\gamma_1,\ldots,\gamma_k$ are non-continuum chords, then the moduli spaces $\mathcal{M}^I_{t}(\gamma^+,\gamma_1,\ldots,\gamma_k)$ are diffeomorphic for all $t \in [0, \varepsilon)$, since all non-continuum chords are non-degenerate.

Recall that elements of $\mathcal{M}^I(\gamma^+,\gamma_1,\cdots,\gamma_j)$ are in bijection with holomorphic curves in $T^*M$ with boundary on the Lagrangian projection of $\Lambda_i$ with punctures to the double points corresponding to the Reeb chords (see \cite{1024.57014}). As the Lagrangian projection of $\Lambda+i\varepsilon \frac{\partial}{\partial z}$ is equal to the Lagrangian projection of $\Lambda$, the moduli space of holomorphic curves $\mathcal{M}^I(\gamma^+,\gamma_1,\ldots,\gamma_k)$ and $\mathcal{M}^J(\gamma^+,\gamma_1,\ldots,\gamma_k)$ are in bijective correspondence for any multi-indices $I$ and $J$ of length $k+2$.
\end{proof}

For a $n$-tuple of augmentations $E=(\varepsilon_1,\ldots, \varepsilon_n)$ of $\mathcal{A}(\Lambda)$ over $\mathbb{K}$ one gets an augmentation $\varepsilon_E$ of $\mathcal{A}(\overline{\Lambda}_n)$ over $\mathbb{K}_n$ setting $\varepsilon_E(\gamma_{i,i})=\varepsilon_i(\gamma) \cdot e_i$ for any
$\gamma \in \mathcal{R}(\Lambda)$ and sending every mixed chord to $0$ similarly as done in Section \ref{sec:augm-mathc}. The fact that this is an augmentation follows from Theorem \ref{thm:dn=d12}, Proposition \ref{sec:differential-1propideal} and applying Proposition \ref{sec:augm-mathc-prop} to the quotient algebra. 

For $n=2$, the augmented complex decomposes as $C^{1,1}\oplus C^{2,2}\oplus C^{1,2}\oplus C^{2,1}\oplus C^{1,2}(f)$ and from the fact that only the pure chords are augmented we get that $C^{i,i}$ are subcomplexes, the corresponding differential is the standard augmented differential. $C^{2,1}$ is also a subcomplex, it follows from Theorem \ref{thm:dn=d12} that its differential corresponds to the bilinearised differential $d^{\varepsilon_1,\varepsilon_2}$. Finally $C^{1,2}\oplus C^{1,2}(f)$ is a subcomplex with differential
$$
\left(\begin{array}{cc}
  d^{\varepsilon_1,\varepsilon_2} & 0\\
\rho & d^f
\end{array}\right) .
$$

The differential $d^{\varepsilon_1,\varepsilon_2}$ is the bilinearised differential and the complex $(C(f),d^f)$ is the Morse complex of $f_2$. This can be seen by a standard Morse-Bott argument and follows for example from \cite[Theorem 3.6]{Duality_EkholmetAl}. Finally $\rho$ is a chain map and thus the complex $C^{1,2}\oplus C^{1,2}(\rho)$ is the cone over $\rho$, we denote it $C(\rho)$.

\subsection{Duality exact sequence}
\label{sec:prod-struct-adn}

A Legendrian submanifold $\Lambda$ of $\mathcal{J}^1(M)$ is \textit{horizontally displaceable} if there exists a Hamiltonian diffeomorphism $\phi$ of $T^*M$ such that $\phi(\pi(\Lambda))\cap \pi(\Lambda)=\emptyset$. Note that if $M$ is non-compact any compact Legendrian submanifold is horizontally displaceable.

In \cite{Duality_EkholmetAl}, it is proved that linearised Legendrian contact homology and cohomology of horizontally displaceable Legendrian submanifolds are parts of an exact sequence (which generalise the duality of \cite{Sabloff_Duality}):

\begin{equation}
\cdots \rightarrow H_{k+1}(\Lambda)\rightarrow LCH_\varepsilon^{n-k-1}(\Lambda)\rightarrow LCH^\varepsilon_k(\Lambda)\rightarrow H_k(\Lambda)\rightarrow\cdots \label{eq:10}
\end{equation}
called the duality exact sequence.

The same considerations allow us to prove Theorem \ref{thm:duality}.

\begin{proof}[Proof of Theorem \ref{thm:duality}]
  This is almost a direct application of the arguments in \cite{Duality_EkholmetAl} applied to the 
  $4$-component link $2 \overline{\Lambda}_2$ which consists of two copies $\overline{\Lambda}_2$ and $\widetilde{\Lambda}_2$ of $\overline{\Lambda}_2$ far apart in the $z$-direction (one perturbs the second copy using Morse functions in the same manner as in the previous section).

First note that the discussion at the end of the previous section implies that there is an exact sequence induced by $\rho$:

$$\cdots\rightarrow H_{k+1}(\Lambda)\xrightarrow{i_*} H(C(\rho))\xrightarrow{\pi_*} LCH^{\varepsilon_1,\varepsilon_2}_k(\Lambda)\xrightarrow{\rho}\cdots$$

It remains to identify $H(C(\rho))$ with the bilinearised Legendrian cohomology. 
In order to do so, one must investigate how the exact sequence is built in \cite{Duality_EkholmetAl}. One closely follows the notation from here. The Reeb chords of $2 \overline{\Lambda}_2$ are of $4$ types:
\begin{enumerate}
\item {Chords going from $\widetilde{\Lambda}_2$ to itself denoted by $q^0$ and their corresponding chords on $\widetilde{\Lambda}_2$ denoted by $\widetilde{q^0}$.}
\item {Chords going from $\overline{\Lambda}_2$ to $\widetilde{\Lambda_2}$ denoted by $q^1$ whose positive and negative ends are nearby the positive and negative ends of the corresponding chords on $\overline{\Lambda}_2$.}
\item {Chords going from $\overline{\Lambda}_2$ to $\widetilde{\Lambda_2}$ denoted by $p^1$ whose positive are nearby the negative ends of the corresponding chords on $\widetilde{\Lambda_2}$ and negative ends are nearby the positive ends of the corresponding chords on $\overline{\Lambda}_2$.}
\item {Critical points of the Morse functions on $\Lambda$ used to perturb $\widetilde{\Lambda}_2$.}
\end{enumerate}

The corresponding vector space complexes are denoted $Q^0$, $Q^1$, $P^1$ and $C^1$. Since $\overline{\Lambda}_2$ is itself a two-copy Legendrian link each of those vector spaces decomposes further. We label each of the summand by the positive and negative ends of the chords. For instance $Q^1=Q^1_{11}\oplus Q^1_{22}\oplus Q^1_{12}\oplus Q^1_{21}\oplus Q^1_{f_1}$ where the differential splits as in the previous section. The vector space $Q^1\oplus C^1$ is a subcomplex for the linearised differential given by the augmentation $\varepsilon_{12}$ of $2\overline{\Lambda}_2$ given by $\varepsilon_1$ and $\varepsilon_2$. This subcomplex is shown to have the structure of a mapping cone of a map $\eta: (Q^1,d^{\varepsilon_{12}})\rightarrow (C^f,d^f)$. With respect to the decomposition of $Q^1$ this map turns out to be $\eta=\rho_{11}\oplus \rho_{22}\oplus 0 \oplus 0\oplus 0$; in other terms the vector space $Q^1_{12}\oplus Q^1_{21}\oplus Q^1_{f_1}$ is a subcomplex. 

The long exact sequence of equation \eqref{eq:10} is the one given by this mapping cone, this implies that the exact sequence for the link $\overline{\Lambda}_2$:
\begin{align}
  \cdots\rightarrow &H_{k+1}(\Lambda)\oplus H_{k+1}(\Lambda)\rightarrow\nonumber\\
 &LCH_{\varepsilon_1}^{n-k-1}(\Lambda)\oplus LCH_{\varepsilon_2}^{n-k-1}(\Lambda)\oplus LCH_{\varepsilon_2,\varepsilon_1}^{n-k-1}(\Lambda)\oplus
  H^{n-k-1}(C(\rho^*))\rightarrow \nonumber\\
&LCH^{\varepsilon_1}_{k}(\Lambda)\oplus LCH^{\varepsilon_2}_{k}(\Lambda)\oplus
  LCH_k^{\varepsilon_1,\varepsilon_2}(\Lambda)\oplus
  H_k(C(\rho))\rightarrow\cdots\label{eq:18}
  \end{align}

 splits as $4$ exact sequences, the first two are the exact sequences for $LCH^{\varepsilon_1}$ and $LCH^{\varepsilon_2}$ and the other two lead to isomorphisms:
$0\rightarrow LCH_{\varepsilon_2,\varepsilon_1}^{n-k-1}(\Lambda)\rightarrow H_kC(\rho) \rightarrow 0$ and $0\rightarrow H^{n-k-1}(C(\rho^*))\rightarrow LCH_k^{\varepsilon_1,\varepsilon_2}(\Lambda)\rightarrow 0$. This completes the proof.
\end{proof}

A difference with the duality exact sequence lies in the fact that since $\varepsilon_0$ is a priori different from $\varepsilon_1$ the manifold class (as in \cite{Duality_EkholmetAl}) is not well understood, this is due to the fact that one cannot describe the fundamental class (see Example \ref{sec:trefoil-knot}).

\section{Perspectives}
\label{sec:perspective}

\subsection{Lagrangian fillings}
In \cite{Ekholm_FloerlagCOnt}, Ekholm developed a Lagrangian intersection Floer theory for exact Lagrangian fillings related to the construction of the rational relative symplectic field theory of \cite{RSFT}. This construction is related to the wrapped Fukaya category of \cite{WrappedFuk}.  The convex part of this theory is the Lagrangian intersection theory of the Lagrangian fillings in the classical sense. 

We recall the definition of Lagrangian filling which is a special case of Lagrangian cobordism as in \cite{chantraine_conc}.

 \label{sec:geom-interpr-footn}
\begin{defn}
  \label{def:lagrangianfilling}
A Lagrangian filling of a Legendrian submanifold $\Lambda$ of a contact manifold $(Y,\xi)$ in $\mathbb{R}\times Y$ is a compact Lagrangian submanifold $L$ of $(-\infty,0)\times Y$ such that
\begin{enumerate}
\item[(i)] $\partial L=\Lambda$,
\item[(ii)] $L \cup (\mathbb{R_+}\times \Lambda)$ is a Lagrangian submanifold of $\mathbb{R}\times Y$.
\end{enumerate}
\end{defn}

In \cite{Kalman_&_Lagrangian_Cobordism} and \cite{Ekholm_FloerlagCOnt}, it is shown that an exact Lagrangian filling with vanishing Maslov class of a Legendrian knot leads to an augmentation of its Chekanov algebra. We denote the augmentation induced by $L$ by $\varepsilon_L$. A consequence of the results of \cite{Ekholm_FloerlagCOnt} is the following isomorphism:

$$LCH_{\varepsilon_L}^{n-k+2}(\Lambda)\simeq H_{k}(L).$$

The proof relies on the existence of an exact triangle involving $LCH^{\varepsilon_L}(\Lambda)$, $HF(L)$ and the full Lagrangian Floer homology of $L$. 

Similarly to the case of the wrapped Floer homology, in jet space the full Floer homology vanishes, so that $LCH^{\varepsilon_L}(\Lambda)$ and $HF(L)$  are isomorphic. Standard arguments then show that $HF(L)\simeq H(L)$. In the context of bilinearised Legendrian contact homology, only the latter isomorphism fails to be true as two different Lagrangian fillings might not be Hamiltonian isotopic. Namely from \cite[Theorem 4.9]{Ekholm_FloerlagCOnt} we have: 

\begin{Thm}
  \label{thm:Lagrangianfilling}
Let $L_1$ and $L_2$ be two exact Lagrangian filling of $\Lambda$ with vanishing Maslov class inducing augmentations $\varepsilon_{L_1}$ and $\varepsilon_{L_2}$ of $\mathcal{A}(\Lambda)$. Then
$$LCH^{n-k+2}_{\varepsilon_{L_1},\varepsilon_{L_2}}(\Lambda)\simeq HF^{k}(L_1,L_2).$$
\end{Thm}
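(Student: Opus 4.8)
The plan is to reduce the statement to the case of a single filling already handled in \cite{Ekholm_FloerlagCOnt}, exploiting the fact that bilinearised Legendrian contact homology of $\Lambda$ with respect to $(\varepsilon_{L_1},\varepsilon_{L_2})$ is, by the discussion at the end of Section \ref{sec:differential}, the Legendrian contact cohomology of the $2$-copy link $\overline{\Lambda}_2$ linearised with respect to the pair augmentation $\varepsilon_E$ with $E=(\varepsilon_{L_1},\varepsilon_{L_2})$ (taking the subcomplex $C^{2,1}$, i.e. the chords running from $\Lambda_1$ to $\Lambda_2$). First I would observe that the disjoint union $L_1\sqcup L_2$, suitably translated in the symplectisation direction, is an exact Lagrangian filling with vanishing Maslov class of $\overline{\Lambda}_2$, and that the augmentation it induces on $\mathcal{A}(\overline{\Lambda}_2)$ is exactly $\varepsilon_E$: pure chords of $\Lambda_i$ are augmented by $\varepsilon_{L_i}$ since a curve with boundary on $L_1\sqcup L_2$ asymptotic to a pure chord of $\Lambda_i$ must have all of its boundary on $L_i$, while mixed chords are sent to $0$ because no rigid curve on the disconnected filling can have a single mixed asymptotic and no others. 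This is where one uses the hypothesis that $\overline{\Lambda}_2$ is a \emph{link} of two well-separated copies.

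Next I would invoke \cite[Theorem 4.9]{Ekholm_FloerlagCOnt} for the connected sum situation: Ekholm's theorem produces, for a (possibly disconnected) exact filling $L$ with vanishing Maslov class, an isomorphism between the linearised Legendrian contact cohomology of the boundary with respect to $\varepsilon_L$ and the Lagrangian Floer cohomology $HF^*(L)$ wrapped appropriately; in jet space the full Floer homology vanishes (as recalled in the text above for the single-component case), so one is left with the ``small'' Floer complex. Applying this to $L=L_1\sqcup L_2\hookrightarrow \mathbb{R}\times\mathcal{J}^1(M)$, the self-Floer complex $CF^*(L_1\sqcup L_2)$ decomposes along the four pairs of components as $CF^*(L_1)\oplus CF^*(L_2)\oplus CF^*(L_1,L_2)\oplus CF^*(L_2,L_1)$, matching the decomposition $C^{1,1}\oplus C^{2,2}\oplus C^{2,1}\oplus C^{1,2}$ (up to the continuum summand $C^{1,2}(f)$, which contributes the Morse complex of $f_2$ and is absorbed into the $HF(L_1,L_2)$ factor exactly as in the single-filling case where $HF(L)\simeq H(L)$). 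Tracking the grading shifts through Ekholm's conventions — the degree convention is responsible for the $n-k+2$ appearing on the left and the passage from homological to cohomological indexing — the $C^{2,1}$ summand is identified with $HF^k(L_1,L_2)$, which is precisely the asserted isomorphism.

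Concretely, the steps in order are: (1) build the exact filling $L_1\sqcup L_2$ of $\overline{\Lambda}_2$ and check exactness and the Maslov condition; (2) identify $\varepsilon_{L_1\sqcup L_2}=\varepsilon_E$ by the asymptotic/connectedness argument above; (3) apply the results of \cite{Ekholm_FloerlagCOnt} to this filling and use vanishing of the full Floer homology in jet spaces to reduce to the small Floer complex; (4) decompose both the linearised $LCH^*(\overline{\Lambda}_2)$ and $HF^*(L_1\sqcup L_2)$ along pairs of components, match the $C^{2,1}$/$(L_1,L_2)$ pieces using Theorem \ref{thm:dn=d12}, and conclude after bookkeeping the degree shift. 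The main obstacle is step (3)–(4): one must verify that Ekholm's machinery in \cite{Ekholm_FloerlagCOnt}, which is stated for connected fillings of knots, applies verbatim to the disconnected filling $L_1\sqcup L_2$ of the link $\overline{\Lambda}_2$, and that the wrapping/acceleration data needed to kill the full Floer homology can be chosen compatibly with the splitting into components — equivalently, that the moduli spaces defining the isomorphism respect the component labels, which is again a connectedness-of-curves argument but must be checked against Ekholm's specific setup. The grading normalisation (the shift by $+1$ between reduced and absolute grading, and the Poincaré-type duality built into Ekholm's isomorphism) is the other place where care is required to land on the exponent $n-k+2$ rather than a neighbouring value.
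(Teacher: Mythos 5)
There is a genuine gap at step (1), and it is fatal to the whole strategy. Two exact Lagrangian fillings $L_1$ and $L_2$ of the same Legendrian $\Lambda$ cannot in general be made disjoint: pushing one of them off in the Reeb direction (so that its boundary becomes the second component of $\overline{\Lambda}_2$) separates the cylindrical ends but not the compact interiors, and after a generic perturbation $L_1$ and $L_2$ meet transversely in finitely many points. Those intersection points are precisely the generators of the Floer complex computing $HF^*(L_1,L_2)$, i.e.\ the right-hand side of the theorem. So the object ``$L_1\sqcup L_2$'' you want to feed into the single-filling result is only an \emph{immersed} exact Lagrangian with double points, not an embedded filling of $\overline{\Lambda}_2$, and it does not induce an augmentation of $\mathcal{A}(\overline{\Lambda}_2)$ by the usual count of rigid disks. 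One can see that the approach must fail even without inspecting moduli spaces: if an embedded disconnected filling existed, your own connectedness argument would show that the induced augmentation kills all mixed chords, the single-filling isomorphism would identify $LCH^*$ of $\overline{\Lambda}_2$ with $H_*(L_1)\oplus H_*(L_2)$, and the mixed summand $C^{2,1}$ — which is exactly $LCH_{\varepsilon_{L_1},\varepsilon_{L_2}}$ by Theorem \ref{thm:dn=d12} — would be forced to vanish. This contradicts the trefoil computation of Section \ref{sec:trefoil-knot}, where $LCH^{\varepsilon_i,\varepsilon_j}(K)\simeq\mathbb{Z}_2$ for the five fillings of \cite{Kalman_&_Lagrangian_Cobordism}. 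In other words, the theorem has content precisely because $L_1$ and $L_2$ must intersect.

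For comparison, the paper does not pass through the two-copy link at all here: it applies the construction of \cite[Section 4.4]{Ekholm_FloerlagCOnt} directly to the \emph{pair} $(L_1,L_2)$, whose Floer complex is generated by both the intersection points of $L_1\cap L_2$ and the Reeb chords of $\Lambda$; the exact triangle relating the full Floer homology of the pair, $HF(L_1,L_2)$, and the Reeb-chord part (which is the bilinearised complex for $(\varepsilon_{L_1},\varepsilon_{L_2})$), together with the vanishing of the full theory in jet spaces, gives the isomorphism. The only adaptation needed is the observation that the conjectural Lemma 4.10 of \cite{Ekholm_FloerlagCOnt} — used in the single-filling case to pass from $HF(L)$ to $H(L)$ via Morse flow trees — is not needed, since the statement is made in terms of $HF^k(L_1,L_2)$ itself. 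A secondary remark: even setting aside the main gap, your step of ``absorbing the continuum summand $C^{1,2}(f)$ into $HF(L_1,L_2)$ exactly as in the single-filling case'' would reintroduce exactly the Morse-versus-holomorphic comparison that the paper is careful to avoid; the correct summand to isolate is the genuine subcomplex $C^{2,1}$, which carries the bilinearised differential with no Morse contribution.
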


The proof is exactly the same as in \cite[Section 4.4]{Ekholm_FloerlagCOnt} but we make no use of the conjectural Lemma 4.10 from there as we do not need to compare holomorphic curves with Morse gradient trajectories here.

\subsection{Relation with generating families}
\label{sec:relat-with-gener}

This project is part of an attempt in producing a unified picture of the Legendrian invariants arising from holomorphic curves on one side and generating families on the other. Recall that a generating family for a Legendrian submanifold $\Lambda$ in jet space $\mathcal{J}^1(M)$ is a function $F: M\times \mathbb{R}^m\rightarrow\mathbb{R}$ such that $\Lambda=\{(q_0,p_0,F(q_0,\eta_0))\vert \frac{\partial F}{\partial \eta}(q_0,\eta_0)=0\; p_0=\frac{\partial F}{\partial q}(q_0,\eta_0)\}$. To such a family one associates the difference function
\begin{equation*}
  \begin{array}{cccc}
\widetilde{F}: & M\times \mathbb{R}^m\times\mathbb{R}^m&\rightarrow&\mathbb{R}\\
&(q,\eta_1,\eta_2)&\mapsto&F(q,\eta_2)-F(q,\eta_1)
\end{array}
\end{equation*}
and define the \textit{generating family homology} to be $GF(\Lambda,F)=H_*(F\geq\epsilon,F=\epsilon)$. The set of those homology groups for all generating families form a Legendrian invariant bearing similarities with the linearised Legendrian contact homology. For instance, when $M=\mathbb{R}$, it has been shown in \cite{Fuchs_Rutherford_Generating_families} that $LCH^\varepsilon(\Lambda)\simeq GF(\Lambda,F^\varepsilon)$ where $F^\varepsilon$ is an equivalence class of generating family associated to $\varepsilon$ via graded ruling. It is a conjecture that this result holds in any dimension.

Bilinearised Legendrian contact homology appears naturally when trying to extend this conjectural isomorphism to a generalisation of generating family homology to an invariant communicated to us by Petya Pushkar and defined using the function:
\begin{equation*}
  \begin{array}{cccc}
\widetilde{F}: & M\times \mathbb{R}^m\times\mathbb{R}^m&\rightarrow&\mathbb{R}\\
&(q,\eta_1,\eta_2)&\mapsto&F_1(q,\eta_2)-F_2(q,\eta_1)
\end{array}
\end{equation*}

and constructing $GF(\Lambda;F_1,F_2)=H_*(\widetilde{F}\geq\epsilon,\widetilde{F}=\epsilon)$ where $F_1$ and $F_2$ are two generating families for $\Lambda$ (one assume that they have been stabilised to be defined on the same space). Conjecturally, those homologies correspond to the bilinearised Legendrian contact homology groups. More deeply, one can adapt a construction of Fukaya to construct an $\mathcal{A}_\infty$-category whose objects are generating families for $\Lambda$ and morphism spaces are the Morse complexes associated to $\widetilde{F}$. One expects the existence of an $\mathcal{A}_\infty$-functor from this category to $\mathit{Aug}(\Lambda)$. It is not expected that this functor is a quasi-equivalence, we however expect it to be cohomologically full and faithful and that it induces an equivalence of categories for the derived categories.

Note that the considerations in Section \ref{sec:geom-interpr-footn} also imply the existence of a cohomologically full and faithful functor from a Fukaya type category constructed using a Lagrangian filling of $\Lambda$ to $\mathit{Aug}(\Lambda)$ we similarely conjecture that this induce quasi-equivalence of the derived categories.

\section{Examples}\label{sec:exemple}
In this section we will show how bilinearised contact homology allows to distinguish augmentations of some DGAs. 

\paragraph{Trefoil knot.}
\label{sec:trefoil-knot}

We start with the example of the maximal Thurston-Bennequin right handed trefoil knot. It has $5$ augmentations all of which lead to isomorphic linearised contact homologies. However we will see that in the bilinearised contact homology table non diagonal terms are not isomorphic to any diagonal term. As a consequence, all those five augmentations are pairwise non homotopic. Moreover, all five augmentations arise from Lagrangian fillings of the trefoil knot. Thus the computation implies that those Lagrangian fillings are pairwise non symplectically equivalent (a result claimed by Ekholm, Honda and K\`alm\`an in \cite{Kalman_&_Lagrangian_Cobordism}).

\begin{figure}[ht!]
\labellist
\small\hair 2pt
\pinlabel {$b_1$} [bl] at 50 260
\pinlabel {$b_2$} [bl] at 230 260
\pinlabel {$b_3$} [bl] at 415 260
\pinlabel {$a_1$} [bl] at 575 350
\pinlabel {$a_2$} [bl] at 575 140
\endlabellist
  \centering
  \includegraphics[height=5cm]{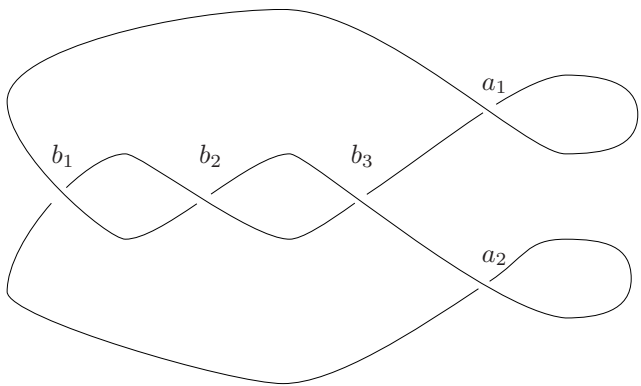}
  \caption{Right handed maximal $tb$ trefoil knot.}
  \label{fig:trefoil}
\end{figure}

The Chekanov algebra of $K$ has $5$ generators. Two of degree $1$: $a_1$ and $a_2$, and three of degree $0$: $b_1$, $b_2$ and $b_3$. The differential is given by:
\begin{align*}
\partial a_1=1+b_1+b_3+b_1b_2b_3,\\
\partial a_2=1+b_1+b_3+b_3b_2b_1.
\end{align*}

This DGA admits five augmentations listed in Table \ref{tab:Augment23}. 
\begin{table}
  \centering
  \begin{tabular}{c|ccc}
    & $b_1$ & $b_2$ & $b_3$ \\
    \hline
$\varepsilon_1$ &$1$ &$1$  &$1$ \\
$\varepsilon_2$ & $1$ &$0$ &$0$ \\
$\varepsilon_3$ & $1$&$1$ & $0$\\
$\varepsilon_4$ & $0$ & $0$&$1$ \\
$\varepsilon_5$ & $0$ &$1$ &$1$ 
  \end{tabular}
  \caption{Augmentations of the right handed maximal tb trefoil knot}
  \label{tab:Augment23}
\end{table}
 
An easy computation shows that $LCH^{\varepsilon_i,\varepsilon_j}(K)=\mathbb{Z}_2[0]$ as soon as $i\not= j$ and that $LCH^{\varepsilon_i,\varepsilon_i}(K)=\mathbb{Z}_2[1]\oplus \mathbb{Z}_2^2[0]$. These $5$ augmentations are thus all pairwise non-equivalent. As it is shown in \cite{Kalman_&_Lagrangian_Cobordism}, those $5$ augmentations arise in this context from $5$ different Lagrangian fillings of $K$ (one gets them by resolving the crossing $b_i$ in various orders). It follows from our computation and Theorem \ref{thm:Lagrangianfilling} that those Lagrangian fillings are non Hamiltonian isotopic. Note that it follows from \cite[Lemma 3.13]{Kalman_&_Lagrangian_Cobordism} that isotopic exact Lagrangian fillings lead to equivalent augmentations (the map $T^d$ is defined counting degree $-1$ holomorphic curves along a generic isotopy $\Sigma_t$).

\begin{figure}[ht!]
\labellist
\small\hair 2pt
\pinlabel {$m_1$} [br] at 40 430
\pinlabel {$m_2$} [br] at 40 205
\pinlabel {$b^1_{21}$} [br] at 70 315 
\pinlabel {$b^1_{11}$} [tl] at 100 290 
\pinlabel {$b^1_{12}$} [bl] at 130 345
\pinlabel{$b^1_{22}$} [bl] at  100 375
\pinlabel {$b^2_{21}$} [bl] at 230 330 
\pinlabel {$b^2_{11}$} [tl] at  270 285
\pinlabel {$b^2_{12}$} [bl] at 310 335
\pinlabel{$b^2_{22}$} [bl] at  275 360 
\pinlabel{$b^3_{21}$} [bl] at  425 330
\pinlabel {$b^3_{11}$} [tl] at 460 285
\pinlabel {$b^3_{22}$} [bl] at 460 365
\pinlabel {$b^3_{12}$} [bl] at 500 335
\pinlabel {$a^1_{21}$} [bl] at 535 420
\pinlabel {$a^1_{11}$} [bl] at 575 390
\pinlabel {$a^1_{12}$} [bl] at 660 440
\pinlabel {$a^1_{22}$} [bl] at 620 470
\pinlabel {$a^2_{21}$} [bl] at 535 205
\pinlabel {$a^2_{11}$} [bl] at 575 180
\pinlabel {$a^2_{12}$} [bl] at 655 240
\pinlabel {$a^2_{22}$} [bl] at 615 255
\pinlabel {$M_1$} [bl] at 775 445
\pinlabel {$M_2$} [bl] at 770 240
\endlabellist
  \centering
  \includegraphics[height=8cm]{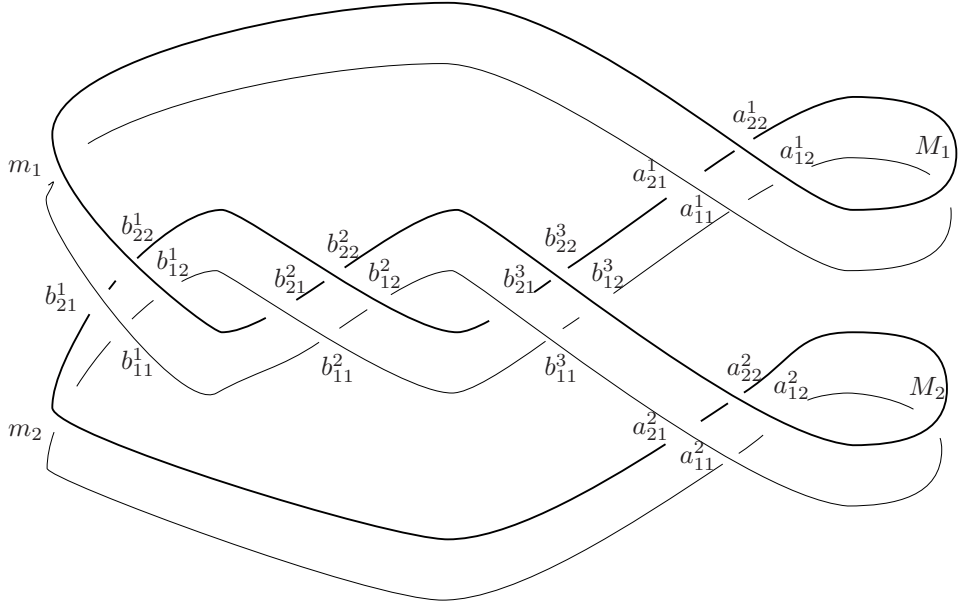}
  \caption{Two copies of the maximal $tb$ trefoil knot.}
  \label{fig:trefoil2copy}
\end{figure}

We compute also the maps of the exact sequence of Section \ref{sec:prod-struct-adn} in order to show that there is no fundamental class in the context of bilinearised Legendrian contact homology. To get the explicit maps one needs to consider the differential of the $2$-copy trefoil knot of Figure \ref{fig:trefoil2copy} (here the Morse function on $S^1$ has two minima $m_1$ and $m_2$ and two maxima $M_1$ and $M_2$). Specifically one needs to understand the map $\rho: C^{21}\rightarrow C^f$ of Section \ref{sec:structure-delta_n} arising while computing the differential $\partial$ on generators of type $\gamma_{12}$. From Figure \ref{fig:trefoil2copy} we get:
\begin{align*}
    \partial(a^1_{12})&=M_1+m_1a^1_{11}+(\cdots),\\
\partial(a^2_{12})&=M_2+a^2_{22}m_2+(\cdots),\\
\partial(b^1_{12})&=m_1b^1_{11}+b^1_{22}m_2,\\
\partial(b^2_{12})&=m_1b^2_{11}+b^2_{22}m_2,\\
\partial(b^3_{12})&=m_1b^3_{11}+b^3_{22}m_2.
\end{align*}

where the terms $(\cdots)$ do not involve any continuum chords and are characterised by Theorem~\ref{thm:equiv}.

This is enough to compute explicitly the long exact sequence. For $\varepsilon_1$ and $\varepsilon_2$ we get that
\begin{align*}
  \rho^{\varepsilon_1,\varepsilon_2}(a_1)&=0,\\
\rho^{\varepsilon_1,\varepsilon_2}(a_2)&=0,\\
\rho^{\varepsilon_1,\varepsilon_2}(b_1)&=m_1+m_2,\\
\rho^{\varepsilon_1,\varepsilon_2}(b_2)&=m_1,\\
\rho^{\varepsilon_1,\varepsilon_2}(b_3)&=m_1.
\end{align*}

In this situation, $LCH_0^{\varepsilon_1,\varepsilon_2}$ is generated by $[b_2]$. And the map $\rho^{\varepsilon_1,\varepsilon_2}$ is injective. Similar considerations imply that the map $\sigma:H_1(S^1)\rightarrow LCH^0_{\varepsilon_2,\varepsilon_1}$ is surjective. The map from $LCH^0_{\varepsilon_2,\varepsilon_1}\rightarrow LCH_0^{\varepsilon_1,\varepsilon_2}$ is thus $0$.

\paragraph{Chekanov-Eliashberg knot.}
Consider the Legendrian knot of Figure \ref{fig:chekeliash}. 

\label{sec:chek-eliashb-knot}
\begin{figure}[ht!]
\labellist
\small\hair 2pt
\pinlabel {$a_1$} [bl] at 34 155
\pinlabel {$a_2$} [bl] at 270 160
\pinlabel {$a_3$} [bl] at 50 31
\pinlabel {$a_4$} [bl] at 195 35
\pinlabel {$b_5$} [bl] at 157 171
\pinlabel {$b_6$} [bl] at 150 103
\pinlabel {$b_7$} [bl] at 95 80
\pinlabel {$b_8$} [bl] at 145 5
\pinlabel {$b_9$} [bl] at 225 70
\endlabellist
  \centering
  \includegraphics[height=8cm]{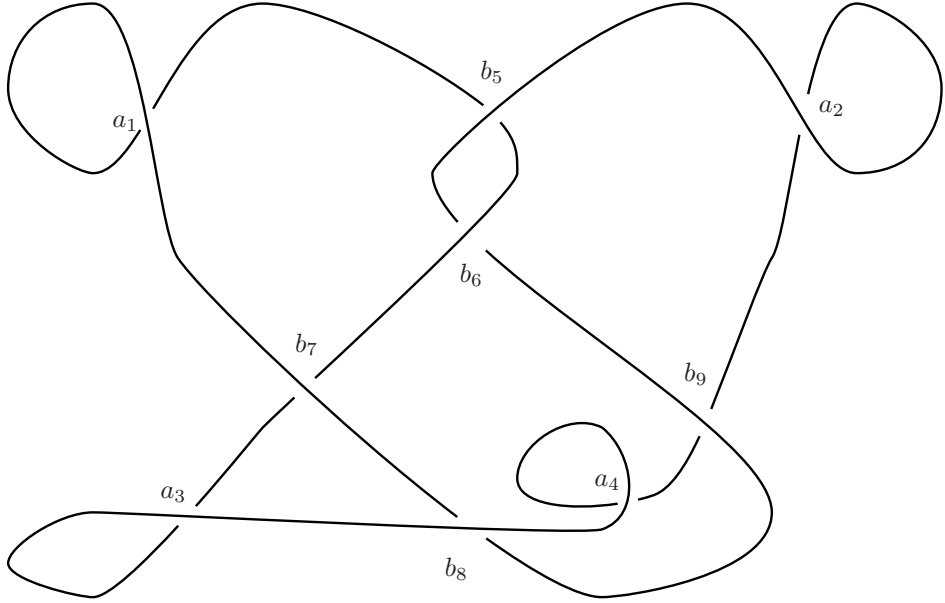}
  \caption{Chekanov-Elisahberg Legendrian knot.}
  \label{fig:chekeliash}
\end{figure}

It is one of the two Legendrian knots which have been distinguished in \cite{Chekanov_DGA_Legendrian} using Legendrian contact homology. The second one admits only one augmentation and hence has no non-trivial bilinearised Legendrian homology. However, this one admits three augmentations that are (as we will compute) non-equivalent. Again the degree of $a_i$ is $1$ and the degree of $b_i$ is $0$. The differential is given by
\begin{align*}
\partial(a_1)&=1+b_7+b_7b_6b_5+b_5+b_9b_8b_5,\\  
\partial(a_2)&=1+b_9+b_5b_6b_9,\\
\partial(a_3)&=1+b_8b_7,\\
\partial(a_4)&=1+b_8b_9.
\end{align*}
 And the augmentations are 

\begin{table}[h]
  \centering
  \begin{tabular}{c|ccccc}
    & $b_5$ & $b_6$ & $b_7$ &$b_8$ & $b_9$\\
    \hline
$\varepsilon_1$ &$0$ &$0$  &$1$ &$1$&$1$\\
$\varepsilon_2$ & $0$ &$1$ &$1$ &$1$&$1$ \\
$\varepsilon_3$ & $1$&$0$ & $1$ &$1$&$1$\\
  \end{tabular}
  \caption{Augmentations of the Chekanov-Eliashberg knot.}
  \label{tab:Augmentchekel}
\end{table}

For all of these augmentations, the linearised Legendrian contact homologies coincide 
and are given by $LCH^{\varepsilon_i}(K)\simeq \mathbb{Z}_2[1]\oplus\mathbb{Z}_2^2[0]$,
$i=1,2,3$.

However for any choice of a pair $\varepsilon_i\not=\varepsilon_j$ of augmentations, we get $LCH^{\varepsilon_i,\varepsilon_j}(K)\simeq \mathbb{Z}_2[0]$ and we deduce that these three augmentations are pairwise non-equivalent. 

As the second Chekanov-Eliashberg example has only one augmentation, we get that the set of equivalence classes of augmentations for those knots are different.

\bibliographystyle{plain}
 \bibliography{Bibliographie_en}

\textsc{D\'epartement de Math\'ematiques, Universit\'e Libre de Bruxelles, CP 218, Boulevard du Triomphe, B-1050 Bruxelles, Belgium}\\
\textit{Email address:}\textbf{fbourgeo@ulb.ac.be}

\textsc{Laboratoire de Math\'ematiques Jean Leray, BP 92208, 2 Rue de la Houssini\`ere, F-44322 Nantes Cedex 03, France}\\
\textit{Email address:}\textbf{baptiste.chantraine@univ-nantes.fr}

 \end{document}